\documentclass[12pt,a4paper]{article}
  
\usepackage{amsmath,amssymb} 
\usepackage{graphicx,color} 
\usepackage{euscript,upref} 
\usepackage{url} 
  
\textheight 244mm
\textwidth 170mm
\topmargin  0mm
\headheight 0mm 
\headsep 0mm 
\oddsidemargin 0mm
\evensidemargin -5mm 
  
  
\newcounter{defi}
\newcounter{theo}
\newcounter{propo} 
\newcounter{examp} 

\newcounter{MatrixTheo} 
\newcounter{AbsRegTheo}
\newcounter{AbsRegDefi}
\newcounter{SplitDefi} 
\newcounter{ARMSplitTheo} 
\newcounter{ImmersDefi} 
\setcounter{defi}{1}
\setcounter{theo}{1}
\setcounter{propo}{1} 
\setcounter{examp}{1} 
  
\newcommand{\mbf}[1]{\textbf{\textit{#1}}}
 
\newcommand{\mcl}{\mathcal} 
\newcommand{\mbb}{\mathbb} 
 
\newcommand{\eus}{\EuScript} 
\newcommand{\diag}{\mathrm{diag}\,} 
\newcommand{\dist}{\mathrm{dist}\;} 
\newcommand{\Dist}{\mathrm{Dist}\,}
\newcommand{\ov}{\overline} 
\newcommand{\un}{\underline} 
\newcommand{\m}{\mathrm{mid}\;} 
 
\newcommand{\sgn}{{\mathrm{sgn}\;}} 
\newcommand{\pro}{{\mathrm{pro}\;}} 
\newcommand{\opp}{{\mathrm{opp}\;}} 
\newcommand{\inv}{{\mathrm{inv}\,}}
\newcommand{\sti}{{\mathrm{sti}\;}} 
\newcommand{\dual}{{\mathrm{dual}\;}}

\newcommand{\tl}{\widetilde{\phantom{p}}}
\newcommand{\ii}[2]{\hspace*{-\arraycolsep}
\begin{array}[t]{c}\\[-15pt]\mbox{\unitlength=1mm%
\begin{picture}(5,5)(-1,0)\linethickness{1.6pt}
\put(0,5){\line(0,-1){5}} \put(3,5){\line(0,-1){5}}
\put(0.2,0){\line(3,5){3}}\put(0.1,0){\line(3,5){3}}
\put(0,0){\line(3,5){3}}  \put(0.1,0){\line(3,5){3}}
\put(0.2,0){\line(3,5){3}}\end{picture}}^{\scriptstyle#1}\\[-7pt]
{\scriptstyle#2\hspace{1ex}}\end{array}\hspace{-0.5\arraycolsep}}
  
\renewcommand{\r}{\mathrm{rad}\;} 
\definecolor{MyGreen}{rgb}{0.1,0.7,0.2} 
\definecolor{Gray1}{rgb}{0.6,0.6,0.65}
\definecolor{Gray2}{rgb}{0.4,0.45,0.4}
\definecolor{Gray3}{rgb}{0.6,0.55,0.55}
  
\newenvironment{definition}{\addvspace{\bigskipamount} 
  \noindent{\bf Definition~\arabic{defi}}\it}{\addtocounter{defi}{1} 
  \par\addvspace{\bigskipamount}} 
\newenvironment{theorem}{\addvspace{\bigskipamount} 
  \noindent{\bf Theorem~\arabic{theo}}\it}{\addtocounter{theo}{1} 
  \par\addvspace{\bigskipamount}} 
\newenvironment{proposition}{\addvspace{\bigskipamount} 
  \noindent{\bf Proposition~\arabic{propo}}\it}{\addtocounter{propo}{1} 
  \par\addvspace{\bigskipamount}} 
\newenvironment{proof}{\addvspace{\bigskipamount} 
  \noindent{\sl Proof.}}{\hfill$\blacksquare$\par\addvspace{\bigskipamount}} 
\newenvironment{corollary}{\addvspace{\bigskipamount} 
  \noindent{\bf Corollary}}{\par\addvspace{\bigskipamount}} 
\newenvironment{example}{\addvspace{3ex}\noindent%
  {\textbf{Example~\arabic{examp}\;}}}%
  {\addtocounter{examp}{1}\hfill$\blacksquare$\par\addvspace{3ex}}
  

\title{\LARGE\bf Numerical computation \\[1mm]
                 of formal solutions to interval \\[1mm]
                 linear systems of equations }
\author{\sc Sergey P. Shary}
\date{\sl\small Institute of Computational Technologies SB RAS\\
                and Novosibirsk State University, \\
                Novosibirsk, Russia \\
                E-mail: \texttt{shary@ict.nsc.ru}}

\begin{document}
\maketitle
  
\begin{abstract} 
The work is devoted to the development of numerical methods for computing \emph{formal 
solutions} of interval systems of linear algebraic equations $\mbf{A}x = \mbf{b}$. 
These solutions are found in Kaucher interval arithmetic, which extends and completes 
the classical interval arithmetic algebraically. The need to solve these problems naturally 
arises, for example, in inner and outer estimation of various solution sets to interval 
linear systems of equations. The work develops two approaches to the construction of 
stationary iterative methods for computing the formal solutions that are based on splitting 
the matrix of the system. We consider their convergence and implementation issues, compare 
with the other approaches to computing formal solutions. 
\end{abstract}
  
  
\section{Introduction}

The main object we study in this article is interval systems of linear algebraic 
equations having the form 
\begin{equation}
\label{InLinSys1}
\arraycolsep 1mm
\left\{
\begin{array}{ccccccccc}
\mbf{a}_{11} x_1 &+& \mbf{a}_{12} x_2 &+&\ldots&+&
\mbf{a}_{1n} x_n &=& \mbf{b}_1, \\[2mm]
\mbf{a}_{21} x_1 &+& \mbf{a}_{22} x_2 &+&\ldots&+&
\mbf{a}_{2n} x_n &=& \mbf{b}_2, \\[2mm]
& \vdots &&& \ddots &&& \vdots & \\[2mm]
\mbf{a}_{n1} x_1 &+& \mbf{a}_{n2} x_2 &+&\ldots&+&
\mbf{a}_{nn} x_n &=& \mbf{b}_n, 
\end{array}
\right.
\end{equation} 
with intervals $\mbf{a}_{ij}, \mbf{b}_i$, which will be considered as elements of 
Kaucher complete interval arithmetic $\mbb{KR}$ \cite{Kaucher-80}. It is convenient 
to denote such systems as 
\begin{equation}
\label{InLinSys2}
\mbf{A}x = \mbf{b},
\end{equation}
where $\mbf{A} = (\,\mbf{a}_{ij})$ is an interval $n\times n$-matrix, $\mbf{b} = 
(\,\mbf{b}_{i})$ is an interval $n$-vector. 
  
For interval equations and systems of equations, various definitions of solutions 
and solution sets exist. Additionally, practice requires that these solution sets be 
evaluated in a wide variety of ways. As a consequence, there are many problem statements 
associated with interval equations and systems of equations. In our paper, we are going 
to concentrate on computing the so-called formal solutions to interval linear systems 
of the form \eqref{InLinSys1}--\eqref{InLinSys2}.

\begin{definition} 
An interval vector is called \textsl{formal solution} to an interval equation or a system 
of equations, if substituting it into the equation or equation system and execution 
of all operations in interval arithmetic result in a valid equality. 
\end{definition}
  
Due to a number of reasons, it is useful to find formal solutions to interval systems 
of equations in Kaucher interval arithmetic $\mbb{KR}$ rather than in classical interval 
arithmetic $\mbb{IR}$. $\mbb{KR}$ is an algebraic and order completion of $\mbb{IR}$. 
  
The problem we consider is not new. It was first noted in the paper by S.\,Berti 
\cite{Berti} with respect to one interval quadratic equation. Then H.\,Ratschek and 
W.\,Sauer \cite{RatschSau} studied such solutions for a single interval linear 
equation, and they used the term \emph{algebraic solution}. K.\,Nickel \cite{Nickel} 
considered formal solutions to interval linear systems of equations in complex interval 
arithmetics. However, formal (algebraic) solutions of interval equations and systems 
of equations have been studied only in about a dozen papers for the last decades when 
interval analysis rapidly developed. 
  
The need to consider formal solutions arises, in particular, in the interval analogue 
of the method of undetermined coefficients, as well as in modeling linear static 
systems under interval uncertainty of their parameters, as a consequence of the 
so-called \emph{formal (algebraic) approach} to their analysis \cite{Shary-95, Shary-96,
SharySurvey}. We remind that this approach replaces the original problem of estimating 
the internal states of the system to the problem of finding algebraic solutions of 
an auxiliary equation in Kaucher arithmetic $\mbb{KR}$. In this work, we do not focus 
on the derivation of Kaucher interval arithmetic and discussion of its remarkable 
properties. The interested reader should refer to the original works of 
\cite{GardTre-80, GardTre-82,Kaucher-80}, or to the summary in \cite{Kupriyanova, 
Shary-95, Shary-96,Shary-98, SharySurvey}. 
  
Everywhere below, we will denote the intervals and interval objects (vectors, matrices) 
in bold (for example, $\mbf{A}$, $\mbf{B}$, $\mbf{C}$, \ldots, $\mbf{x}$, $\mbf{y}$, 
$\mbf{z}$), $\un{\mbf{x}}$, $\ov{\mbf{x}}$ are understood as the lower (left) and upper 
(right) endpoints of an interval $\mbf{x}$, other symbols also follow the informal 
international standard \cite{INotation}. Some results of this work were previously 
published in \cite{Shary-95}. 
  
  
\section{Kaucher interval arithmetic}

The classical interval arithmetic $\mbb{IR}$ is known to be an algebraic system 
formed by intervals of the real axis $\mbb{R}$ with the operations between them 
defined ``by representatives'', i.\,e. according to the following fundamental 
principle 
\begin{equation}
\label{FundamProperty}
\mbf{a}\star\mbf{b} \  = \,\{\,a\star b \mid a\in\mbf{a},\,b\in\mbf{b}\,\}. 
\end{equation} 
The constructive reformulation of the above relation for separate arithmetic 
operations looks as follows:  
\begin{align}
& \mbf{a} + \mbf{b} = \left[\,\un{\mbf{a}} + \un{\mbf{b}},\,\ov{\mbf{a}}
 +\ov{\mbf{b}}\,\right],\label{Addition}\\[5pt]
& \mbf{a} - \mbf{b} = \left[\,\un{\mbf{a}} - \ov{\mbf{b}},\,\ov{\mbf{a}}
 - \un{\mbf{b}}\,\right],\label{Subtraction}\\[5pt]
& \mbf{a}\cdot\mbf{b} = \left[\,\min\{\un{\mbf{a}}\,\un{\mbf{b}},
 \un{\mbf{a}}\,\ov{\mbf{b}},\ov{\mbf{a}}\,\un{\mbf{b}},\ov{\mbf{a}}\,
 \ov{\mbf{b}}\}\right.,\left.\max\{\un{\mbf{a}}\,\un{\mbf{b}},
 \un{\mbf{a}}\,\ov{\mbf{b}},\ov{\mbf{a}}\,\un{\mbf{b}},\ov{\mbf{a}}\,
 \ov{\mbf{b}}\}\,\right],\label{Multiplication}\\[5pt]
& \mbf{a}/\mbf{b} = \mbf{a}\cdot\left[\,1/\ov{\mbf{b}},\,1/\un{\mbf{b}}
 \,\right]\qquad\mbox{ for } \ \mbf{b}\not\ni 0.\label{Division}
\end{align}
  
The algebraic properties of the classical interval arithmetic $\mbb{IR}$ are much more 
poor than those of the usual number systems, the ring $\mbb{Z}$ of integers, the fields 
of rational numbers $\mbb{Q}$ and real numbers $\mbb{R}$, because 
\begin{itemize}
\item 
all intervals with nonzero width, i.\,e. most of the elements of $\mbb{IR}$ do not have 
inverse elements with respect to the operations \eqref{Addition}--\eqref{Division},
\item 
the arithmetic operations \eqref{Addition}--\eqref{Division} are related to each other 
by a weak sub-distributivity relation (see \cite{MooreBakerCloud,Neumaier,SharyBook}), 
and the full distributivity of multiplication (and division) with respect to addition 
and subtraction does not take place. 
\end{itemize} 
As a consequence, first, in $\mbb{IR}$ elementary equations with respect to the unknown 
variable $x$
\begin{equation*}
\label{SimpleEqs}
\mbf{a} + x = \mbf{b},\hspace{34mm}\mbf{a}\cdot x = \mbf{b}
\end{equation*}
and their like do not always have solutions. Secondly, the technique of symbolic 
transformations in the classical interval arithmetic $\mbb{IR}$ is quite poor. 
We cannot even transfer terms from one part of the equation to another, and the lack 
of distributivity makes it impossible to reduce such terms. 
  
In addition, the ordinal properties of the classical interval arithmetic with respect 
to inclusion ordering ``$\subseteq$'' are unsatisfactory. In partially ordered sets, 
the possibility of taking, for any two elements, their lower bound ``$\wedge$'' and 
the upper bound ``$\vee$'' with respect to the order in question plays a huge role. 
In $\mbb{IR}$, the corresponding operations are 
\begin{align}
\mbf{a}\wedge\mbf{b} \ :=& \ \inf{}_\subseteq \{\,\mbf{a},\mbf{b}\,\}
= \left[\,\max\{\un{\mbf{a}},\un{\mbf{b}}\},\,\min\{\ov{\mbf{a}},
\ov{\mbf{b}}\}\,\right],\hspace*{7mm}\label{Infimum}\\[1mm]
&\makebox[5mm][l]{ --- taking the lower bound %
   with respect to ``$\subseteq$'',}             \notag\\[4mm]
\mbf{a}\vee\mbf{b} \ :=& \ \sup{}_\subseteq \{\,\mbf{a},\mbf{b}\,\}
= \left[\,\min\{\un{\mbf{a}},\un{\mbf{b}}\},\,\max\{\ov{\mbf{a}},
\ov{\mbf{b}}\}\,\right],\hspace*{7mm}\label{Supremum}\\[1mm]
&\makebox[5mm][l]{ --- taking the upper bound %
   with respect to ``$\subseteq$''.}             \notag
\end{align}
Since the first of these operations is not always fulfilled, then $\mbb{IR}$ is, in a sense, 
``not closed'' with respect to the inclusion order.\footnote{If $\mbf{a}, \mbf{b}$ are usual 
one-dimensional intervals with non-empty intersection, then $\mbf{a}\wedge\mbf{b}$ and 
$\mbf{a}\vee\mbf{b}$ coincide with $\mbf{a}\cap\mbf{b}$ and $\mbf{a}\cup\mbf{b}$ respectively. 
But this is not valid in general.}  For example, $[1, 2]\wedge[3, 4]$ is not defined. 
  
The elements of the complete interval arithmetic $\mbb{KR}$ are pairs of real numbers 
$[\,\eta,\,\vartheta\,]$, not necessarily connected by the relation $\,\eta\leq\vartheta$. 
Thus, $\mbb{KR}$ is obtained by appending \emph{improper} intervals $[\,\eta,\vartheta\,]$, 
$\eta > \vartheta$ to the set $\mbb{IR} = \{\,[\,\eta, \vartheta\,] \mid \eta,\,\vartheta 
\in\mbb{R},\;\eta\leq\vartheta\,\}$ of \emph{proper} intervals and real numbers (identified 
with degenerate intervals of zero width). The elements of Kaucher arithmetic and the more 
complex objects formed from them (vectors, matrices) will be highlighted in bold, similar 
to usual intervals. Moreover, if $\mbf{a} = [\,\eta, \,\vartheta\,]$, then $\eta$ is 
called the \emph{left} (or \emph{lower}) end of the interval $\mbf{a}$ and is denoted 
by $\un{\mbf{a}}$ or $\inf\mbf{a}$, and $\vartheta$ is called the \emph{right} (or 
\emph{upper}) endpoint of the interval $\mbf{a}$ and is denoted $\ov{\mbf{a}}$ or 
$\sup{a}$. The interval $\mbf{a}$ is called \emph{balanced} if $\un{\mbf{a}} \ = \ 
-\ov{\mbf{a}}$. 
  
Without trying to replace the rigorous mathematical constructions performed by 
E.\,Kaucher, we will consider the description of the arithmetic $\mbb{KR}$ along 
with informal motivations that help to understand the meaning of various constructions. 
  
Proper and improper intervals, the two halves of $\mbb{KR}$, transform into each 
other as a result of the dualization mapping $\,\dual: \mbb{KR}\to\mbb{KR}$ swapping 
(turning over) the endpoints of the interval, i.\,e. such that 
\begin{equation*}
\label{Dualization}
\dual\mbf{a} := [\;\ov{\mbf{a}},\,\un{\mbf{a}}\;].
\end{equation*}
\emph{Proper projection} of an interval $\mbf{a}$ is the value 
\begin{equation*}
\label{ProperProject}
\pro\mbf{a} := 
\begin{cases}
\ \mbf{a}, &\mbox{ if $\mbf{a}$ is proper, }\\[1mm]
\ \dual\mbf{a} , &\mbox{ otherwise. } 
\end{cases}
\end{equation*}
  
Similar to the classical interval arithmetic $\mbb{IR}$, the ``inclusion'' of one 
interval to another is defined in $\mbb{KR}$ as follows: 
\begin{equation}
\label{Order}
\mbf{a}\subseteq\mbf{b}
\quad\Longleftrightarrow\quad
\un{\mbf{a}}\geq\un{\mbf{b}}
\quad\mbox{ and }\quad
\ov{\mbf{a}}\leq\ov{\mbf{b}}.
\end{equation} 
For example, $[3, 1]\subseteq[2, 2] = 2\in\mbb{R}$. As a consequence, the operations 
of taking the minimum \eqref{Infimum} and the maximum \eqref{Supremum} keep their 
definitions unchanged in $\mbb{KR}$, but now they are always possible due to 
the presence of improper intervals. In particular, $[1, 2]\wedge[3, 4] = [3, 2]$. 
Thus, the extension of $\mbb{IR}$ to $\mbb{KR}$ makes the set of intervals a lattice, 
and even a conditionally complete lattice with respect to the inclusion ordering 
\eqref{Order}.\footnote{A conditionally complete lattice is a partially ordered set 
in which every non-empty bounded subset has exact upper and lower bounds \cite{Birkhoff}. 
This is more than just a lattice, but less than a full lattice, where minima and maxima 
can be taken for any families of elements.} 
  
In addition to the set-theoretic inclusion on the set of intervals $\mbb{KR}$, there 
is another partial ordering, which naturally generalizes the linear order ``$\leq$'' 
on the real axis: 
  
\begin{definition} 
For the intervals $\mbf{a}$, $\mbf{b}\in\mbb{KR}$, we say that \textsl{$\mbf{a}$ 
does not exceed $\mbf{b}$} and write ``$\,\mbf{a}\leq\mbf{b}$'' if and only if 
$\,\un{\mbf{a}}\leq\un{\mbf{b}}\,$ and $\,\ov{\mbf{a}}\leq\ov{\mbf{b}}$. 
\par\noindent 
The interval is called \textsl{nonnegative}, i.\,e. ``$\,\geq 0$'' if both 
its endpoints are nonnegative. The interval is \textsl{nonpositive}, i.\,e. 
``$\,\leq 0$'' if both its endpoints are not positive. 
\end{definition} 
  
For example, $[1, 2]\leq[3, 2]$, and both compared intervals $[1, 2]$ and $[3, 2]$ 
are non-negative. 
  
It is useful to introduce the concept of the \emph{sign of an interval}, which 
we define as 
\begin{equation*}
\sgn\mbf{a} =
\left\{ \
\begin{array}{cl}
+ \ , & \mbox{ if } \;\mbf{a}\geq 0,\\[1mm]
- \ , & \mbox{ if } \;\mbf{a}\leq 0,\\[1mm]
\mbox{undefined}, & \mbox{ if $\,0\,$ is in the interior of $\pro\mbf{a}$}. 
\end{array}
\right.
\end{equation*}
The zero, i.\,e. the zero interval $[0,0]$, may have any sign. 
  
The semigroup of all proper intervals with the operation of addition is fairly simple: 
the addition of intervals is divided into independent operations of addition of the 
left and right endpoints of the operands. As a consequence, the extension of addition 
from $\mbb{IR}$ to $\mbb{KR}$ is easy, and it is defined in $\mbb{KR}$ in exactly 
the same way as in classical interval arithmetic: 
\begin{equation*}
\mbf{a} + \mbf{b} \  := \  \left[\,\un{\mbf{a}}+\un{\mbf{b}},
  \ov{\mbf{a}}+\ov{\mbf{b}}\,\right].
\end{equation*} 
But now it follows from the existence of improper intervals that each element 
$\mbf{a}$ of $\mbb{KR}$ has a unique inverse with respect to addition (also called 
\emph{opposite}), denoted by ``$\opp\mbf{a}$'', and the equality $\,\mbf{a} + 
\opp\mbf{a} = 0\,$ implies that 
\begin{equation} 
\label{Opposite} 
\opp\mbf{a} := [\,-\un{\mbf{a}},-\ov{\mbf{a}}\,]. 
\end{equation} 
With respect to addition, the arithmetic $\mbb{KR}$ is thus a commutative group that 
is isomorphic to the additive group of the standard linear space $\mbb{R}^2$. For 
brevity, we denote by ``$\ominus$'' an operation that is the inverse of addition, 
and it will be called \emph{internal subtraction} in $\mbb{KR}$ (or \emph{algebraic 
subtraction}). Then 
\begin{equation*}
\label{Ominus}
\mbf{a}\ominus\mbf{b}
\ := \ \mbf{a} + \opp\mbf{b}
\ = \ \left[\;\un{\mbf{a}} - \un{\mbf{b}},
 \,\ov{\mbf{a}} - \ov{\mbf{b}}\;\right].
\end{equation*}
  
It is easy to verify that, for the addition in $\mbb{KR}$, the following relation 
is valid 
\begin{equation} 
\label{AddDual} 
\dual\!(\mbf{a} + \mbf{b}) = \dual\mbf{a} + \dual\mbf{b}. 
\end{equation} 
It means, in particular, that the addition of improper intervals in $\mbb{KR}$ is 
a ``mirror image'' of the addition for proper intervals in $\mbb{IR}$. This is consistent 
with the status of improper intervals, so it seems necessary to preserve this property 
when defining other arithmetic operations in $\mbb{KR}$. 
  
We start extending the multiplication to the complete interval arithmetic $\mbb{KR}$ 
with the simplest situation, i.\,e. multiplication by real numbers. It is natural 
to define it in exactly the same way as in classical interval arithmetic: 
\begin{equation} 
\label{ScalarMult} 
\mu\cdot\mbf{a} \ 
:= \ 
\left\{\ 
\begin{array}{ll} 
[\,\mu\,\un{\mbf{a}},\mu\,\ov{\mbf{a}}\,], 
&\mbox{ if }\;\mu\geq 0, \\[1mm] 
[\,\mu\,\ov{\mbf{a}},\mu\,\un{\mbf{a}}\,], 
&\mbox{ otherwise.} 
\end{array}
\right. 
\end{equation} 
  
Further, for nonnegative proper intervals, multiplication in $\mbb{IR}$ is performed 
``through the endpoints'', quite similar to addition. Therefore, it makes sense to define, 
in $\mbb{KR}$, the multiplication of non-negative intervals, which are not necessarily 
proper, according to the same ``endpoint formulas'', i.\,e. as $\mbf{a}\cdot\mbf{b} = 
[\,\un{\mbf{a}}\,\un{\mbf{b}}, \ov{\mbf{a}}\,\ov{\mbf{b}}\,]$. In fact, in this way, 
we embed a multiplicative semigroup of positive intervals into a group. If, in the 
product $\mbf{a}\cdot\mbf{b}$, one of the factors is a nonpositive interval, then we 
can make it non-negative through multiplying by $(-1)$ and using formula 
\eqref{ScalarMult}, thus coming to the previous case. 
  
Note that in all these cases the property analogous to \eqref{AddDual} holds: 
\begin{equation*}
\dual\!(\mbf{a}\cdot\mbf{b}) = \dual\mbf{a}\cdot\dual\mbf{b}. 
\end{equation*}
  
How to extend the definition of multiplication to the entire set $\mbb{KR}$? 
The ability of algebra to do this has already been exhausted, and we need to involve 
considerations concerning the inclusion ordering in $\mbb{KR}$ and the related 
properties of arithmetic operations. 
  
Using the maximum with respect to inclusion \eqref{Supremum}, the fundamental property 
\eqref{FundamProperty}, which defines the operations of classical interval arithmetic, 
can be rewritten in the following equivalent form: 
\begin{align}
\mbf{a}\star\mbf{b} \    \notag 
&= \,\{\,a\star b \mid a\in\mbf{a},\,b\in\mbf{b}\,\} \\[4mm] 
&= \ \raisebox{-0.2ex}{$\Bigl[$}\;\min_{a\in\mbf{a}}\, 
   \min_{b\in\mbf{b}}\, (a\star b), \  \max_{a\in\mbf{a}}\, 
   \max_{b\in\mbf{b}}\, (a\star b) \;\raisebox{-0.2ex}{$\Bigr]$} 
 = \  \bigvee_{a\in\mbf{a}} \ \bigvee_{b\in\mbf{b}} \ (a\star b), 
 \label{FundamPropertyRwr}
\end{align}
where $\star\in\{\,+,-,\cdot\,,/\,\}$. It is easily seen that the addition extended 
to the entire set of proper and improper intervals $\mbb{KR}$, as well as the 
multiplication defined for intervals that do not contain zero and are not contained 
in zero can be represented in a similar way through the operations \eqref{Infimum} 
and \eqref{Supremum} of taking minimum and maximum with respect to inclusion. 
If both operands $\mbf{a}$ and $\mbf{b}$ are improper, then 
\begin{equation} 
\label{DualFundProp} 
\mbf{a}\star\mbf{b} \  = \ 
   \bigwedge_{a\in\pro\mbf{a}} \ \bigwedge_{b\in\pro\mbf{b}} \ (a\star b), 
\end{equation} 
where $\star\in\{+,\cdot\}$. The lower arguments of the operations ``$\wedge$'' 
should have proper projections $\pro\mbf{a}$ and $\pro\mbf{b}$, since improper 
intervals themselves are contained in points due to the definition of inclusion 
in $\mbb{KR}$. 
  
The property \eqref{DualFundProp} is evidently obtained from \eqref{FundamPropertyRwr} 
with the use of dualization. But the next properties of the operation $\star\in
\{\,+,\cdot\,\}$ are not obvious: 
\begin{equation*}
\mbf{a}\star \mbf{b} \  = \ 
   \bigvee_{a\in\mbf{a}} \ \bigwedge_{b\in\pro\mbf{b}} \ (a\star  b), 
\end{equation*}
if $\mbf{a}$ is proper and $\mbf{b}$ is improper, and 
\begin{equation*}
\mbf{a}\star \mbf{b} \  = \ 
   \bigwedge_{a\in\pro\mbf{a}} \ \bigvee_{b\in\mbf{b}} \ (a\star b), 
\end{equation*}
if $\mbf{a}$ is improper and $\mbf{b}$ is proper. Their validity can be verified 
by direct verification. 
  
All the above written formulas, starting with \eqref{FundamPropertyRwr}, can be 
combined into one as follows. We introduce the so-called conditional operation of 
taking the extremum with respect to inclusion: 
\begin{equation}
\label{CondLattOper}
\ii{\mbf{a}}{x} :=
\left\{\
\begin{array}{cl}
\  \displaystyle\bigvee_{x\in\mbf{a}},  
  & \mbox{ if $\mbf{a}$ is proper,}   \\[6mm]
\  \displaystyle\bigwedge_{x\in\dual\mbf{a}}, 
  & \mbox{ if $\mbf{a}$ is improper.}
\end{array}\right. 
\end{equation}
This is an operation that depends on the interval parameter $\mbf{a}$ standing 
as its upper index. The operation is either maximum or minimum with respect to 
the inclusion ``$\subseteq$'', depending on whether $\mbf{a}$ is proper or improper. 
This extremum is taken over all $x$ from the proper projection of the interval 
$\mbf{a}$. Note that any interval $\mbf{a}\in\mbb{KR}$ can be represented as 
\begin{equation*} 
\mbf{a} = 
\ii{\mbf{a}}{x} x 
\end{equation*}
(instead of $x\,$, any letter can be used in the formula). Anyway, for 
$\star\in\{+,\cdot\}$, the following relation is valid: 
\begin{equation} 
\label{LatticeRepres} 
\mbf{a}\star \mbf{b} \  = \ 
\ii{\mbf{a}}{a} \ii{\mbf{b}}{b} \  (a\star b). 
\end{equation} 
This representation, first obtained in \cite{Kaucher-73}, expresses the relationship 
between the result of the interval operation $\mbf{a}\star\mbf{b}\,$ and the results 
of the point operations $a\star b$ for $a\in\pro\mbf{a}$ and $b\in\pro\mbf{b}$. 
It can be taken as a basis for the definition of arithmetic operations in the complete 
interval arithmetic $\mbb{KR}$. 
  
It is not hard to derive, from \eqref{LatticeRepres}, the monotonicity of 
the interval arithmetic operations with respect to inclusion. 
  
In order to write out explicit formulas for multiplication in complete interval 
arithmetic, we select the following subsets in $\mbb{KR}$: 
\begin{align*}
\mcl{P} &:= \{\,\mbf{a}\in\mbb{KR} \mid(\un{\mbf{a}}\geq 0)\;\&\; 
 (\ov{\mbf{a}}\geq 0)\,\}  && \text{ --- nonnegative intervals,}\\[7pt]
\mcl{Z} &:= \{\,\mbf{a}\in\mbb{KR}\mid\un{\mbf{a}}\leq
 0\leq\ov{\mbf{a}}\,\}     && \text{ --- zero-containing intervals,}\\[7pt]
-\mcl{P} &:= \{\,\mbf{a}\in\mbb{KR}\mid-\mbf{a}\in\mcl{P}\,\} 
                           && \text{ --- nonpositive intervals,}\\[7pt]
\dual\mcl{Z} &:= \{\,\mbf{a}\in\mbb{KR} \mid\dual\mbf{a}\in\mcl{Z}\,\} 
                           && \text{ --- intervals contained in zero.}
\end{align*}
Overall, $\mbb{KR} = \mcl{P}\,\cup\,\mcl{Z}\,\cup\,(-\mcl{P})\,\cup\,(\dual\mcl{Z})$. 
Then the multiplication in Kaucher interval arithmetic can be described by 
Table~\ref{KaucherProduct} \cite{Kaucher-80}, the cells of which are obtained 
as a result of detailed writing out the particular cases of applying formula 
\eqref{LatticeRepres} and our previous results. A remarkable fact is that this 
table is the supplement of a similar table for multiplication in classical interval 
arithmetic with one more row and one more column that correspond to the case of 
operands from the set $\dual\mcl{Z}$. 
  
  
\begin{table}[htb] 
\caption{Multiplication in Kaucher complete interval arithmetic} 
\label{KaucherProduct} 
\begin{equation*} 
\def\bx{\mbf{a}} \def\by{\mbf{b}} 
\def\ux{\un{\bx}\,} \def\ovx{\ov{\bx}\,} 
\def\uy{\un{\by}} \def\ovy{\ov{\by}} 
\arraycolsep 5pt 
\renewcommand{\arraystretch}{2.3} 
\begin{array}{|l||cccc|} 
\hline 
\qquad\mbf{$\cdot$} & \by\in\mcl{P} & \by\in\mcl{Z} & \by\in-\mcl{P} & 
\by\in\dual\mcl{Z} \\[1mm]\hline\hline 
\ \bx\in\mcl{P} & \ [\,\ux\uy,\ovx\ovy\,] & [\,\ovx\uy,\ovx\ovy\,] & 
 [\,\ovx\uy,\ux\ovy\,] & [\,\ux\uy,\ux\ovy\,] \\[1mm] 
\ \bx\in\mcl{Z} & \ [\,\ux\ovy,\ovx\ovy\,] & 
 \parbox{35mm}{$[\,\min\,\{\ux\ovy,\ovx\uy\}$,\\*[3pt] %
        $\phantom{M}\max\,\{\ux\uy,\ovx\ovy\}\;]$} & [\,\ovx\uy,\ux\uy\,] & 
 \mbox{\large$0$}\\[1mm]
\ \bx\in-\mcl{P} & \ [\,\ux\ovy,\ovx\uy\,] & [\,\ux\ovy,\ux\uy\,] &
 [\,\ovx\ovy,\ux\uy\,] & [\,\ovx\ovy,\ovx\uy\,] \\[1mm]
\ \bx\in\dual\mcl{Z} & \ [\,\ux\uy,\ovx\uy\,] & \mbox{\large$0$} & 
 [\,\ovx\ovy,\ux\ovy\,] & \parbox{35mm}{$[\,\max\,\{\ux\uy,\ovx\ovy\}$,\\*[3pt]%
     $\phantom{M}\min\,\{\ux\ovy,\ovx\uy\}\;]$} \\[5mm]\hline
\end{array}
\end{equation*}
\end{table}
  
  
As we see, the multiplication in Kaucher arithmetic admits non-trivial zero divisors. 
For example, $[\,-1,\,2\,]\cdot[\,5,\,-3\,] = 0$. The interval multiplication in Kaucher 
arithmetic turns out to be commutative and associative \cite{GardTre-80, Kaucher-77, 
Kaucher-80}, but the multiplication group in $\mbb{KR}$ is formed only by intervals 
$\mbf{a}$ for which $\un{\mbf{a}}\,\ov{\mbf{a}} > 0$ (or, otherwise, $0\not\in\pro 
\mbf{a}$), because no any wider subset of $\mbb{KR}$ satisfies the so-called 
``cancellation law'' 
\begin{equation*} 
\mbf{ab} = \mbf{ac} \quad \Rightarrow \quad \mbf{b} = \mbf{c}. 
\end{equation*}
This is the algebraic condition that a semigroup can be embedded into a group. 
  
Therefore, for any interval $\mbf{a}$ of $\mbb{KR}$ that does not contain zero and 
is not contained in zero itself, there is a single inverse element with respect 
to multiplication, which we will denote by ``$\inv\mbf{a}$''. From the equality 
$\,\mbf{a}\cdot\inv\mbf{a} = 1\,$, it follows that 
\begin{equation}
\label{Inverse}
\inv\mbf{a} := [\,1/\un{\mbf{a}}, 1/\ov{\mbf{a}}\,].
\end{equation}
For brevity, we will denote the inverse operation of the multiplication, the so-called 
internal (algebraic) division in $\mbb{KR}$, by ``$\oslash$'', so that 
\begin{equation*}
\label{Oslash}
\mbf{a}\oslash\mbf{b}
\ := \ \mbf{a}\cdot\inv\mbf{b}
\ = \ \mbf{a}\cdot\left[ \ 1/\un{\mbf{b}}, \,1/\ov{\mbf{b}} \ \right]
\qquad\text{ for } \ 0\not\in\pro\mbf{b}.
\end{equation*} 
  
The above table of explicit formulas for multiplication in the complete interval 
arithmetic is convenient for computer implementation, but rather cumbersome and 
barely foreseeable. In some cases, it makes sense to resort to other formulas 
for interval multiplication in $\mbb{KR}$, which were proposed by A.V.\,Lakeyev 
in \cite{Lakeyev-95, Lakeyev-98}. Recall the following definition (see, for example, 
\cite{Birkhoff,Dieudonne}): 
  
\begin{definition}
\label{RealPosNegParts}
For a real number $a$, the values 
\begin{equation*}
a^+ := \max\{\,a, 0\,\}, \hspace{23mm} a^- := \max\{\,-a, 0\,\} 
\end{equation*}
are called \textsl{positive part} and \textsl{negative part} of $a$ respectively. 
\end{definition}

Then $\,a = a^+ - a^-$ and $|a| = a^+ + a^-$.
  
\begin{proposition} {\normalfont(Lakeyev formulas)} 
\label{Lakeyev} \ 
For any intervals $\,\mbf{a}$, $\mbf{b}\in\mbb{KR}$, there holds 
\begin{multline*} 
\mbf{a}\cdot\mbf{b} \ = \ 
\bigl[ \ 
\max\{\,\un{\mbf{a}}^+\un{\mbf{b}}^+,\,\ov{\mbf{a}}^-\ov{\mbf{b}}^- \} 
- \max\{\,\ov{\mbf{a}}^+\un{\mbf{b}}^-,\,\un{\mbf{a}}^-\ov{\mbf{b}}^+\},\\[1mm] 
\max\{\,\ov{\mbf{a}}^+\ov{\mbf{b}}^+,\,\un{\mbf{a}}^-\un{\mbf{b}}^- \} 
- \max\{\,\un{\mbf{a}}^+\ov{\mbf{b}}^-,\,\ov{\mbf{a}}^-\un{\mbf{b}}^+\} \ 
\bigr].\phantom{(1.25)} 
\end{multline*}
If one of the intervals is $\mbf{a}$, $\mbf{b}$ is proper, then 
\begin{multline} 
\label{PropInteMult} 
\mbf{a}\cdot\mbf{b} \ = \
\bigl[ \ \un{\mbf{a}}^+\un{\mbf{b}}^+ +\ov{\mbf{a}}^-\ov{\mbf{b}}^- 
- \max\{\,\ov{\mbf{a}}^+\un{\mbf{b}}^-,\,\un{\mbf{a}}^-\ov{\mbf{b}}^+\},\\[1mm] 
\ \max\{\,\ov{\mbf{a}}^+\ov{\mbf{b}}^+,\,\un{\mbf{a}}^-\un{\mbf{b}}^- \} 
- \un{\mbf{a}}^+\ov{\mbf{b}}^- -\ov{\mbf{a}}^-\un{\mbf{b}}^+ \ \bigr]. 
\end{multline}
This formula is not simplified if we additionally know that both intervals $\mbf{a}$, 
$\mbf{b}$ are proper.\par\noindent 
If one of the intervals $\mbf{a}$, $\mbf{b}$ is proper and the other is improper, 
then 
\begin{equation}
\label{MixedInteMult}
\mbf{a}\cdot\mbf{b} \ = \
\bigl[ \ \un{\mbf{a}}^{+}\un{\mbf{b}}^{+} +\ov{\mbf{a}}^{-}\ov{\mbf{b}}^{-} - 
  \ov{\mbf{a}}^{+}\un{\mbf{b}}^{-} -\un{\mbf{a}}^{-}\ov{\mbf{b}}^{+}, \ 
\ov{\mbf{a}}^{+}\ov{\mbf{b}}^{+} +\un{\mbf{a}}^{-}\un{\mbf{b}}^{-} - 
  \un{\mbf{a}}^{+}\ov{\mbf{b}}^{-} -\ov{\mbf{a}}^{-}\un{\mbf{b}}^{+} \ \bigr]. 
\end{equation}
\phantom{A}
\end{proposition}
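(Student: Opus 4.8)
The plan is to treat the first (fully general) formula as primary: I would verify it directly against the multiplication Table~\ref{KaucherProduct}, which was itself obtained from the lattice representation \eqref{LatticeRepres}, and then extract \eqref{PropInteMult} and \eqref{MixedInteMult} as purely algebraic consequences. The verification rests on the partition $\mbb{KR} = \mcl{P}\cup\mcl{Z}\cup(-\mcl{P})\cup(\dual\mcl{Z})$, on each class of which the positive and negative parts of the endpoints collapse to fixed expressions. Concretely, if $\mbf{a}\in\mcl{P}$ then $\un{\mbf{a}}^- = \ov{\mbf{a}}^- = 0$ while $\un{\mbf{a}}^+ = \un{\mbf{a}}$, $\ov{\mbf{a}}^+ = \ov{\mbf{a}}$; if $\mbf{a}\in\mcl{Z}$ then $\un{\mbf{a}}^+ = \ov{\mbf{a}}^- = 0$; on $-\mcl{P}$ the roles of the plus and minus parts are interchanged; and if $\mbf{a}\in\dual\mcl{Z}$ then $\un{\mbf{a}}^- = \ov{\mbf{a}}^+ = 0$.

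First I would run through the $4\times 4 = 16$ combinations of classes for $\mbf{a}$ and $\mbf{b}$. In each cell I substitute the simplifications above into the right-hand side of the general formula and check that the four $\max$-expressions reduce to exactly the two endpoints recorded in the corresponding cell of Table~\ref{KaucherProduct}. In the twelve ``generic'' cells several of the eight products $\un{\mbf{a}}^{+}\un{\mbf{b}}^{+},\,\ov{\mbf{a}}^{-}\ov{\mbf{b}}^{-},\,\ldots,\,\ov{\mbf{a}}^{-}\un{\mbf{b}}^{+}$ vanish, so every $\max$ degenerates to a single product and the match is immediate.

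The delicate cells are the two diagonal ones, $\mcl{Z}\times\mcl{Z}$ and $(\dual\mcl{Z})\times(\dual\mcl{Z})$, together with the annihilating pair $\mcl{Z}\times(\dual\mcl{Z})$ and $(\dual\mcl{Z})\times\mcl{Z}$. In the two diagonal cells a genuine $\max$ survives, and after extracting a sign via $\min\{u,v\} = -\max\{-u,-v\}$ it must be matched against the $\min$/$\max$ appearing in those two table entries; this is the main obstacle, not conceptually hard but requiring care to keep the sign patterns straight. In the annihilating pair every surviving product carries a factor that is zero on the relevant class, so both endpoints come out $0$, reproducing the two zero cells.

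Finally, to obtain \eqref{PropInteMult} and \eqref{MixedInteMult} I would invoke the elementary fact that for $x,y\geq 0$ one has $\max\{x,y\} = x+y$ precisely when $\min\{x,y\} = 0$. If $\mbf{a}$ is proper then $\un{\mbf{a}}^{+}\ov{\mbf{a}}^{-} = 0$, since $\un{\mbf{a}}\leq\ov{\mbf{a}}$ forbids $\un{\mbf{a}}>0$ and $\ov{\mbf{a}}<0$ simultaneously; this forces $\min\{\un{\mbf{a}}^{+}\un{\mbf{b}}^{+},\,\ov{\mbf{a}}^{-}\ov{\mbf{b}}^{-}\} = 0$ and, for the upper endpoint, $\min\{\ov{\mbf{a}}^{+}\ov{\mbf{b}}^{+},\,\un{\mbf{a}}^{-}\un{\mbf{b}}^{-}\} = 0$, so those two $\max$-terms become sums and \eqref{PropInteMult} results. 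If moreover $\mbf{b}$ is improper, then $\un{\mbf{b}}^{-}\ov{\mbf{b}}^{+} = 0$, a factor common to the min-products inside both remaining $\max$-terms, which collapses them as well and yields \eqref{MixedInteMult}.
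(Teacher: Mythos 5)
The first half of your plan is sound. The paper itself gives no proof of this proposition (it imports the formulas from Lakeyev's papers), so there is nothing to compare against; but verifying the general formula cell by cell against Table~\ref{KaucherProduct}, using the simplifications $\un{\mbf{a}}^-=\ov{\mbf{a}}^-=0$ on $\mcl{P}$, $\un{\mbf{a}}^+=\ov{\mbf{a}}^-=0$ on $\mcl{Z}$, and so on, does work: in the generic cells each $\max$ degenerates to a single product, and in the $\mcl{Z}\times\mcl{Z}$ and $(\dual\mcl{Z})\times(\dual\mcl{Z})$ cells the identity $\min\{u,v\}=-\max\{-u,-v\}$ reproduces the $\min$/$\max$ entries of the table exactly as you describe.

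The derivation of \eqref{PropInteMult}, however, contains a genuine error: you pair the wrong terms in the upper endpoint. For proper $\mbf{a}$ the vanishing product is $\un{\mbf{a}}^+\ov{\mbf{a}}^-=0$; this factor occurs in $(\un{\mbf{a}}^+\un{\mbf{b}}^+)\cdot(\ov{\mbf{a}}^-\ov{\mbf{b}}^-)$ and in $(\un{\mbf{a}}^+\ov{\mbf{b}}^-)\cdot(\ov{\mbf{a}}^-\un{\mbf{b}}^+)$, so the maxima that collapse into sums are the \emph{first} max of the lower endpoint and the \emph{second} (subtracted) max of the upper endpoint --- which is precisely what \eqref{PropInteMult} asserts. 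Your claim that $\min\{\ov{\mbf{a}}^{+}\ov{\mbf{b}}^{+},\,\un{\mbf{a}}^{-}\un{\mbf{b}}^{-}\}=0$ whenever $\mbf{a}$ is proper is false: for $\mbf{a}=[-1,2]$, $\mbf{b}=[-3,4]$ this min equals $\min\{8,3\}=3$, and following your reasoning would give the upper endpoint $8+3-0=11$ instead of the correct value $8$ (the true product being $[-6,8]$). The same mispairing leaks into your treatment of \eqref{MixedInteMult}: for improper $\mbf{b}$ the vanishing product is $\un{\mbf{b}}^-\ov{\mbf{b}}^+=0$, and it sits inside the lower endpoint's subtracted max and the upper endpoint's first max --- exactly the two that survive the proper-$\mbf{a}$ step --- so all four maxima do collapse and \eqref{MixedInteMult} comes out correctly, but only after the pairing is fixed.
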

  
The advantage of the Lakeyev formulas is their global character. They give a single and 
uniform expression for the interval product $\mbf{a}\cdot\mbf{b}$ over the entire domain 
of $\mbf{a}$ and $\mbf{b}$, whereas the representation via Table.~\ref{KaucherProduct} 
has a piecewise character. This is inconvenient in the study of interval functions 
``as a whole'', in particular, in the study of differentiability and its analogues, 
in the calculation and evaluation of generalized derivatives, etc. 
  
Subtraction and division in arithmetic $\mbb{KR}$ are defined in the same way 
as in classical interval arithmetic: 
\begin{align*}
\mbf{a} - \mbf{b} \ &:= \
 \mbf{a} + (-1)\cdot\mbf{b} = \left[\,\un{\mbf{a}}
 - \ov{\mbf{b}},\ov{\mbf{a}} - \un{\mbf{b}}\,\right], \\[2mm]
\mbf{a}\,/\,\mbf{b} \ &:= \
 \mbf{a}\cdot\left[\,1/\ov{\mbf{b}},1/\un{\mbf{b}}\,\right]
\qquad\mbox{for} \ 0\not\in\pro\mbf{b}.
\end{align*}

Similar to its classical predecessors, all operations of the complete interval 
arithmetic are \emph{inclusion monotone}, i.\,e. monotone with respect to the partial 
order \eqref{Order}: if $\mbf{a}, \mbf{a}', \mbf{b}, \mbf{b}' \in\mbb{KR}$, then
\begin{equation*}
\mbf{a}\subseteq\mbf{a}',\ \mbf{b}\subseteq\mbf{b}'
\quad\Rightarrow\quad\mbf{a}\star\mbf{b}\subseteq\mbf{a}'\star\mbf{b}'
\end{equation*}
for any arithmetic operation $\star\in\{\;+,\,-,\,\cdot\,,\,/\;\}$. This follows from 
their definition according to formula \eqref{LatticeRepres}. 
  
The relationship of addition and multiplication in Kaucher arithmetic is expressed 
by the following inclusions: 
\begin{align}
\text{ if $\mbf{a}$ is proper}, & \text{ then } 
  \ \mbf{a}\cdot(\mbf{b}+\mbf{c}) \,\subseteq\,
  \mbf{a}\cdot\mbf{b} + \mbf{a}\cdot\mbf{c}\label{Subdistr}\\[1mm]
& \text{ --- subdistributivity,}\nonumber \\[2mm]
\text{ if $\mbf{a}$ is improper}, & \text{ then } 
  \ \mbf{a}\cdot(\mbf{b}+\mbf{c}) \,\supseteq\,
  \mbf{a}\cdot\mbf{b} + \mbf{a}\cdot\mbf{c}\label{Superdistr}\\[1mm]
& \text{ --- superdistributivity.}\nonumber
\end{align}
These inclusions turn to exact equalities when, in particular, $\mbf{a}$ squeezes 
to a point, that is, $\mbf{a} = a\in\mbb{R}$:
\begin{equation}
\label{KRPointDistr}
a\cdot(\mbf{b}+\mbf{c})\,= \,a\cdot\mbf{b} + a\cdot\mbf{c},
\end{equation}
Another important case of distributivity is the case when the signs of the intervals 
$\mbf{b}$ and $\mbf{c}$ coincide with each other: 
\begin{equation}
\label{KRSignDistr}
\mbf{a}\cdot(\mbf{b}+\mbf{c})\,= \,\mbf{a}\cdot\mbf{b} + \mbf{a}\cdot\mbf{c},
\qquad\mbox{ if } \ \mbf{b\,c}\geq 0.
\end{equation} 
  
E.\,Garde\~{n}es et al. introduced in \cite{GardTre-80}, for a complete description 
of all cases of distributivity, the concept of \emph {distributive areas} defined 
by an interval $\mbf{a}$. Membership of the operands in these distributivity areas 
leads to equalities instead of inclusions \eqref{Subdistr}--\eqref{Superdistr}. 
Later, S.\,Markov and his co-workers, as a classification of various particular 
cases of the distributivity of addition with respect to multiplication in $\mbb{KR}$, 
proposed a ``generalized distribution law'' \cite{DimiMarkPop,MarPopUll,Popova}, 
covering a large number of various situations. Of the variety of cases considered 
in these articles, we will further need the following relation: 
\begin{equation}
\label{MarkovDistr}
\mbf{a}\cdot(\mbf{b}+\mbf{c}) =
\mbf{a}\cdot\mbf{b} + (\dual\mbf{a})\cdot\mbf{c},
\end{equation}
if the intervals $\,\mbf{b}$, $\mbf{c}$ and $\mbf{b}+\mbf{c}\,$ have definite signs 
and $\,\sgn\mbf{b} = -\sgn\mbf{c} = \sgn(\mbf{b}+\mbf{c})$.

  
\section{Theory}

In this section, we consider the basic theoretical facts concerning interval linear 
systems of equations \cite{Shary-95,Shary-96,Shary-98,SharySurvey}. 
  
Despite the simple structure of the system of equations \eqref{InLinSys1}, we can 
use, for its solution, some elimination methods, symbolic transformations, etc., only 
in very particular situations. The reason is insufficient algebraic properties of the 
interval arithmetics. The absence of full distributivity in Kaucher arithmetic makes 
it generally impossible to perform even such a simple operation as reduction of similar 
terms. It is for this reason that the methods considered in our work are essentially 
\emph{numerical}. An important theoretical result on formal solutions to interval 
linear systems was obtained by A.V.\,Lakeyev who managed to show the NP-complexity 
(intractability) of computing formal (algebraic) solutions to interval systems 
of linear equations in general form \cite{Lakeyev-95, Lakeyev-96}. 
  
To find formal solutions of interval linear systems of equations, several numerical 
methods were proposed, of which the subdifferential Newton method is the most efficient 
(see \cite{Shary-95,Shary-98,SharyBook}, and its computer implementations are freely 
available at \cite{SharyCodes}). The goal of this paper is the development of stationary 
single-step iterative methods for computing formal (algebraic) solutions to interval 
systems of linear algebraic equations. The need to build such methods is due 
to a number of facts. Despite very high efficacy of the subdifferential Newton method 
in practice, its justification for the most general case faces a number of difficulties. 
In addition to calculating the solution, the methods of the type we are going to construct 
provide also a proof of the uniqueness of the found solution, which is not provided 
by the subdifferential Newton method. Finally, another reason for the need to develop 
single-step stationary iterative methods is the fact that they are able to solve interval 
systems of equations that are not linear in form, for example, 
\begin{equation*} 
\mbf{A}x = \mbf{b}(x), 
\end{equation*} 
where $\mbf{b}(x)$ is an interval function of the unknown variable $x$. 
  
To date, computational mathematics has accumulated a large arsenal of theoretical 
approaches and efficient practical algorithms for solving a wide variety of equations 
and systems of equations. Can we use any of these methods? Is it possible to apply to 
our problem any of the traditional numerical methods for solving ``operator equations''? 
Yes, but with some reservations and modifications. 
  
The majority of traditional methods for the solution of equations and systems of equations 
relate to operator equations in linear spaces. Formally, these methods are not applicable 
to the problem of computing the formal (algebraic) solutions of system \eqref{InLinSys1}, 
since $\mbb{IR}^n$ and $\mbb{KR}^n$ are not linear spaces (see \cite{Shary-98}). However, 
we can easily circumvent this difficulty by embedding the space $\mbb{IR}^n$ or $\mbb{KR}^n$ 
into the usual well-studied Euclidean space $\mbb{R}^{2n}$. 
  
As we have already noted, the problem of finding formal solutions to interval equations 
is, in essence, the traditional mathematical problem of solving some equations, and most 
of the classical numerical analysis is devoted to solving such problems. But the peculiarity 
of our situation is that the basic set $\mbb{KR}^n$, on which the equations to be solved 
are considered, is not a linear space at all: the lack of distributivity in interval 
arithmetic leads to a violation of the axiom of linear space that requires the fulfillment 
of the identity 
\begin{equation*} 
(\mu + \nu) \,\mbf{x} = \mu\, \mbf{x} + \nu\,\mbf{x} 
\end{equation*}
for all $\mbf{x}\in\mbb{KR}^n$ and any scalars $\mu, \nu\in\mbb{R}$. Thus, most of the existing 
approaches to the study of operator equations and to the calculation of their solutions are not 
directly applicable to our problem. 
  
Moreover, remaining within the interval space $\mbb{KR}^n$, we will not be able to perform 
a theoretical analysis of the situation and understand some phenomena. For example, 
the point matrix 
\begin{equation} 
\label{BadMatrix}
\left(
\begin{array}{rr}
 1 & 1 \\ 
-1 & 1
\end{array} 
\right) 
\end{equation}
is regular (non-singular) in the sense of classical linear algebra, but multiplication 
by this matrix in $\mbb{KR}^2$ can nullify a non-zero vector: 
\begin{equation*} 
\left( 
\begin{array}{rr}
 1 & 1 \\[1mm]
-1 & 1 
\end{array} 
\right) 
\cdot
\left( 
\begin{array}{c} 
[-1, 1] \\[1mm] 
[1, -1]
\end{array}
\right) =
\left(
\begin{array}{c}
0 \\[1mm] 0
\end{array}
\right).
\end{equation*}
What is the reason? It is hardly possible to detect it from within the interval space, 
which is essentially non-linear. So, there is an urgent need to transfer our considerations 
to a certain \emph{linear space}, which we denote by $ U $ for generality. We also assume 
that a topological structure is determined on $U$ consistent with its linear structure. 

From an abstract mathematical point of view, we have two different spaces, the interval 
space $\mbb{KR}^n$ and the linear space $U$, on which essentially different algebraic 
structures are given. How is it possible to ``jump over'' from the first one to the second? 
We are going to do it in a way similar to a change of variables, which is defined 
in the following subsection and is called \emph{immersion}. 
  
  
\subsection{Definition and main properties}

To transfer our considerations from the interval space to a linear one, we should build 
some mapping 
\begin{equation*}
\iota: \mbb{KR}^n \to U,
\end{equation*}
--- \emph{embedding} of the interval space $\mbb{KR}^n$ into the linear space $U$. 
It must be \emph{bijective} (one-to-one mapping ``to'') in order to correctly restore 
the interval preimage by its image in $U$, and vice versa. Further, it is easy 
to understand that any bijection $\iota: \mbb{KR}^n \to U$ also generates a~bijection 
from the set of all mappings of $\mbb{KR}^n$ into itself onto the set of all mappings 
of $U$ to itself. More precisely, each $\varphi: \mbb{KR}^n \to\mbb{KR}^n$ is 
associated with a uniquely determined mapping 
\begin{equation}
\label{InducedMapping}
\iota\circ\varphi\circ\iota^{-1}: U\to U,
\end{equation}
where ``$\circ$'' denotes a composition of mappings. Thus, we can argue consider 
mappings of linear spaces as ``exact copies'' of interval mappings. 

\begin{definition}
For an interval mapping $\varphi: \mbb{KR}^n \to\mbb{KR}^n$ and a fixed embedding 
$\;\iota: \mbb{KR}^n \to U$, we will refer to the mapping of the linear space $U$ into 
itself which is determined by \eqref{InducedMapping} as \textsl{induced mapping} for 
$\varphi$ (or, expanded, \textsl{$\iota$-induced}). 
\end{definition}
  
Visually, the situation is represented by a commutative diagram in Fig.~\ref{ComDiagram}. 
  
  
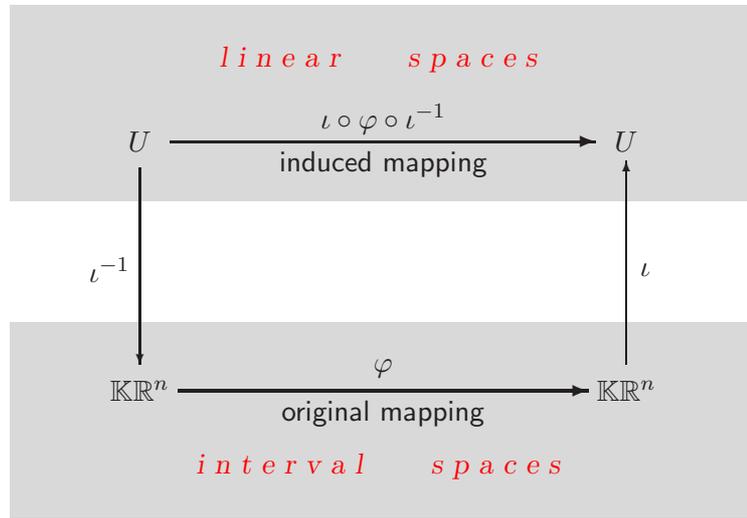
\begin{figure}[htb]
\small\setlength{\unitlength}{1mm}
\hfill\begin{picture}(72,75)
\put(37,59){\makebox(0,0){\fcolorbox[gray]{1}{0.85}{%
\makebox(96,24)[t]{\rule{0mm}{7mm}\sl\color{red}
  l i n e a r \qquad s p a c e s }}}}
\put(37,17){\makebox(0,0){\fcolorbox[gray]{1}{0.85}{%
\makebox(96,24)[b]{\rule[-5mm]{0mm}{0mm}\sl\color{red} 
  i n t e r v a l \qquad s p a c e s }}}}
{\thicklines \put(10,21){\vector(1,0){54}} \put(9,54){\vector(1,0){56}}}
\put(5,50.5){\vector(0,-1){26}} \put(69,24.5){\vector(0,1){27}}
\put(5,21){\makebox(0,0){$\mbb{KR}^n$}}
\put(69,21){\makebox(0,0){$\mbb{KR}^n$}}
\put(37,24){\makebox(0,0){$\varphi$}}
\put(1,37){\makebox(0,0){$\iota^{-1}$}}
\put(71.5,37){\makebox(0,0){$\iota$}}
\put(37,57){\makebox(0,0){$\iota\circ\varphi\circ\iota^{-1}$}}
\put(5,54){\makebox(0,0){$U$}} \put(69,54){\makebox(0,0){$U$}}
\put(37,18){\makebox(0,0){\sf original mapping}}
\put(37,51){\makebox(0,0){\sf induced mapping}}
\end{picture}\hfill\null
\caption{How an immersion $\iota$ generates an induced mapping.} 
\label{ComDiagram}
\end{figure}
  
  
The properties of the mappings $\varphi$ and $(\,\iota\circ\varphi\circ\iota^{-1})$ are 
closely related, so instead of the study of $\varphi$, one can investigate the induced 
mapping $(\,\iota\circ\varphi\circ\iota^{-1})$. Moreover, we can replace the problem 
of solving the equation in $\mbb{KR}^n$ with the problem of solving the equation 
in the linear space $U$, coming to a situation more familiar to modern numerical 
analysis.

\begin{definition} 
Let the equation be given in the interval space $\mbb{KR}^n$, 
\begin{equation}
\label{IntervalEquation}
\varphi (x) = \psi(x),
\end{equation}
where $\varphi, \psi: \mbb{KR}^n\to\mbb{KR}^n$ are some mappings, and an embedding 
$\iota:\mbb{KR}^n\to U$ is fixed. We shall call the \textsl{induced equation} for 
\eqref{IntervalEquation} such an equation 
\begin{equation*} 
\Phi (y) = \Psi(y)
\end{equation*} 
in the linear space $U$, that $\,\Phi$ and $\Psi$ are induced mappings for $\varphi$ 
and $\psi$ respectively, i.\,e. $\Phi = \iota\circ\varphi\circ\iota^{-1}$ and $\Psi 
= \iota\circ\psi\circ\iota^{-1}$.
\end{definition}

Thus, the initial interval equation
\begin{equation}
\label{IniEquation}
\varphi (x) = \psi(x)
\end{equation}
has a formal solution $\mbf{x}^\ast\in\mbb{KR}^n$ if and only if the induced equation
\begin{equation*} 
\Phi (y) = \Psi(y)
\end{equation*} 
has a solution $y^\ast\in U$ in the linear space. In this case, the desired formal 
interval solution $\mbf{x}^\ast$ for \eqref{IniEquation} is uniquely reconstructed 
by $y^\ast$ from the relation 
\begin{equation*} 
\mbf{x}^\ast = \iota^{-1} (\, y^\ast).
\end{equation*} 
  
We are interested in the specific situation with interval linear equations 
\eqref{InLinSys1}--\eqref{InLinSys2}. We can change the original problem 
--- finding solution of the equation 
\begin{equation*} 
\mbf{f}(x) = 0,
\end{equation*} 
such that 
\begin{equation*}
\mbf{f}: \mbf{x}\mapsto\mbf{Ax}\ominus\mbf{b} 
\end{equation*}
to the problem of solving equations 
\begin{equation*}
\eus{F}(y) = \iota(0) 
\end{equation*}
in the linear space $U$ with induced mappings 
\begin{equation*}
\eus{F} = \iota\circ\mbf{f}\circ\iota^{-1}: U\to U, 
\end{equation*}
defined as
\begin{equation*} 
\eus{F} (y) =
\iota\bigl (\,\mbf{A}\iota^{-1}(y)\ominus\mbf{b}\,\bigr). 
\end{equation*} 
  
A more general consideration. Since $\iota$ and $\iota^{-1}$ are bijections, then 
the invertibility of any mapping $\varphi$ on the interval space is equivalent to the 
invertibility of the $\iota$-induced map $\Phi := \iota \circ\varphi\circ\iota^{-1}$ 
acting on the linear space $U$. Herewith 
\begin{equation}
\label{InvMap}
\varphi^{-1} = \iota^{-1}\circ\Phi^{-1}\circ\iota. 
\end{equation}
  
The main question concerning the construction of the embedding of the interval space 
into the linear space is to choose a reasonable compromise between the simplicity of 
this mapping and the convenient form of induced mappings \eqref{InducedMapping}. Among 
all bijective embedding of $\iota:\mbb{KR}^n\to U$, it makes sense to select special 
embeddings that 
\begin{itemize}
\item[1)]
preserve the additive algebraic structure of $\mbb{KR}^n$, \\*  
i.\,e. such that $\iota(\mbf{u} + \mbf{v}) = \iota(\mbf{u}) + \iota(\mbf{v})$ 
for any $\mbf{u}, \mbf{v}\in\mbb{KR}^n$, 
\item[2)]
preserve the topological structure of $\mbb{KR}^n$, \\* 
i.\,e. such that both the mapping $\iota:\mbb{KR}^n\to U$ itself \\* 
and its inverse $\iota^{-1}:U\to\mbb{KR}^n$ are continuous. 
\end{itemize}
Embeddings $\mbb{KR}^n\to U$ satisfying two above prescribed conditions will be called 
\emph{immersions} of the interval space $\mbb{KR}^n$ into the linear space $U$. Thus, 
formally we accept the following

\begin{definition} \textnormal{\cite{Shary-95}} 
\setcounter{ImmersDefi}{\value{defi}} 
Let $U$ be a linear space. A bijective mapping $\iota:\mbb{KR}^n\to U$ will be called 
\textsl{immersion} of $\mbb{KR}^n$ in $U$, if it satisfies the following properties:\\[1mm] 
\hspace*{1em}{\normalfont(1)} $\iota$ is an isomorphism of additive
  groups $\mbb{KR}^n$ and $U$, \\
\hspace*{1em}{\normalfont(2)} $\iota$ is a homeomorphism
  of topological spaces $\mbb{KR}^n$ and $U$.
\end{definition}

For example, if the interval $\mbf{v}\in\mbb{KR}$ is matched with a pair of numbers 
$(\un{\mbf{v}}, \ov{\mbf{v}})\in\mbb{R}^2$, i.\,e. its endpoints, ``forgetting'' about 
their interval sense, then the mapping $\mbb{KR} \to\mbb{R}^2$ is an immersion. This 
example is typical in some sense, since, by involving dimension considerations (see, 
e.\,g., \cite{Engelking}), it is easy to show that Definition~\arabic{ImmersDefi} 
determines the linear space $U$ uniquely: \emph{$U$ must be the Euclidean space 
$\mbb{R}^{2n}$}. This fact is in good agreement with our analytical intuition, and 
we do not give here its strict substantiation, so as not to overload the overgrown 
text of the article. The purpose of this preparatory section is the study of the 
simplest properties of immersions that we will need in the future, when solving 
the induced equations. 
  
Denote by $0_{\mbb{KR}^n}$ and $0_{\mbb{R}^{2n}}$ zero vectors in the spaces $\mbb{KR}^n$ 
and $\mbb{R}^{2n}$ respectively. It immediately follows from Definition~\arabic{ImmersDefi}, 
that for any immersion $\iota:\mbb{KR}^n\to\mbb{R}^{2n}$, we have 
\begin{align} 
\iota (\; 0_{\mbb{KR}^n}) \ &= \ 0_{\mbb{R}^{2n}}, \nonumber  \\[2mm]
\iota(\opp\mbf{x}) \ & = \ -\iota(\mbf{x}),\qquad\mbf{x}\in\mbb{KR}^n.
\label{Opposite1}
\end{align}
At the same time, 
\begin{equation*}
\iota(\mbf{x})\neq 0 \ \mbox{ in }\mbb{R}^{2n}
\quad\Longleftrightarrow\quad
\mbf{x}\neq 0 \ \mbox{ in }\mbb{KR}^n.
\end{equation*}
In addition, the inverse of the immersion map $\iota^{-1}: \mbb{R}^{2n}\to\mbb{KR}^n$ also 
satisfies conditions similar to (1)--(2) from Definition~\arabic{ImmersDefi}, and 
\begin{align}
\iota^{-1}(\;0_{\mbb{R}^{2n}} ) \ &= \ 0_{\mbb{KR}^n}, \nonumber \\[2mm]
\iota^{-1} (- x) \ &= \ \opp\iota^{-1} (x),\qquad x\in\mbb{R}^{2n}.
\label{Opposite2}
\end{align}

\begin{proposition}
\label{HomogePropo}
The immersion is a positive-homogeneous map, i.\,e.
\begin{equation*}
\iota (\lambda\mbf{x}) = \lambda\,\iota(\mbf{x})\quad
\text{ for all } \ \mbf{x}\in\mbb{KR}^n \text{ and } \lambda \geq 0.
\end{equation*}
The mapping $\mbb{R}^{2n}\to\mbb{KR}^n$, inverse to an immersion, is also positively 
homogeneous. 
\end{proposition}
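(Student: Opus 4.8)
The plan is to treat this as a Cauchy-type functional equation on the additive group $\mbb{KR}^n$, exploiting only the two properties guaranteed by Definition~\arabic{ImmersDefi} --- additivity and continuity --- together with the explicit description \eqref{ScalarMult} of multiplication by a nonnegative scalar. The crucial preliminary observation is that, for $\lambda\geq 0$, the scalar multiple interacts with the internal (endpoint-wise) addition exactly as in an ordinary vector space: from \eqref{ScalarMult} one has $\lambda\mbf{x}=[\,\lambda\un{\mbf{x}},\lambda\ov{\mbf{x}}\,]$ in each component, so that for any nonnegative $\lambda,\mu$ the identity $(\lambda+\mu)\mbf{x}=\lambda\mbf{x}+\mu\mbf{x}$ holds. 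In particular, for a positive integer $m$ the scalar multiple $m\mbf{x}$ coincides with the $m$-fold sum $\mbf{x}+\cdots+\mbf{x}$.

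First I would establish homogeneity over the nonnegative integers. Since $\iota$ is an additive homomorphism, $\iota(m\mbf{x})=\iota(\mbf{x}+\cdots+\mbf{x})=m\,\iota(\mbf{x})$ for every positive integer $m$, while the case $m=0$ is covered by the relation $\iota(0_{\mbb{KR}^n})=0_{\mbb{R}^{2n}}$ recorded above. Next I would pass to nonnegative rationals: for a positive integer $m$, applying the integer case to the element $\tfrac1m\mbf{x}$ gives $\iota(\mbf{x})=\iota\bigl(m\cdot\tfrac1m\mbf{x}\bigr)=m\,\iota\bigl(\tfrac1m\mbf{x}\bigr)$, whence $\iota\bigl(\tfrac1m\mbf{x}\bigr)=\tfrac1m\iota(\mbf{x})$; combined with the integer case this yields $\iota(q\mbf{x})=q\,\iota(\mbf{x})$ for every nonnegative rational $q$.

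Finally I would remove the rationality restriction by continuity. Fix $\lambda\geq 0$ and a sequence of nonnegative rationals $q_k\to\lambda$. Because $\lambda\mbf{x}=[\,\lambda\un{\mbf{x}},\lambda\ov{\mbf{x}}\,]$ depends continuously on the scalar in the natural topology of $\mbb{KR}^n$, we have $q_k\mbf{x}\to\lambda\mbf{x}$; since $\iota$ is continuous by property (2) of Definition~\arabic{ImmersDefi} and scalar multiplication in $\mbb{R}^{2n}$ is continuous, passing to the limit in $\iota(q_k\mbf{x})=q_k\,\iota(\mbf{x})$ gives $\iota(\lambda\mbf{x})=\lambda\,\iota(\mbf{x})$, as claimed. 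The statement for $\iota^{-1}$ then follows without repeating the argument: for $\lambda>0$ and $x\in\mbb{R}^{2n}$, writing $\mbf{y}=\iota^{-1}(x)$ we get $\lambda x=\lambda\iota(\mbf{y})=\iota(\lambda\mbf{y})$, so $\iota^{-1}(\lambda x)=\lambda\mbf{y}=\lambda\,\iota^{-1}(x)$, and the case $\lambda=0$ reduces to $\iota^{-1}(0_{\mbb{R}^{2n}})=0_{\mbb{KR}^n}$.

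The main obstacle --- and the reason the statement is confined to $\lambda\geq 0$ --- is precisely the point stressed earlier in the paper: the identity $(\mu+\nu)\mbf{x}=\mu\mbf{x}+\nu\mbf{x}$ fails in $\mbb{KR}^n$ when $\mu$ and $\nu$ have opposite signs, so $\mbb{KR}^n$ is not a linear space and the classical ``continuous additive implies linear'' theorem does not apply verbatim. The argument goes through only because, on the nonnegative cone of scalars, formula \eqref{ScalarMult} makes scalar multiplication genuinely distribute over the internal addition. Verifying this compatibility, together with the continuity of $\lambda\mapsto\lambda\mbf{x}$ that feeds the limiting step, is the technical heart of the proof; everything else is the standard integer-to-rational-to-real bootstrap.
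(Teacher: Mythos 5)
Your proposal is correct and follows essentially the same route as the paper's own proof: the standard bootstrap from natural numbers to reciprocals of naturals to nonnegative rationals, followed by a continuity argument to reach all $\lambda\geq 0$. The only differences are cosmetic improvements --- you make explicit the preliminary fact from \eqref{ScalarMult} that $m\mbf{x}$ equals the $m$-fold sum for nonnegative scalars (which the paper uses tacitly), and you deduce the statement for $\iota^{-1}$ directly from the one for $\iota$ instead of repeating the argument.
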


\noindent
\textbf{Proof} is standard. Let $\mbf{x}\in\mbb{KR}^n$. If $\lambda = k$ is a natural 
number, then 
\begin{equation*} 
\iota(k\mbf{x}) \ =  \ 
\iota (\,\underbrace{\mbf{x} + \mbf{x} + \cdots + \mbf{x}}_k\,) 
\ = \ k\,\iota (\mbf{x}). 
\end{equation*}
If $\lambda = 1/l$ for some natural $l$, then, from 
\begin{equation*} 
l\,\iota(\lambda\mbf{x})\; = \; 
\underbrace{\iota(\lambda\mbf{x}) + \iota (\lambda\mbf{x}) + \cdots 
 + \iota (\lambda\mbf{x})}_l \ = \  \iota(l\lambda x)\; = \; \iota(\mbf{x}), 
\end{equation*} 
it follows that 
\begin{equation*}
\iota(\lambda\mbf{x}) \ = \ l^{-1}\iota(\mbf{x})
\ = \ \lambda\,\iota (\mbf{x}).
\end{equation*}
    
If $\lambda = k/l$ for natural numbers $k$ and $l$, then, using the already considered 
cases, we obtain 
\begin{equation*}
\iota(\lambda\mbf{x}) \ = \iota\left(\frac{k}{l}\,\mbf{x}\right)
\ = \ k\,\iota\left(\frac{1}{l}\,\mbf{x}\right) \ = \frac{k}{l}\,
\iota (\mbf{x}) \ = \ \lambda\,\iota (\mbf{x}).
\end{equation*}
Hence, the equality $\,\iota(\lambda\mbf{x}) = \lambda\,\iota (\mbf{x})\,$ is valid 
for all nonnegative rational numbers $\lambda$. Extending it to all non-negative real 
numbers can be done by passing to the limit, using the continuity of the immersion 
$\iota$. 
  
The proof for a mapping $\iota^{-1}$ which is inverse to an immersion is performed 
in a completely similar way. 
\hfill$\blacksquare$

\begin{proposition}
\label{ImmersPropo}
If $\iota: \mbb{KR}^n\to\mbb{R}^{2n}$ is an immersion, and $T$ is a non-singular linear 
transformation of the space $\mbb{R}^{2n}$, then $(T\circ\iota)$ is also an immersion. 
\par\noindent 
Conversely, any other immersion $\varkappa :\mbb{KR}^n\to\mbb{R}^{2n}$ can be represented 
as $(T\circ\iota)$ for some nonsingular linear transformation $T :\mbb{R}^{2n}\rightarrow 
\mbb{R}^{2n}$. 
\end{proposition}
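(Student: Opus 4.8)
The plan is to prove the two assertions separately, in each case by directly checking the two conditions of Definition~\arabic{ImmersDefi}. For the first assertion I would take an immersion $\iota:\mbb{KR}^n\to\mbb{R}^{2n}$ together with a non-singular linear map $T$ of $\mbb{R}^{2n}$, and verify that $T\circ\iota$ is again an immersion. Bijectivity is immediate, since $T\circ\iota$ is a composition of bijections. For the additive-group isomorphism property I would compute, for $\mbf{u},\mbf{v}\in\mbb{KR}^n$,
\begin{equation*}
(T\circ\iota)(\mbf{u}+\mbf{v}) = T\bigl(\iota(\mbf{u})+\iota(\mbf{v})\bigr) = T(\iota(\mbf{u}))+T(\iota(\mbf{v})),
\end{equation*}
using additivity of $\iota$ and linearity of $T$. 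The homeomorphism property then follows because $T\circ\iota$ is a composition of continuous maps, while its inverse is $\iota^{-1}\circ T^{-1}$, again a composition of continuous maps (recall that every linear transformation of the finite-dimensional space $\mbb{R}^{2n}$ is automatically continuous, as is the inverse of a non-singular one). I expect this direction to be completely routine.

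For the converse, the key idea is to set $T := \varkappa\circ\iota^{-1}$, so that $T\circ\iota = \varkappa$ holds by construction; the entire task then reduces to showing that this particular $T$ is a non-singular linear transformation of $\mbb{R}^{2n}$. Bijectivity of $T$ is clear, as it is a composition of the bijections $\iota^{-1}$ and $\varkappa$, so non-singularity will follow as soon as linearity is established. Additivity of $T$ I would read off directly from the fact that both $\iota^{-1}$ and $\varkappa$ are isomorphisms of additive groups, so that $T(x+y)=\varkappa(\iota^{-1}(x)+\iota^{-1}(y))=T(x)+T(y)$.

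The main obstacle is to upgrade additivity to full $\mbb{R}$-homogeneity, since additivity by itself yields only homogeneity with respect to rational scalars. Here I would invoke the positive-homogeneity of immersions and of their inverses (Proposition~\ref{HomogePropo}): for $\lambda\geq 0$ this gives at once $T(\lambda x)=\varkappa(\lambda\,\iota^{-1}(x))=\lambda\,T(x)$. To reach negative scalars I would use the relations \eqref{Opposite1} and \eqref{Opposite2}, namely $\iota^{-1}(-x)=\opp\iota^{-1}(x)$ and $\varkappa(\opp\mbf{y})=-\varkappa(\mbf{y})$, which together yield $T(-x)=-T(x)$; combining this with the positive case, by writing $\mu=-|\mu|$ for $\mu<0$, gives $T(\mu x)=\mu\,T(x)$ for every real $\mu$. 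Once $T$ is seen to be additive and $\mbb{R}$-homogeneous, it is a linear bijection, i.e. a non-singular linear transformation satisfying $T\circ\iota=\varkappa$, which completes the argument.
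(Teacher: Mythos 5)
Your proposal is correct and follows essentially the same route as the paper: the first part is routine, and for the converse you set $T=\varkappa\circ\iota^{-1}$, obtain additivity from the group-isomorphism property, positive homogeneity from Proposition~\ref{HomogePropo}, and then extend to negative scalars to conclude linearity. The only cosmetic difference is that you derive $T(-x)=-T(x)$ from \eqref{Opposite1}--\eqref{Opposite2}, whereas the paper gets the same identity directly from additivity applied to $x-x$; both are equally valid.
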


\begin{proof}
The first part of Proposition is justified trivially. 
  
To prove the second part, we consider the mapping $(\,\varkappa\circ\iota^{-1})$. Being 
a composition of two isomorphisms, it is an automorphism of the additive group of the linear 
space $\mbb{R}^{2n}$, and by virtue of Proposition~\ref{HomogePropo} this map is also 
positively homogeneous. Also, for any $x\in\mbb{R}^{2n}$ 
\begin{equation*}
(\, \varkappa\circ\iota^{-1})(x - x) \ = \ 0_{\mbb{R}^{2n}}\,
= \ (\, \varkappa\circ\iota^{-1})(x) + (\, \varkappa\circ\iota^{-1}) (- x), 
\end{equation*}
which implies 
\begin{equation*}
(\, \varkappa\circ\iota^{-1}) (- x) = - (\, \varkappa\circ\iota^{-1}) (x). 
\end{equation*}
Hence, we get the homogeneity of $(\, \varkappa\circ\iota^{-1})$ with respect to multiplication 
by negative numbers as well. 
  
Thus, in general, the mapping $\varkappa\circ\iota^{-1}\,$ turns out to be a nonsingular 
linear transformation of the space $\mbb{R}^{2n}$. We can therefore take $\,T = \varkappa
\circ \iota^{-1}$. 
\end{proof}
  
  
\subsection{Standard immersion} 
\label{StaImmersionSubse}

From Proposition~\ref{ImmersPropo}, it follows that any two immersions of $\mbb{KR}^n$ into 
$\mbb{R}^{2n}$ that satisfy Definition~\arabic{ImmersDefi}, are the same to within a linear 
transformation of $\mbb{R}^{2n}$. In fact, the choice of a convenient immersion turns out 
to be even more constrained, since in interval analysis, in addition to the algebraic 
properties taken into account by Definition~\arabic{ImmersDefi}, the structure of the partial 
order with respect to inclusion plays an important role. It should also be ``adequately'' 
transferred by immersion from $\mbb{KR}^n$ to $\mbb{R}^{2n}$. 
  
Every immersion $\iota :\mbb{KR}^n\to\mbb{R}^{2n}$ naturally generates a partial ordering 
``$\sqsubseteq$'' in the linear space $\mbb{R}^{2n}$, which is the image of the inclusion 
ordering ``$\subseteq$'' in $\mbb{KR}^n$ under the immersion $\iota$. Namely, 
\begin{equation}
\label{OrderDefi}
\begin{array}{c}
x\sqsubseteq y, 
  \mbox{ i.\,e., ``$x$ does not exceed $y$'' \ in $\mbb{R}^{2n}$ }\\[5pt]
\Updownarrow\\[7pt]
\iota^{-1}(x)\,\subseteq\,\iota^{-1}(y)\ \mbox{ in } \ \mbb{KR}^n.
\end{array}
\end{equation}
  
\begin{definition}
The partial ordering ``$\,\sqsubseteq$'' on $\mbb{R}^{2n}$, defined through \eqref{OrderDefi}, 
will be called \textsl{induced partial ordering}. 
\end{definition}
  
For any $x,\,y,\,u,\,v\in\mbb{R}^{2n}$, there holds 
\begin{eqnarray*}
&& x\sqsubseteq y,\ \alpha\geq 0 \quad\Rightarrow\quad
\alpha x\sqsubseteq\alpha y,\\[2mm]
&& x\sqsubseteq y,\ u\sqsubseteq v \quad\Rightarrow\quad
x+u\sqsubseteq y+v,
\end{eqnarray*} 
and in such cases it is said that the partial order ``$\sqsubseteq$'' is consistent with 
the linear structure on $\mbb{R}^{2n}$ \cite{KantoAki,Schaefer}. As a consequence, this partial 
order can be equivalently defined by specifying the so-called cone of positive elements, 
i.\,e. the set $K_\sqsubseteq = \{\,x\in\mbb{R}^{2n}\mid 0\sqsubseteq x\,\}$ \cite{Krasnosel, 
Schaefer}. 
  
Recall that a \emph{cone} in a linear topological space is a closed convex positively 
invariant set that does not contain any one-dimensional subspace. As is known, in a partially 
ordered linear space, where the order is consistent with the linear structure, the set of 
positive elements is a cone. Conversely, the assignment of the cone $K_\sqsubseteq$ 
uniquely determines the partial ordering of the space, for which 
\begin{equation*}
x\sqsubseteq y
  \quad\Longleftrightarrow\quad
  y - x\in K_\sqsubseteq .
\end{equation*}
  
It is clear that the specific formulas determining the induced order of ``$\sqsubseteq$'' 
depend on the form of the immersion $\iota$. But on the Euclidean space $\mbb{R}^{2n}$, 
the simplest and most convenient is to set the order in a component-wise manner, when 
\begin{equation}
\label{StandardOrder}
x\leq y\quad\Longleftrightarrow\quad
x_i\leq y_i, \ i = 1,2,\ldots,2n.
\end{equation}
Accordingly, the cone of positive elements with this ordering $\mbb{R}^{2n}$ is the set 
\begin{equation*}
K_\leq = \{\,x\in\mbb{R}^{2n}\mid \,x_i\geq 0,\,i = 1,2,\ldots,2n\,\}, 
\end{equation*} 
i.\,e., the positive orthant of the space $\mbb{R}^{2n}$. It is therefore natural to require 
from the immersion that the order \eqref{OrderDefi} induced by it coincides with this simple 
component-wise order \eqref{StandardOrder}, i.\,e. that 
\begin{equation}
\label{InducedOrder}
x\sqsubseteq y\quad\Longleftrightarrow\quad
x\leq y \ \mbox{ in component-wise sense. }
\end{equation}
For what immersion $\mbb{KR}^n\to\mbb{R}^{2n}$, is this possible? 
  
It is easy to see that the required immersion is the so-called \emph{standard immersion}, 
first introduced in \cite{Shary-95}: 
  
\begin{definition}
The immersion $\,\sti:\mbb{KR}^n \to\mbb{R}^{2n}$ that acts according 
to the rule 
\begin{equation}
\label{StandardImmersion}
(\,\mbf{x}_1, \mbf{x}_2, \ldots, \mbf{x}_n)
\mapsto
(\,-\un{\mbf{x}}_1, -\un{\mbf{x}}_2, \ldots , -\un{\mbf{x}}_n ,
\ov{\mbf{x}}_1, \ov{\mbf{x}}_2, \ldots , \ov{\mbf{x}}_n),
\end{equation}
i.\,e., such that the negated left endpoints of the intervals $\mbf{x}_1$, $\mbf{x}_2$, 
\ldots, $\mbf{x}_n$ become the first, second, \ldots, $n$-th component of the point 
$2n$-vector, while the right endpoints of the intervals $\mbf{x}_1, \mbf{x}_2, \ldots, 
\mbf{x}_n$ become the $(n+1)$-st, \ldots, $2n$-th components of the point $2n$-vector 
respectively will be called \textsl{standard immersion} of the interval space $\mbb{KR}^n$ 
into $\mbb{R}^{2n}$. 
\end{definition}

\begin{corollary}. 
From definition \eqref{OrderDefi} of the induced order on $\mbb{R}^{2n}$ and 
the requirement \eqref{InducedOrder} for the standard immersion sti, it is easy 
to deduce that 
\begin{equation}
\label{SigmaProperty1}
\sti\biggl(\;\bigvee_{\vartheta\in\varTheta}\;\mbf{x}_\vartheta\;\biggr) 
\ = \  \sti\,\Bigl(\;\sup\limits_{\vartheta\in\varTheta}{}_\subseteq \ 
\mbf{x}_\vartheta\Bigr) \ = \ \sup\limits_{\vartheta\in\varTheta}{}_\leq  
\  \sti\,(\,\mbf{x}_\vartheta)
\end{equation}
for any bounded family of interval vectors $\{\,\mbf{x}_{\vartheta}\in\mbb{KR}^n 
\mid\vartheta\in\varTheta\,\}$, where $\varTheta$ is an index set. Therefore, the standard 
immersion maps supremums with respect to inclusion in the interval space $\mbb{KR}^n$ 
to supremums with the component-wise order on $\mbb{R}^{2n}$. The same is true for 
infimums. 
\end{corollary}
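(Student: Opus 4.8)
The plan is to recognize that the standard immersion $\sti$ is not merely a bijection but an \emph{order isomorphism} from $(\mbb{KR}^n,\subseteq)$ onto $(\mbb{R}^{2n},\leq)$, with $\leq$ the component-wise order, and then to invoke the elementary fact that order isomorphisms carry suprema to suprema. Indeed, requirement \eqref{InducedOrder} says exactly that the induced order $\sqsubseteq$ coincides with the component-wise order $\leq$; unfolding definition \eqref{OrderDefi} of $\sqsubseteq$ and writing $x=\sti(\mbf{u})$, $y=\sti(\mbf{v})$, this reads
\begin{equation*}
\mbf{u}\subseteq\mbf{v}\quad\Longleftrightarrow\quad \sti(\mbf{u})\leq\sti(\mbf{v})\ \text{ component-wise,}
\end{equation*}
which, together with the bijectivity of $\sti$, is precisely the assertion that $\sti$ is an order isomorphism of the two posets.

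Next I would record the underlying order-theoretic principle and its short justification: if $f$ is an order isomorphism and $S$ a subset whose supremum $s$ exists in the source poset, then $f(s)$ is an upper bound of $f(S)$ by monotonicity of $f$, and it is the \emph{least} such bound because any upper bound $t$ of $f(S)$ may be written $t=f(u)$ by surjectivity, which forces $u$ to be an upper bound of $S$, hence $s\subseteq u$, hence $f(s)\leq t$ by monotonicity once more. Applying this with $f=\sti$, $S=\{\mbf{x}_\vartheta\}$ and $s=\bigvee_\vartheta\mbf{x}_\vartheta$ delivers \eqref{SigmaProperty1} at once. Existence is no obstacle: since $\mbb{KR}$ is a conditionally complete lattice (as noted earlier), the bounded family has a supremum on the left-hand side, and the argument just given produces the right-hand supremum as its image.

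For a reader who prefers the explicit endpoint computation — and to pinpoint where the sole subtlety sits — I would verify \eqref{SigmaProperty1} directly. In each coordinate, the endpoint formula \eqref{Supremum} together with inclusion \eqref{Order} gives
\begin{equation*}
\bigvee_{\vartheta}\mbf{x}_\vartheta \ = \ \Bigl[\,\inf_{\vartheta}\un{\mbf{x}}_\vartheta,\ \sup_{\vartheta}\ov{\mbf{x}}_\vartheta\,\Bigr].
\end{equation*}
Applying $\sti$ negates the block of left endpoints, turning $\inf_\vartheta\un{\mbf{x}}_\vartheta$ into $-\inf_\vartheta\un{\mbf{x}}_\vartheta=\sup_\vartheta(-\un{\mbf{x}}_\vartheta)$, while the block of right endpoints already reads $\sup_\vartheta\ov{\mbf{x}}_\vartheta$. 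In both blocks the $i$-th entry becomes $\sup_\vartheta[\sti(\mbf{x}_\vartheta)]_i$, i.e.\ exactly the $i$-th entry of the component-wise supremum of the vectors $\sti(\mbf{x}_\vartheta)$ in $\mbb{R}^{2n}$.

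The only step that genuinely demands care — the ``obstacle'', such as it is — is the sign reversal on the left endpoints built into \eqref{StandardImmersion}. It is exactly this negation that converts the inclusion order, which runs backwards on left endpoints, into the ordinary component-wise order, and it is what turns the infimum of a set of left endpoints into the supremum of their negatives. The infimum half of the statement then follows by the mirror-image argument: order isomorphisms preserve infima just as well, or, equivalently, one repeats the computation of the third paragraph with $\wedge$, \eqref{Infimum} and $\inf$ in place of $\vee$, \eqref{Supremum} and $\sup$.
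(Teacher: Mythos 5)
Your proof is correct and follows exactly the route the paper intends: the paper leaves the corollary as an easy deduction from \eqref{OrderDefi} and \eqref{InducedOrder}, which together say precisely that $\sti$ is an order isomorphism of $(\mbb{KR}^n,\subseteq)$ onto $(\mbb{R}^{2n},\leq)$, and order isomorphisms preserve suprema and infima. Your explicit endpoint computation, correctly identifying the sign reversal on left endpoints as the mechanism converting infima of lower bounds into suprema of their negatives, is a sound and welcome verification of the same fact.
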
 
  
So, the fact that the induced partial order on the linear space $\mbb{R}^{2n}$ coincides 
with the usual component-wise ordering and, hence, the simplification of calculations 
and reasoning are the main justification for the form \eqref{StandardImmersion} chosen 
for immersion called ``standard''. Moreover, the foregoing rather strongly suggests that 
in the theoretical part of our work we should consider only the standard immersion of 
the form \eqref{StandardImmersion}, although other immersions can sometimes be practically 
useful. For example, in the computer implementation of the algorithms described in this 
book, the author often used the simplest immersion 
\begin{equation*}
(\,\mbf{x}_1,\mbf{x}_2, \ldots, \mbf{x}_n) \mapsto 
   (\un{\mbf{x}}_1, \un{\mbf{x}}_2, \ldots, \un{\mbf{x}}_n, 
   \ov{\mbf{x}}_1, \ov{\mbf{x}}_2, \ldots, \ov{\mbf{x}}_n) 
\end{equation*}
which is more convenient for practical programming, etc. 
  
It is useful to provide a methodological commentary on the content of this and 
the preceding subsections. The method of identifying the endpoints of an interval 
or interval vector with the components of a point vector in the Euclidean space 
of double dimension was often used by researchers. But we singled out the procedure 
of this identification into a separate notion --- immersion $\mbb{KR}^n\to\mbb{R}^{2n}$ 
--- and undertook its thorough investigation. For what purpose? Could it be possible 
to do without ``unnecessary abstractions''?
  
In addition to the fact that explicit and conscious operating with any object is always 
more preferable than the implicit one, ``by default'', there are at least two more 
reasons to consider the immersion as an independent concept: 
\begin{itemize}
\parskip 0mm
\item[\color{blue}$\bullet$] 
the mapping $\mbb{KR}^n \to\mbb{R}^{2n}$ cannot be determined in a unique way 
once and for all, which would be the most convenient (natural, etc.) for all 
possible practical situations; 
\item[\color{blue}$\bullet$] 
we can get a tangible benefit from this non-uniqueness, i.\,e. the most complete 
use of the features of immersions in each specific case. 
\end{itemize}
It is easy to see that both of these arguments really apply in our situation.

  
\subsection{Extended multiplier matrix}

\begin{theorem}
\setcounter{MatrixTheo}{\value{theo}} 
Let the mapping $\,\iota :\mbb{KR}^n \to\mbb{R}^{2n}$ be an immersion and 
\begin{equation*}
\phi :\mbb{KR}^n\to\mbb{KR}^n
\end{equation*} 
is operator of multiplication by a point square matrix in the space $\mbb{KR}^n$, i.\,e. 
\begin{equation*}
\phi(\mbf{x})=Q\mbf{x}
\end{equation*}
for some $Q\in\mbb{R}^{n\times n}$, $Q=(\,q_{ij})$. Then the induced mapping 
$\;(\iota\circ\phi\circ\iota^{-1})$ is a linear transformation of the space $\mbb{R}^{2n}$.
For the standard immersion ``{\,\rm sti}'', the matrix of this induced linear transfor\-mation 
$(\,\sti\circ\phi\circ\sti^{-1})$ is a block $2n\times 2n$-matrix of the form 
\begin{equation}
\label{BlockForm}
\arraycolsep 3mm
\renewcommand{\arraystretch}{1.5}
\left( \begin{array}{c|c}
Q^+ & Q^- \\ \hline
Q^- & Q^+
\end{array} \right),
\end{equation}
where $n\times n$-submatrices $Q^+ = (\,q_{ij}^+)$ and $Q^- = (\,q_{ij}^-)$ are 
positive and negative parts of $Q$, i.\,e. the matrices made up of positive and 
negative parts of the elements of $Q$ respectively. 
\end{theorem}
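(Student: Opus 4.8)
The plan is to establish the two assertions in turn: the linearity of the induced mapping $\Phi := \iota\circ\phi\circ\iota^{-1}$ for an arbitrary immersion, and then the explicit block form in the special case $\iota = \mathrm{sti}$. For linearity I would argue structurally rather than by computation. Since $Q$ is a \emph{point} matrix, the componentwise application of \eqref{KRPointDistr} shows that $\phi(\mbf{x}+\mbf{y}) = \phi(\mbf{x})+\phi(\mbf{y})$, so $\phi$ is additive; as an immersion is an isomorphism of additive groups (Definition~\arabic{ImmersDefi}, property~(1)), both $\iota$ and $\iota^{-1}$ are additive, whence so is $\Phi$. Using the associativity of multiplication in $\mbb{KR}$ together with \eqref{KRPointDistr} one checks that $\phi(\lambda\mbf{x}) = \lambda\,\phi(\mbf{x})$ for every $\lambda\geq 0$; combined with the positive homogeneity of $\iota$ and $\iota^{-1}$ from Proposition~\ref{HomogePropo}, this yields the positive homogeneity of $\Phi$. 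An additive map is automatically odd, so $\Phi(-x) = -\Phi(x)$, and together with positive homogeneity this upgrades to full homogeneity $\Phi(\lambda x) = \lambda\,\Phi(x)$ for all real $\lambda$. Hence $\Phi$ is linear.

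For the explicit matrix under the standard immersion I would compute the endpoints of $\phi(\mbf{x}) = Q\mbf{x}$ directly, the crucial device being the positive/negative part decomposition $q = q^+ - q^-$ of Definition~\ref{RealPosNegParts}. From the scalar-multiplication rule \eqref{ScalarMult}, a short case analysis on the sign of $q$ collapses into the single pair of endpoint formulas
\begin{equation*}
\un{q\,\mbf{a}} = q^+\un{\mbf{a}} - q^-\ov{\mbf{a}},
\qquad
\ov{q\,\mbf{a}} = q^+\ov{\mbf{a}} - q^-\un{\mbf{a}}.
\end{equation*}
Because addition in $\mbb{KR}$ acts separately on the two endpoints, summing these over $j$ gives
\begin{equation*}
\un{(Q\mbf{x})}_i = \sum_j\bigl(q_{ij}^+\un{\mbf{x}}_j - q_{ij}^-\ov{\mbf{x}}_j\bigr),
\qquad
\ov{(Q\mbf{x})}_i = \sum_j\bigl(q_{ij}^+\ov{\mbf{x}}_j - q_{ij}^-\un{\mbf{x}}_j\bigr).
\end{equation*}

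Finally I would push these through the standard immersion \eqref{StandardImmersion}. Writing $y = \mathrm{sti}(\mbf{x})$, so that $\un{\mbf{x}}_j = -y_j$ and $\ov{\mbf{x}}_j = y_{n+j}$, and $z = \mathrm{sti}(\phi(\mbf{x}))$, the defining relations $z_i = -\un{(Q\mbf{x})}_i$ and $z_{n+i} = \ov{(Q\mbf{x})}_i$ substitute to give $z_i = \sum_j(q_{ij}^+ y_j + q_{ij}^- y_{n+j})$ and $z_{n+i} = \sum_j(q_{ij}^- y_j + q_{ij}^+ y_{n+j})$. These are precisely the upper and lower block-rows of \eqref{BlockForm}, identifying the matrix of $(\mathrm{sti}\circ\phi\circ\mathrm{sti}^{-1})$ as the claimed block matrix. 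The only delicate point I anticipate is the sign bookkeeping forced by the negated lower endpoints of the standard immersion: the two sign flips, on the preimage through $\un{\mbf{x}}_j = -y_j$ and on the image through $z_i = -\un{(Q\mbf{x})}_i$, conspire to deposit $+Q^-$, rather than $-Q^-$, in the off-diagonal blocks, and the $q^+/q^-$ splitting is exactly what keeps this transparent while merging the sign cases of \eqref{ScalarMult} into one. As an alternative route for the general statement, once the block matrix \eqref{BlockForm} is known one may instead invoke Proposition~\ref{ImmersPropo} to write any immersion as $T\circ\mathrm{sti}$; denoting that matrix by $M$, the induced map for $T\circ\mathrm{sti}$ becomes $T\,M\,T^{-1}$, whose linearity is immediate.
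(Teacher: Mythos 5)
Your proposal is correct and follows essentially the same route as the paper: linearity via additivity (from point-scalar distributivity and property (1) of an immersion) plus positive homogeneity (Proposition~\ref{HomogePropo}) upgraded to full homogeneity through oddness, and the block form via the endpoint identities $\un{q\,\mbf{a}} = q^{+}\un{\mbf{a}} - q^{-}\ov{\mbf{a}}$, $\ov{q\,\mbf{a}} = q^{+}\ov{\mbf{a}} - q^{-}\un{\mbf{a}}$ pushed through the standard immersion. You merely make explicit the sign bookkeeping that the paper leaves as ``a consequence of the definition'', and your observation that additivity alone already gives $\Phi(-x)=-\Phi(x)$ slightly shortens the paper's computation with ``opp''.
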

  
Previously, such matrices were called ``concomitant matrices'' (see e.\,g. 
\cite{SharySurvey}, but then it turned out that this term was already used 
in mathematics and has a different meaning. 
  
\bigskip 
\begin{proof}
In order to substantiate the first statement of the Theorem, we need to show 
the \emph{additivity} and \emph{homogeneity} of the mapping 
\begin{equation*}
\iota\circ\phi\circ\iota^{-1}: \mbb{R}^{2n}\to\mbb{R}^{2n}.
\end{equation*}
  
The additivity of $\phi$ immediately follows from the distributivity relation 
\begin{equation*} 
q\cdot (\mbf{x} + \mbf{y}) = q\cdot\mbf{x} + q\cdot\mbf{y}, 
\end{equation*} 
which is valid for any intervals $\mbf{x}$, $\mbf{y}\in\mbb{KR}$ provided that $q$ is 
a point (non-interval) value. The immersion $\iota$ and its inverse mapping $\iota^{-1}$ 
are also additive. Consequently, the induced mapping $\;\iota\circ\phi\circ\iota^{-1}$ 
is additive as a composition of additive ones. 
  
Next, the operator $\phi$ of multiplication by a matrix is homogeneous, and both 
the immersion $\iota$ and its inverse mapping $\iota^{-1}$ are positively homogeneous 
due to Proposition~\ref{HomogePropo}. Therefore, the composition $\;\iota\circ\phi 
\circ\iota^{-1} $ is at least positively homogeneous. In addition, for any 
$x\in\mbb{R}^{2n}$, there holds
\begin{align*}
(\,\iota\circ\phi\circ\iota^{-1}) (-x) \ & = \ (\,\iota\circ\phi\,) 
 (\opp\iota^{-1}(x)) \qquad \mbox {by \eqref{Opposite2}}      \\[2mm]
& = \ \iota\,\bigl(\, \opp (\,\phi\circ\iota^{-1}) (x) \,\bigr) \qquad
\mbox{\begin{tabular}{c}
      according\\
      with definitions \\
      \eqref{ScalarMult} and \eqref{Opposite}
      \end{tabular}} \\[2mm]
& = \ -(\,\iota\circ\phi\circ\iota^{-1}) (x) \qquad
 \mbox{due to \eqref{Opposite1}},
\end{align*}
which proves the homogeneity of the induced mapping $\;\iota\circ\phi\circ\iota^{-1}$ 
with respect to multiplication on any scalars. 
  
The second statement of the Theorem is a consequence of the definition of the standard 
immersion \eqref{StandardImmersion} and the rule of multiplying a number by the interval 
\begin{equation*}
q\cdot\mbf{x} 
 \ = \
\begin{cases}
\ [\,q\,\un{\mbf{x}}, q\,\ov{\mbf{x}}\,],& \mbox{ if }\ q\geq 0,\\[2mm]
\ [\,q\,\ov{\mbf{x}}, q\,\un{\mbf{x}}\,],& \mbox{ otherwise },
\end{cases}
\end{equation*}
for which we can give the following convenient equivalent form 
\begin{equation*}
\left\{\
\arraycolsep 2pt
\begin{array}{rcl}
\un{\, q\cdot\mbf{x} \,}
 &=& \phantom{-} q^+\,\un{\mbf{x}} - q^-\,\ov{\mbf{x}},\\[2mm]
\ov{\, q\cdot\mbf{x} \,}
 &=& - q^-\,\un{\mbf{x}} + q^+\,\ov{\mbf{x}}.
\end{array}
\right.
\end{equation*} 
\end{proof}
  
The block $2n\!\times\!2n$-matrix from Theorem~\arabic{MatrixTheo} is so important 
in the theory we develop that we will take a special notation and term for it. 
  
\begin{definition}
\label{ExtCoeffMatrix}
If $Q$ is a point $n\times n$-matrix, we set 
\begin{equation*}
\arraycolsep 3mm
\renewcommand{\arraystretch}{1.5}
Q\tl :=
\left( \begin{array}{c|c}
Q^+ & Q^- \\  \hline
Q^- & Q^+
\end{array} \right)
\tag{\ref{BlockForm}}
\end{equation*}
and will refer to the point $2n\times 2n$-matrix $Q\tl$ as \textsl{extended 
multiplier matrix} for $Q$. 
\end{definition}
  
An important feature of the extended multiplier matrix $Q\tl\in\mbb{R}^{2n\times 2n}$ 
is that it is always \emph{non-negative}: such a matrix must match a ``$\leq$''-isotonic 
operator on $\mbb{R}^{2n}$ induced by multiplying by $Q$ in the interval space 
$\mbb{KR}^n$, which is inclusion isotonic.

\begin{corollary} from Theorem~\arabic{MatrixTheo}. 
Using the definition of the induced map, it is easy to conclude that, for any point 
$n\times n$-matrix $Q$ and any $x\in\mbb{R}^{2n}$, there holds 
\begin{equation}
\label{SigmaProperty2}
\sti\bigl(\,Q\cdot\sti^{-1}(x)\,\bigr) = Q\tl x.
\end{equation} 
Similarly, for any point $n\times n$-matrix $Q$ and any interval vector 
$\mbf{x}\in\mbb{KR}^n$, we have 
\begin{equation}
\label{SigmaProperty3}
Q\mbf{x} = \sti^{-1}\bigl(\,Q\tl\cdot\sti(\mbf{x})\,\bigr)
\end{equation} 
(The relations \eqref{SigmaProperty2} and \eqref{SigmaProperty3} are illustrated 
by the commutative diagram shown in Fig.~\ref{ComDiagram}). 
\end{corollary}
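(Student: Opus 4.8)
The plan is to observe that both identities are immediate unwindings of Theorem~\arabic{MatrixTheo}, which has already identified the induced mapping $\sti\circ\phi\circ\sti^{-1}$, where $\phi(\mbf{x})=Q\mbf{x}$, with left multiplication by the extended multiplier matrix $Q\tl$ on $\mbb{R}^{2n}$. So no fresh computation is needed: the work is purely a matter of substituting definitions and exploiting the bijectivity of $\sti$.

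First I would establish \eqref{SigmaProperty2}. By the definition \eqref{InducedMapping} of the induced map, for every $x\in\mbb{R}^{2n}$ one has $(\sti\circ\phi\circ\sti^{-1})(x)=\sti\bigl(\phi(\sti^{-1}(x))\bigr)=\sti\bigl(Q\cdot\sti^{-1}(x)\bigr)$. Theorem~\arabic{MatrixTheo} asserts that this same quantity equals $Q\tl x$, and equating the two expressions yields \eqref{SigmaProperty2} at once.

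Next I would derive \eqref{SigmaProperty3} from \eqref{SigmaProperty2} by the particular choice $x=\sti(\mbf{x})$ for an arbitrary interval vector $\mbf{x}\in\mbb{KR}^n$. Since $\sti$ is an immersion, and therefore a bijection (Definition~\arabic{ImmersDefi}), we have $\sti^{-1}(\sti(\mbf{x}))=\mbf{x}$, so the left-hand side of \eqref{SigmaProperty2} collapses to $\sti(Q\mbf{x})$, giving $\sti(Q\mbf{x})=Q\tl\cdot\sti(\mbf{x})$. Applying $\sti^{-1}$ to both sides and invoking $\sti^{-1}\circ\sti=\mathrm{id}$ once more then delivers \eqref{SigmaProperty3}.

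The main obstacle is, frankly, that there is essentially none: the mathematical content lives entirely in Theorem~\arabic{MatrixTheo}, and what remains is bookkeeping. The only points that require a moment of care are keeping straight which of the identities $\sti\circ\sti^{-1}=\mathrm{id}_{\mbb{R}^{2n}}$ and $\sti^{-1}\circ\sti=\mathrm{id}_{\mbb{KR}^n}$ is being used at each step, and confirming that the equality of the induced map with $Q\tl$ holds for \emph{all} $x\in\mbb{R}^{2n}$, not merely on the image of $\sti$ --- which is precisely what the linearity conclusion of the Theorem guarantees. The commutative diagram of Fig.~\ref{ComDiagram} offers a visual check that the two routes agree.
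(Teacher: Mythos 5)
Your proposal is correct and follows exactly the route the paper intends: the paper offers no separate argument beyond ``using the definition of the induced map,'' and your unwinding of $(\sti\circ\phi\circ\sti^{-1})(x)=Q\tl x$ into \eqref{SigmaProperty2}, followed by the substitution $x=\sti(\mbf{x})$ and an application of $\sti^{-1}$ to obtain \eqref{SigmaProperty3}, is precisely that bookkeeping made explicit.
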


  
\subsection{Absolutely regular matrices} 
\label{AbsRegularSubse}

\begin{theorem}
\setcounter{AbsRegTheo}{\value{theo}}
For a point $n\times n$-matrix $Q$, the following conditions are equivalent: 
\begin{itemize}
\item[\normalfont(a)] for any $x\in\mbb{KR}^n$, $Q\mbf{x} = 0$ \,if and only 
  if\, $\mbf{x} = 0$; 
\item[\normalfont(b)] the matrix $Q\tl\in\mbb{R}^{2n\times 2n}$, extended multiplier 
  matrix for $Q$, is regular; 
\item[\normalfont(c)] both the matrix $Q$ and its absolute value $|Q|$ 
  (i.\,e. the matrix \\*  formed by moduli of the elements of $Q$) are regular. 
\end{itemize}
\end{theorem}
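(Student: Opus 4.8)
The plan is to establish the two equivalences (a)$\,\Leftrightarrow\,$(b) and (b)$\,\Leftrightarrow\,$(c) separately; chained together they yield the equivalence of all three conditions. The whole argument rests on translating the problem from $\mbb{KR}^n$ to $\mbb{R}^{2n}$ through the standard immersion, and then on a single determinant factorization for the extended multiplier matrix.

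For (a)$\,\Leftrightarrow\,$(b), I would use the correspondence supplied by the Corollary to Theorem~\arabic{MatrixTheo}. Since $\sti$ is a bijection sending $0$ to $0$, relation \eqref{SigmaProperty3} gives, upon setting $y=\sti(\mbf{x})$,
\begin{equation*}
Q\mbf{x}=0 \;\Longleftrightarrow\; \sti^{-1}\bigl(Q\tl\,y\bigr)=0 \;\Longleftrightarrow\; Q\tl\,y=0 .
\end{equation*}
As $\mbf{x}$ runs over $\mbb{KR}^n$, its image $y$ runs over all of $\mbb{R}^{2n}$, with $\mbf{x}=0$ exactly when $y=0$. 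The converse implication in (a) is automatic because $Q\cdot 0=0$, so the substantive content of (a) is $Q\mbf{x}=0\Rightarrow\mbf{x}=0$, which now reads $\ker Q\tl=\{0\}$. For a square matrix this is precisely the regularity of $Q\tl$, i.e. condition (b).

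For (b)$\,\Leftrightarrow\,$(c), the key observation is that the block structure \eqref{BlockForm} can be diagonalized. Writing $A=Q^+$ and $B=Q^-$, so that $A+B=|Q|$ and $A-B=Q$, I would apply the similarity transformation with $T=\left(\begin{smallmatrix} I & I \\ I & -I\end{smallmatrix}\right)$, which satisfies $T^2=2I$ and is therefore invertible with $T^{-1}=\tfrac12 T$. A direct block multiplication gives
\begin{equation*}
T^{-1}\,Q\tl\,T \;=\; \begin{pmatrix} |Q| & 0 \\ 0 & Q \end{pmatrix},
\end{equation*}
and no commutation of the blocks is required. Taking determinants of similar matrices yields $\det Q\tl=\det|Q|\cdot\det Q$, so $Q\tl$ is regular if and only if both $|Q|$ and $Q$ are regular, which is exactly (c).

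The auxiliary facts I rely on --- that $\sti$ is a zero-preserving bijection onto $\mbb{R}^{2n}$, and the explicit $2\times 2$ block product producing the diagonal form --- are routine. I expect the one genuinely substantive step to be recognizing the block-diagonalization behind $\det Q\tl=\det Q\cdot\det|Q|$; once this identity is available, (b)$\,\Leftrightarrow\,$(c) is immediate, and (a)$\,\Leftrightarrow\,$(b) follows formally from the immersion correspondence \eqref{SigmaProperty3}.
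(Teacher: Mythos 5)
Your proposal is correct and follows essentially the same route as the paper: the equivalence (a)$\,\Leftrightarrow\,$(b) via the immersion identity \eqref{SigmaProperty3}, and (b)$\,\Leftrightarrow\,$(c) via the factorization $\det Q\tl=\det Q\cdot\det|Q|$. The only (harmless) variation is in how that factorization is obtained: the paper adds block rows and subtracts block columns to reach the block-triangular form $\left(\begin{smallmatrix} Q & Q^- \\ 0 & |Q|\end{smallmatrix}\right)$, whereas you conjugate by $T=\left(\begin{smallmatrix} I & I \\ I & -I\end{smallmatrix}\right)$ to reach the block-diagonal form $\diag\{|Q|,\,Q\}$ --- your block computation checks out and indeed needs no commutativity of $Q^+$ and $Q^-$.
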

  
\begin{proof}
The equivalence ``(a) $\Leftrightarrow$ (b)'' is a consequence of representation 
\eqref{SigmaProperty3}. 
  
To prove the equivalence of conditions (b) and (c), we perform the following 
transformations with the extended multiplier matrix $Q\tl$. We add its first row 
to the $(n+1)$-th row, then we add its second row to the $(n+2)$-th row, and so on, 
up to the $n$-th row, which we add the $2n$-th row of the matrix $Q\tl$. Insofar as 
\begin{equation*} 
q^+ + q^- = |q| 
\end{equation*} 
for any real number $q$, then the result of our transformation will be the following 
$2n\times 2n$-matrix: 
\begin{equation}
\label{ModulForm}
\arraycolsep 2mm
\renewcommand{\arraystretch}{1.5}
\left( \begin{array}{ll}
\ \phantom{i}Q^+ & \phantom{i}Q^- \\
\ |Q| & |Q|
\end{array} \right).
\end{equation}
  
Next, we subtract the $(n+1)$-th column of the matrix \eqref{ModulForm} from its first 
column, then we subtract the $(n+2)$-th column from its second column, and so on, up to 
the $2n$-th column which we subtract from the $n$-th column of \eqref{ModulForm}. Since 
\begin{equation*} 
q^+ - q^- = q
\end{equation*} 
for any real number $q$, then we get a block-triangular $2n\times 2n$-matrix 
\begin{equation}
\label{OrigForm}
\arraycolsep 2mm
\renewcommand{\arraystretch}{1.5}
\left( \begin{array}{ll}
\ Q & \phantom{i}Q^- \\[1mm]
\ \raisebox{-0.3ex}{\Large 0} & |Q|
\end{array} \right).
\end{equation}
  
As is known from matrix theory, the transformations we have done (linear combination 
of rows and columns) do not change the property of a matrix to be regular or singular 
(see, for example, \cite{Gantmacher}). Consequently, the matrix \eqref{OrigForm} is 
regular or singular along with the extended multiplier matrix $Q\tl$. But due to 
the special form of the matrix \eqref{OrigForm}, its determinant is equal to the product 
of the determinants of the diagonal blocks, i.\,e. determinants of the matrices $Q$ and 
$|Q|$. Thus, the extended multiplier matrix $Q\tl$ is regular indeed if and only if 
both matrices $Q$ and $|Q|$ are regular (non-singular). 
\end{proof}
  
We have already noted that the regularity (non-singularity) of the point matrix $Q$ 
in the sense of classical linear algebra does not necessarily imply that the corresponding 
multiplication operator by $Q$ in $\mbb{KR}^n$ is invertible. But now the phenomenon of 
the matrix \eqref{BadMatrix} and its like is fully explained: although such matrices 
themselves may be non-singular, but multiplying by them corresponds, after immersion 
in linear space, to multiplication by singular extended multiplier matrices. For example, 
for the matrix \eqref{BadMatrix}, the extended multiplier matrix is 
\begin{equation}
\label{ConcomBadMatrix}
\left(
\begin{array}{cccc}
1 & 1 & 0 & 0\\
0 & 1 & 1 & 0\\
0 & 0 & 1 & 1\\
1 & 0 & 0 & 1
\end{array}
\right),
\end{equation}
and its determinant is zero!

The result of Theorem~\arabic{AbsRegTheo} allows us to state the following 
  
\begin{definition} 
\setcounter{AbsRegDefi}{\value{defi}}
A point $n\times n$-matrix $Q$ that satisfies any one (and, hence, every one) of 
the equivalent conditions \textnormal{(a)--(c)} of Theorem~{\rm\arabic{AbsRegTheo}}, 
is called \textsl{absolutely regular} (\textsl{absolutely non-singular}). 
\end{definition}
  
For example, the identity matrix is absolutely regular, whereas the matrix 
\eqref{BadMatrix} is regular in the usual sense, but not absolutely regular. 
It is also obvious that if a matrix is singular in the usual sense, then it is 
certainly not absolutely regular. All non-negative regular matrices are also 
absolutely regular. A practically convenient criterion for checking the absolute 
regularity of a matrix is provided by condition (c) from Theorem~\arabic{AbsRegTheo}. 
For example, instead of calculating the determinant of the extended multiplier 
matrix \eqref{ConcomBadMatrix} for the matrix \eqref{BadMatrix}, one could just 
notice that the moduli matrix 
\begin{equation*}
\left(
\begin{array}{rr}
1 & 1\\
1 & 1
\end{array}
\right) 
\end{equation*}
is singular. 
  
The class of matrices, determined by the condition of Definition~\arabic{AbsRegDefi} 
and equivalent to it, was introduced by the author in the works of the past century 
\cite{Shary-95, Shary-98}. But the term ``absolutely regular matrix'' was coined 
relatively recently, already in the 2000s. Previously, these matrices were called 
\emph{$\iota$-regular} (\emph{$\iota$-nonsingular}), and then \emph{completely regular} 
(\emph{completely nonsingular}) matrices \cite{SharySurvey}. A long and nontrivial way 
indeed, but the term ``absolutely regular matrix'' seems to be the most adequate to 
the meaning of the concept. 
  
\bigskip
\begin{corollary} from Theorems~\arabic{MatrixTheo} and \arabic{AbsRegTheo}. 
The operator $\phi:\mbb{KR}^n\to\mbb{KR}^n$ determined by the multiplication 
by a square point matrix in $\mbb{KR}^n$, i.\,e. such that 
\begin{equation*} 
\phi(\mbf{x}) = Q\mbf{x} \qquad
\mbox{ for some }\ Q\in\mbb{R}^{n\times n},
\end{equation*} 
is invertible if and only if the matrix $Q$ is absolutely regular. Then the inverse 
operator $\phi^{-1}: \mbb{KR}^n \to\mbb{KR}^n$ acts, according to \eqref{InvMap}, 
as follows: 
\begin{equation}
\label{InverseOperator}
\phi^{-1}(\mbf{x}) \ = \ \sti^{-1}\bigl((\,Q\tl)^{-1}\cdot\sti\,(\mbf{x})\bigr).
\end{equation}
\end{corollary}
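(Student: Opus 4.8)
The plan is to transfer the whole question to the induced linear map and then read off the answer from the two preceding theorems. By Theorem~\arabic{MatrixTheo}, the map $\Phi := \sti\circ\phi\circ\sti^{-1}$ induced by $\phi(\mbf{x}) = Q\mbf{x}$ is a genuine \emph{linear} transformation of $\mbb{R}^{2n}$, and its matrix is exactly the extended multiplier matrix $Q\tl$ of \eqref{BlockForm}. Since the standard immersion $\sti$ is a bijection of $\mbb{KR}^n$ onto $\mbb{R}^{2n}$, the general principle recorded in \eqref{InvMap} applies: $\phi$ is invertible as a self-map of $\mbb{KR}^n$ if and only if $\Phi$ is invertible as a self-map of $\mbb{R}^{2n}$, and in that case $\phi^{-1} = \sti^{-1}\circ\Phi^{-1}\circ\sti$.

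Next I would run the equivalence chain. Because $\Phi$ is the finite-dimensional linear transformation $x\mapsto Q\tl x$, its bijectivity is equivalent to the non-singularity (regularity) of the point matrix $Q\tl\in\mbb{R}^{2n\times 2n}$. By condition (b) of Theorem~\arabic{AbsRegTheo} --- equivalently, by Definition~\arabic{AbsRegDefi} --- regularity of $Q\tl$ is precisely the absolute regularity of $Q$. Chaining these arrows gives $\phi$ invertible $\Longleftrightarrow$ $\Phi$ invertible $\Longleftrightarrow$ $Q\tl$ regular $\Longleftrightarrow$ $Q$ absolutely regular, which is the first assertion of the Corollary.

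For the explicit formula, assume $Q$ absolutely regular, so that $Q\tl$ is invertible. Then $\Phi^{-1}$ is simply the linear map $x\mapsto(Q\tl)^{-1}x$. Substituting $\iota = \sti$ into \eqref{InvMap}, with $\Phi^{-1}$ being multiplication by $(Q\tl)^{-1}$, yields, for every $\mbf{x}\in\mbb{KR}^n$,
\[ \phi^{-1}(\mbf{x}) = \sti^{-1}\bigl((Q\tl)^{-1}\cdot\sti(\mbf{x})\bigr), \]
which is the claimed representation \eqref{InverseOperator}; it is also consistent with the identity \eqref{SigmaProperty3}.

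The main obstacle here is not computational --- the Corollary is essentially a bookkeeping assembly of the two theorems together with the invertibility principle \eqref{InvMap}. The only point demanding care is to pin down the meaning of ``invertible'' for $\phi$: it is bijectivity of the set-map $\mbb{KR}^n\to\mbb{KR}^n$, and I must make sure this matches bijectivity of $\Phi$ under the bijection $\sti$, and then that bijectivity of the finite-dimensional linear $\Phi$ is equivalent to regularity of its matrix $Q\tl$ (and not merely to some weaker one-sided condition). Once these identifications are made explicit, every arrow in the chain is forced, and the well-definedness of $\sti^{-1}\bigl((Q\tl)^{-1}\cdot\sti(\mbf{x})\bigr)$ follows because $\sti$ maps $\mbb{KR}^n$ \emph{onto} all of $\mbb{R}^{2n}$.
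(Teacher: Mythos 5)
Your proposal is correct and follows exactly the route the paper intends: the Corollary is obtained by combining Theorem~\arabic{MatrixTheo} (the induced map is linear with matrix $Q\tl$), the invertibility transfer principle \eqref{InvMap} under the bijection $\sti$, and condition (b) of Theorem~\arabic{AbsRegTheo}. The paper gives no separate proof beyond citing these ingredients, and your chain of equivalences together with the substitution into \eqref{InvMap} is precisely that argument.
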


\bigskip
Despite the existence of the explicit formula \eqref{InverseOperator}, an operator 
that is inverse to the operator of multiplication by a point $n\times n$-matrix $Q$ 
in $\mbb{KR}^n$ cannot generally be expressed through multiplication by a matrix from 
$\mbb{KR}^n$ (in particular, by the matrix $Q^{-1}$). This follows, for example, from 
the fact that the matrix $(Q\tl)^{-1} $ does not need to be non-negative, and then 
the inverse operator is not monotonic with respect to inclusion in $\mbb{KR}^n$. 
But multiplication by a point matrix in $\mbb{KR}^n $ is always monotone with respect 
to inclusion. 
  
The above Corollary is formulated in an abstract manner, but it has quite practical 
appli\-cations. Using formula \eqref{InverseOperator}, one can easily find formal 
solutions to interval systems of linear equations in which the square matrix is 
a point absolutely regular matrix, and only the right-hand side vector is interval, 
i.\,e. of the form 
\begin{equation} 
\label{PointILinSys} 
Ax = \mbf{b} \qquad A\in\mbb{R}^{n\times n}, \;\mbf{b}\in\mbb{KR}^n. 
\end{equation} 
  
Consider, for example, the interval linear system 
\begin{equation} 
\label{SamplePointSys} 
\left( 
\begin{array}{@{\;}cc@{\;}} 
 1 & 2 \\[2mm] 
-3 & 4 
\end{array} 
\right) 
\,
\begin{pmatrix}
x_{1} \\[2mm] x_{2} 
\end{pmatrix} 
= 
\left( 
\begin{array}{@{\;}c@{\;}} 
{[0, 10]} \\[2mm] 
{[10, 20]} 
\end{array} 
\right) 
\end{equation} 
First of all, we note that the matrix of this system is point (non-interval) and 
absolutely regular, since both the matrix itself and its modulus are regular: 
\begin{equation*} 
\det\left( 
\begin{array}{@{\;}cc@{\;}} 
 1 & 2 \\[2mm] 
-3 & 4 
\end{array} 
\right) 
\neq 0 
\qquad\text{ and }\qquad 
\det\left( 
\begin{array}{@{\;}cc@{\;}} 
 1 & 2 \\[2mm] 
 3 & 4 
\end{array} 
\right) 
\neq 0. 
\end{equation*} 
Then the multiplication operator by this matrix is invertible in the interval space 
$\mbb{KR}^n$, and its inverse is given by formula \eqref{InverseOperator}. To apply 
it for computing formal solutions to \eqref{SamplePointSys}, we construct the extended 
multiplier matrix 
\begin{equation*} 
A\tl = 
\left( 
\begin{array}{@{\;}cc@{\;}} 
A^{+} & A^{-} \\[4pt]
A^{-} & A^{+} 
\end{array} 
\right) 
= 
\left( 
\begin{array}{@{\;}cc|cc@{\;}} 
 1 & 2 & 0 & 0 \\[3pt] 
 0 & 4 & 3 & 0 \\ 
 \hline 
 0 & 0 & 1 & 2 \\[3pt] 
 3 & 0 & 0 & 4 
\end{array} 
\right), 
\end{equation*} 
and then multiply its inverse by the result of the standard immersion ``sti'' 
acting on the right-hand side vector of \eqref{SamplePointSys}, that is, 
by $(0, -10, 10, 20)^\top$: 
\begin{equation*} 
\bigl(A\tl\bigr)^{-1} 
\left( 
\begin{array}{@{\;}c@{\;}} 
0\\[2pt] -10\\[2pt] 10\\[2pt] 20
\end{array} 
\right) 
= 
\left( 
\begin{array}{@{\;}c@{\;}} 
-4\\[2pt]  2\\[2pt] -6\\[2pt]  8
\end{array} 
\right) 
\end{equation*} 
Then the formal solution for system \eqref{SamplePointSys} is 
\begin{equation*} 
\sti^{-1} 
\left( 
\begin{array}{@{\;}c@{\;}} 
-4\\[2pt]  2\\[2pt] -6\\[2pt] 8 
\end{array} 
\right) = 
\left( 
\begin{array}{@{\;}c@{\;}} 
{[4, -6]}\\[3pt]  {[-2, 8]} 
\end{array} 
\right) 
\end{equation*} 
By direct substitution, we can verify that the resulting interval vector is indeed 
a formal solution of system \eqref{SamplePointSys}. 
  
S.\,Markov proposed in \cite{Markov} an equivalent method for finding formal solutions 
to interval linear systems with point (non-interval) matrices, i.\,e. of the form 
\eqref{PointILinSys}. Its essence, if we give up unnecessary abstractions, is 
as follows. 
  
If $\mbf{x}^\ast\in\mbb{KR}^n$ is a formal solution to system \eqref{PointILinSys}, 
then, taking the midpoint and radius of both sides of the equality 
\begin{equation*} 
A\mbf{x}^\ast = \mbf{b}, 
\end{equation*} 
we get 
\begin{eqnarray*} 
&& A\cdot\m\mbf{x}^\ast = \m\mbf{b}, \\[3pt] 
&& |A|\cdot\r\mbf{x}^\ast = \r\mbf{b}. 
\end{eqnarray*} 
Thus, computing the formal solution is reduced to the solution of two ordinary systems 
of linear equations with the matrices $A$ and $|A|$, from which we find the middle of 
the formal solution $\mbf{x}^\ast$ and its radius. It is clear that a sufficient 
condition for the existence of these solutions is the non-singularity of both matrices 
$A$ and $|A|$. If we want the solvability of these systems of equations for the middle 
and radius of the formal solution for any right-hand sides $\mbf{b}$, then the matrix 
$A$ must be absolutely regular.

  
\section{The subdifferential Newton method}

In this section of the paper, we briefly present results on the \emph{subdifferential 
Newton method} for finding formal solutions of interval linear systems. 
  
Recall some definitions and facts from convex analysis \cite{LemarHirUrr,Rockafellar}. 
Let $U, V$ be real linear spaces, and $V$ be an ordered linear space with an order 
``$\preceq$''. The mapping $F: U\rightarrow V$ is called \emph{order convex} with 
respect to ``$\preceq$'', if 
\begin{equation*} 
F(\lambda x + (1-\lambda)y)\preceq\lambda F(x) + (1-\lambda) F(y)
\end{equation*} 
for any $x,\,y\in U$ and $\lambda\in\;]0,1[$. \emph{Subdifferential} of the order convex 
mapping $F: U\rightarrow V$ at the point $x$ is the set $\partial_\preceq F(x)$ of all 
linear operators $D: U\rightarrow V$ such that 
\begin{equation*} 
D(y-x)\preceq F(y) - F(x)
\end{equation*} 
for any $y\in U$. It is known that for each interior point $x$ from the effective domain 
of definition of the order convex function $F$, the subdifferential is a non-empty set, 
whose elements are called \emph{subgradient} of $F$ at $x$. 
  
Suppose that the interval matrix $\mbf{A}$ is such that in each of its rows all elements 
are either only proper or only improper, and let the sets of natural numbers $I '= \{\, 
i'_{1}, i'_{2}, \ldots, i'_{\alpha}\,\}$ and $I'' = \{\, i''_{1}, i''_{2}, \ldots, 
i''_{\beta} \,\}$, $I'\cap I'' = \varnothing$, $\alpha + \beta = n$, represent row 
indices of $\mbf{A}$ with proper and improper intervals, respectively: 
\begin{equation*} 
\mbf{a}_{ij} \ \mbox{ is } \
\left\{ \
\begin{array}{ll}
\mbox{ proper,}  &\mbox{if} \ i\in I' \\[1mm]
\mbox{ improper,}&\mbox{if} \ i\in I''.
\end{array}
\right.
\end{equation*} 
If we define on $\mbb{R}^{2n}$ a partial order ``$\sqsubseteq$'' as follows 
\begin{equation*} 
x\sqsubseteq y \ \Longleftrightarrow \
\left\{ \
\begin{array}{ll}
\ x_i\leq y_i & \mbox{for} \  i\in\{\,i'_1, \ldots, i'_\alpha,
  i'_{1} + n, \ldots, i'_{\alpha} + n\,\}, \\[1mm]
\ x_i\geq y_i & \mbox{for} \  i\in\{\,i''_1, \ldots, i''_\beta,
  i''_{1} + n, \ldots, i''_{\beta} + n\,\},
\end{array}
\right.
\end{equation*} 
then, as a consequence of the sub- and superdistributive properties of Kaucher 
arithmetic, the induced mapping $\Phi(x)$, determined by \eqref{InducedMapping} for 
the interval mapping $\mbf{x}\mapsto\mbf{Ax}\ominus\mbf{b}$, is order convex with 
respect to ``$\sqsubseteq$''. Accordingly, at any point $x\in\mbb{R}^{2n}$ there will 
be a non-empty subdifferential $\partial_\sqsubseteq\Phi(x)$, easily computable, 
because $\Phi$ is a polyhedral (piecewise affine) mapping. The \emph{subdifferential 
Newton method} proved to be an efficient tool for finding formal solutions to such 
interval linear algebraic systems. It is a further development of the well-known 
results on monotonically convergent Newton-type methods in ordered linear spaces. 
  
\bigskip
\begin{center}
\noindent{\sf The subdifferential Newton method with a special starting vector}\\[2mm]
\fbox{ \
\centering
\begin{minipage}{150mm}
\medskip\par
As a starting approximation of $x^{(0)}\in\mbb{R}^{2n}$, we take the solution of 
the ``midpoint'' $2n\!\times\!2n$-system 
\begin{equation*} 
(\m\mbf{A})\tl\,x = \sti(\mbf{b}).
\end{equation*} 
If the $k$-th approximation $x^{(k)}\in\mbb{R}^{2n}$, $k = 0,1, \ldots\,$, is already 
known, then we find some subgradient $D^{(k)}\in\partial_\sqsubseteq \Phi(\,x^{(k)})\,$, 
$D\in\mbb{R}^{2n\times 2n}$, and then take 
\begin{equation*} 
x^{(k+1)} \gets x^{(k)} - \tau\left(\,D^{(k)}\right)^{-1} 
\left(\Phi(\,x^{(k)})\right).
\bigskip
\end{equation*} 
\end{minipage} \ }
\end{center}
\bigskip
  
In the above pseudocode, $\tau$ is a damping factor from $]0, 1]$. There holds 
  
\begin{theorem} 
If, in each row of the interval matrix $\mbf{A}$, all elements are only proper 
or only improper, the proper projection $\pro\mbf{A}$ contains only absolutely 
regular point matrices and is sufficiently narrow (i.\,e. $\|\r\mbf{A}\|$ is small 
enough), then the subdifferential Newton method converges in $\mbb{R}^{2n}$ to 
$\sti(\mbf{x}^*)$, where $\mbf{x}^*$ is a formal solution of the system 
$\mbf{Ax} = \mbf{b}$ and ``sti'' is the standard immersion of $\mbb{KR}^n$ 
to $\mbb{R}^{2n}$. 
\end{theorem}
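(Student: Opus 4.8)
The plan is to transfer the problem through the standard immersion and prove convergence as a monotone Newton-type process for the induced mapping. Writing $\Phi=\sti\circ\mbf{f}\circ\sti^{-1}$ for $\mbf{f}(\mbf{x})=\mbf{Ax}\ominus\mbf{b}$, the iteration solves $\Phi(y)=0$ in the ordered space $(\mbb{R}^{2n},\sqsubseteq)$. I would lean on the two structural facts already in hand: $\Phi$ is order-convex with respect to the mixed order $\sqsubseteq$ (obtained from the sub- and superdistributivity of $\mbb{KR}$), and $\Phi$ is polyhedral, so $\partial_\sqsubseteq\Phi(x)$ is nonempty and piecewise constant.

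First I would describe the subgradients explicitly. On each linearity cell the signs of the components $\mbf{x}_1,\dots,\mbf{x}_n$ are fixed, so every interval product $\mbf{a}_{ij}\mbf{x}_j$ linearizes to multiplication by a selected endpoint of $\mbf{a}_{ij}$. Tracking endpoints through \eqref{StandardImmersion} shows that each $D\in\partial_\sqsubseteq\Phi(x)$ is a non-negative block $2n\times2n$-matrix of the form $\left(\begin{smallmatrix}Q_1^+&Q_1^-\\Q_2^-&Q_2^+\end{smallmatrix}\right)$ with $Q_1,Q_2\in\pro\mbf{A}$ (independent selections for the ``lower'' and ``upper'' rows), which collapses to the extended multiplier matrix $Q\tl$ of \eqref{BlockForm} precisely when $\mbf{A}$ degenerates to a point matrix $Q$, in agreement with the correspondence \eqref{SigmaProperty3}.

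Next I would use narrowness to guarantee that the iteration is well defined and that a solution exists. As $\|\r\mbf{A}\|\to0$ both selections satisfy $Q_1,Q_2\to\m\mbf{A}$, hence every subgradient $D\to(\m\mbf{A})\tl$; since $\m\mbf{A}\in\pro\mbf{A}$ is absolutely regular, Theorem~\arabic{AbsRegTheo} makes $(\m\mbf{A})\tl$ nonsingular, so for $\|\r\mbf{A}\|$ small enough all subgradients are invertible with uniformly bounded inverses. The starting vector is the exact solution of the midpoint system $(\m\mbf{A})\tl x=\sti(\mbf{b})$, i.e. $x^{(0)}=((\m\mbf{A})\tl)^{-1}\sti(\mbf{b})$; a continuity argument on this uniformly invertible family (or a topological degree count) then produces a zero $y^\ast=\sti(\mbf{x}^\ast)$ of $\Phi$ within $O(\|\r\mbf{A}\|)$ of $x^{(0)}$, whose preimage $\mbf{x}^\ast=\sti^{-1}(y^\ast)$ is the sought formal solution of $\mbf{Ax}=\mbf{b}$.

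Finally, for convergence I would combine order-convexity of $\Phi$ with the uniform invertibility above and appeal to the monotone convergence theory for Newton-type methods on order-convex, piecewise-affine maps. The target is to show that the damped iterates ($\tau\in\,]0,1]$) stay $\sqsubseteq$-comparable and order-bounded by $y^\ast$, so that --- only finitely many cells being available --- the active cell is eventually fixed and $x^{(k)}\to y^\ast=\sti(\mbf{x}^\ast)$. The hard part is securing the hypotheses of that theorem: order-convexity by itself does not produce monotone descent, because non-negativity of the blocks $D$ does not make $D^{-1}$ isotone for $\sqsubseteq$. What rescues the argument is narrowness: once $\|\r\mbf{A}\|$ is small, every subgradient $D$ is a controlled perturbation of $(\m\mbf{A})\tl$, so the damped Newton operator is a local contraction at $y^\ast$ with $x^{(0)}$ in its basin, while the order-convex structure pins the iterates to one side of $y^\ast$. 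Converting ``sufficiently narrow'' into an explicit threshold on $\|\r\mbf{A}\|$ --- expressed through $\|((\m\mbf{A})\tl)^{-1}\|$ and the cell geometry --- that simultaneously guarantees invertibility, the contraction, and admissibility of the starting vector is the delicate technical core of the argument.
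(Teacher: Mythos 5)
You should know at the outset that the paper itself contains no proof of this theorem: it is stated bare, the text refers the reader to \cite{Shary-98} for ``more particular types of interval systems'', and the introduction explicitly concedes that the justification of the subdifferential Newton method ``for the most general case faces a number of difficulties''. So there is no in-paper argument to measure your proposal against; the question is only whether your sketch closes the gap on its own. It does not. The two load-bearing steps are exactly the ones you defer. First, existence of the zero $y^\ast$ of $\Phi$ is asserted via ``a continuity argument \ldots (or a topological degree count)'' without being carried out; for a piecewise-affine map, nonsingularity of every selection matrix does not by itself give surjectivity or a well-defined degree count --- you would need, e.g., that all the selection matrices have determinants of one sign (which narrowness around the nonsingular $(\m\mbf{A})\tl$ would supply, but this must be said and quantified, and it is precisely the unstated threshold on $\|\r\mbf{A}\|$). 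Second, and more seriously, the convergence step is explicitly left open: you yourself note that order-convexity does not yield monotone descent because the non-negativity of a subgradient $D$ does not make $D^{-1}$ isotone for ``$\sqsubseteq$'', and you then declare that converting ``sufficiently narrow'' into a threshold guaranteeing invertibility, the contraction, and admissibility of the starting vector is ``the delicate technical core''. That core \emph{is} the theorem; a proof cannot end by naming it.

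There is also a smaller structural issue worth flagging. Your description of the subgradients as block matrices with \emph{independent} selections $Q_1\neq Q_2$ in the two block-rows means that their invertibility does not follow from the hypothesis that every member of $\pro\mbf{A}$ is absolutely regular: absolute regularity of a point matrix $Q$ concerns the matrix $Q\tl$ with the \emph{same} $Q$ in both diagonal positions, not mixed-selection matrices. You route around this by a perturbation of $(\m\mbf{A})\tl$, which is legitimate, but then the full strength of the hypothesis on $\pro\mbf{A}$ is never used and the perturbation bound is never produced. In short: the outline identifies the right ingredients (immersion, order convexity, polyhedrality, perturbation off the midpoint system), and it is plausibly the skeleton of the argument in \cite{Shary-98}, but as written it is a plan with the existence and contraction estimates missing, not a proof.
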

  
As applied to more particular types of interval systems, the proof of the convergence 
of the subdifferential Newton method and its detailed discussion are contained in 
\cite{Shary-98}. In practice, the subdifferential method of Newton converges extremely 
quickly, in a small finite number of iterations (especially for $\tau$ close or equal 
to one), which is explained by the polyhedral nature of the induced function $\Phi(x)$ 
that corresponds to interval linear systems. Another advantage of this method is the 
absence of any problems with the choice of the starting vector. Finally, the subdifferential 
Newton method works well in practice even for such interval systems in which proper and 
improper elements are arbitrarily mixed in the interval matrix. In general, the Newton 
subdifferential method is of great practical importance, but in this paper we will 
consider it mainly as an auxiliary algorithm, the initial link in the composition 
of numerical processes with a wider domain of convergence.

  
\section{Stationary single-step iterative methods} 
  
  
\subsection{General approach: distributive splittings} 
\label{GenSplitSect}

How else can numerical methods be proposed for finding formal (algebraic) solutions 
to interval linear systems of equations \eqref{InLinSys1}--\eqref{InLinSys2} with 
arbitrary matrices and right-hand sides? 
  
Although interval arithmetic differs from the real axis $\mbb{R}$ on its algebraic 
properties, the simple structure of system \eqref{InLinSys1}--\eqref{InLinSys2} 
makes it possible to apply for this purpose some general ideas related to stationary 
iterative methods for ordinary linear systems of equations. In particular, we can 
construct interval versions of Jacobi method, Gauss-Seidel method, etc. When 
implementing such methods, it is sometimes possible to iterate directly in the 
interval space $\mbb{IR}^n$ or $\mbb{KR}^n$, even without immersing it in the linear 
space of double dimension $\mbb{R}^{2n}$. On the other hand, the convergence obtained 
in such stationary iteration algorithms is principally slower than that for 
the subdifferential (quasidifferential) Newton method. 
  
It should be noted that iterative Jacobi-type methods for finding formal solutions 
to interval linear systems of equations have already been considered by different 
authors. V.\,Zyuzin was a pioneer in this area, who proposed them in the late 80s 
of the last century \cite{Zyuzin-87,Zyuzin-89,Zyuzin-90}. Then these methods were used 
in the works of L.\,Kupriyanova \cite{Kupriyanova}, S.\,Markov \cite{Markov,MarPopUll}, 
and Spanish researchers \cite{SaiGarJor,ModalIntAnal}. But the applicability scope 
and convergence rate of these simplest methods are unsatisfactory, which necessitates 
the development of more advanced numerical methods. Below, we consider a more general 
construction than Jacobi-type methods. 
  
In accordance with the general scheme of stationary single-step iterative methods, 
the original equation \eqref{InLinSys1}--\eqref{InLinSys2} must be equivalently 
reduced to a fixed-point form 
\begin{equation}
\label{FixedPointForm}
\mbf{x} = \mbf{T}(\mbf{x})
\end{equation}
with some operator $\mbf{T}: \mbb{KR}^n\rightarrow\mbb{KR}^n$. Then, after 
selecting an initial approximation $\mbf{x}^{(0)}$, iterations are started: 
\begin{equation}
\label{Iterations} 
\mbf{x}^{(k+1)} \gets\mbf{T} (\,\mbf{x}^{(k)}),
   \qquad  k = 0, 1, 2, \ldots.
\end{equation}
  
Under some special conditions on the transition operator $\mbf{T}$ (when it is 
a contraction, etc.) and on the initial approximation $\mbf{x}^{(0)}$, the sequence 
$\mbf{x}^{(k)}$ converges to a fixed point of $\mbf{T}$, i.\,e. to the desired formal 
solution of the equation \eqref{InLinSys2}. But, in contrast to the traditional 
non-interval case, bringing the interval system \eqref{InLinSys1}--\eqref{InLinSys2} 
to the form \eqref{FixedPointForm} is not a trivial task due to insufficient algebraic 
properties of Kaucher arithmetic. The problem is that at least \emph{two} terms with 
the variable $\mbf{x}$ in the formula \eqref{FixedPointForm} (which is equivalent to 
$\,\mbf{x} \ominus\mbf{T}(\mbf{x}) = 0\,$) should eventually collapse into the expression 
$\,\mbf{Ax}\ominus\mbf{b}$, containing only \emph{one} occurrence of the variable 
$\mbf{x}$. In the absence of a full-fledged opportunity to reduce such terms, 
special means are required for transforming the original equation \eqref{InLinSys2} 
into the form \eqref{FixedPointForm}. 
  
One of the promising approaches to the construction of iterative schemes 
for the solution of the equation \eqref{InLinSys2} is to go the ``opposite way'', 
from possible representations 
\begin{equation}
\label{Splitting}
\mbf{Ax} = \eus{G} (\mbf{x}) + \eus{H}(\mbf{x}),
\end{equation}
where\par
\hspace*{12mm}\parbox{144mm}{%
  \begin{itemize}
  \item[(i)] 
    the function $\eus{G}:\mbb{KR}^n\to\mbb{KR}^n$ is ``easily invertible'', 
    i.\,e. its inverse function $\eus{G}^{-1}: \mbb{KR}^n\to\mbb{KR}^n$, such that 
    $\eus{G}^{-1} (\,\eus{G}(\mbf{x})\,) = \ = \eus{G} (\,\eus{G}^{-1} (\mbf{x})\,) 
    = \mbf{x}$, can be easily constructed for $\eus{G}$; 
  \item[(ii)] 
    the function $\eus{H}: \mbb{KR}^n\to\mbb{KR}^n$ is easily computable. 
  \end{itemize}}
  
\begin{definition} 
\setcounter{SplitDefi}{\value{defi}} 
Let $\psi: \mbb{KR}^n \to\mbb{KR}^n$ be an operator in $\mbb{KR}^n$ determined 
by multiplication by the interval matrix $\mbf{A}$, i.\,e. such that $\,\psi(\mbf{x}) 
= \mbf{Ax}$. The representation of $\psi$ in the form of \eqref{Splitting}, satisfying 
for any $\mbf{x}\in\mbb{KR}^n$ the above conditions {\rm(i)--(ii)} will be called 
a \textsl{splitting of the operator~$\psi$ of multiplication by the matrix $\mbf{A}$}, 
or, briefly, \textsl{splitting of the matrix $\mbf{A}$}. 
\end{definition}

If we know some splitting of matrix $\mbf{A}$ in the interval linear system 
\begin{equation*}
\mbf{A}x = \mbf{b},
\tag{\ref{InLinSys2}}
\end{equation*}
then we can proceed to the equivalent equation 
\begin{equation*}
\eus{G}(\mbf{x}) + \eus{H}(\mbf{x}) = \mbf{b}, 
\end{equation*}
or 
\begin{equation*}
x = \eus{G}^{-1}\bigl(\,\mbf{b}\ominus \eus{H}(x)\,\bigr),
\end{equation*}
which coincides with the desired form \eqref{FixedPointForm}. Accordingly, 
the iterative process for computing the formal solution can be organized by the formula 
\begin{equation}
\label{SplitForm}
\mbf{x}^{(k+1)} \gets
\eus{G}^{-1}\,\bigl(\,\mbf{b}\ominus\eus{H}(\mbf{x}^{(k)})\,\bigr),
\qquad k = 0, 1, 2, \ldots\,,
\end{equation} 
after chosing a starting approximation $\mbf{x}^{(0)}$. Further, we restrict ourselves 
to the simplest case where the functions $\eus{G}$, $\eus{H}: \mbb{KR}^n \to\mbb{KR}^n$ 
themselves either represent multiplications by some interval matrices, or they are mappings 
similar in form. 
  
Suppose that, in system \eqref{InLinSys2}, the interval matrix $\mbf{A}$ is regular. 
Then there are at least two possibilities for the splitting \eqref{Splitting}, i.\,e. 
for the splitting of the multiplication operator by $\mbf{A}$, with an easily 
invertible mapping $\eus{G}^{-1}(\cdot)$: 
\begin{itemize}
\bigskip
\item[A)]
$\eus{G}(\cdot)$ is taken in the form of multiplication by a point absolutely regular 
matrix $G$, that is, 
\begin{equation*}
\eus{G}(\mbf{x}) = G\mbf{x}.
\end{equation*} 
In this case, $\,\eus{H}(\mbf{x}) = \mbf{Hx}$, $\mbf{H} = \mbf{A} - G$, and nonzero 
elements of $G$ and $\mbf{H}$ are chosen so that they have the same signs (due 
to this fact, equality \eqref{Splitting} is ensured for all $\mbf{x}, \mbf{y}\in
\mbb{KR}^n$ by virtue of distributivity \eqref{KRSignDistr}). The inverse 
mapping $\eus{G}^{-1}(\cdot)$ is then determined according to Corollary 
from Section~\ref{AbsRegularSubse}. 
\bigskip
\item[B)]
$\eus{G}(\cdot)$ and $\eus{H}(\cdot)$ are taken, respectively, in the form 
\begin{equation*}
\eus{G}(\mbf{x}) = \mbf{Gx}
\quad\mbox{ and }\quad
\eus{H}(\mbf{x}) = \mbf{Hx},
\end{equation*}
where $\mbf{G}$ and $\mbf{H}$ are upper and lower triangular interval matrices, 
$\mbf{A} = \mbf{G} + \mbf{H}$, where $\mbf{G}$ has invertible elements on the main 
diagonal, and $\mbf{H}$ has zero main diagonal (perhaps, to do this, one first need 
to swap the equations of the system).   \par 
Then the inverse mapping of $\eus{G}^{-1}(\cdot)$ is such that the result $\mbf{y}$ 
of its action on the element $\mbf{x}\in\mbb{KR}^n$ is determined by the ``backward 
substitution'' formulas for the triangular system $\mbf{Gy = x}$. It is natural 
to use the term \emph{triangular splitting} with respect to such a splitting 
of the multiplication operator by $\mbf{A}$. 
\end{itemize}
  
\bigskip
Note that in both cases the inverse mapping $\eus{G}^{-1}: \mbb{KR}^n \to\mbb{KR}^n\,$, 
generally speaking, cannot be defined through multiplication by an interval matrix.

  
\subsection{Splitting an absolutely regular matrix}

In this section, for the first of the matrix splitting cases considered 
in \S\ref{GenSplitSect}, we write out the computation formulas of the corresponding 
iterative process \eqref{SplitForm} in an explicit form. We also show how it is possible 
in practice to construct a splitting of an interval matrix. 
  
\begin{definition}
For $\mbf{a}\in\mbb{KR}$, we denote 
\begin{equation*}
\lfloor\mbf{a}\rfloor :=
\begin{cases}
\ \max\{\,\un{\mbf{a}},\,\ov{\mbf{a}}\,\}, 
                & \text{if \ $\mbf{a}<0$},\\[3pt]
\qquad 0\hfill, & \text{if \ $0\in\pro\mbf{a}$},\\[3pt]
\ \min\{\,\un{\mbf{a}},\,\ov{\mbf{a}}\,\}, 
                & \text{if \ $\mbf{a}>0$},
\end{cases}
\end{equation*}
that is, taking the element which is closest to zero from the proper projection 
of the interval $\mbf{a}$. 
\end{definition}
  
$\lfloor\mbf{a}\rfloor$ is the point with the \emph{smallest} absolute value from 
the proper  projection of the interval, and it has the same sign as the interval 
itself. It is easy to understand that if a real number $a$ lies between $0$ and 
$\lfloor\mbf{a}\rfloor$, i.\,e. $a\in 0\lor\lfloor\mbf{a}\rfloor$, the intervals 
$(\mbf{a} - a)$ and $[a, a]$ have the same sign, and therefore constitute 
a splitting for the multiplication operator on $\mbf{a}$ in the sense of 
Definition~\arabic{SplitDefi}. Therefore, the condition~A of \S\ref{GenSplitSect} 
can be satisfied, for example, if we take 
\begin{equation} 
\label{DSCondition} 
G\in 0\lor\lfloor\mbf{A}\rfloor,
\end{equation}
where $\lfloor\mbf{A}\rfloor$ means element-wise application to $\mbf{A}$ of 
the unary operation $\lfloor\cdot\rfloor$, and the interval matrix in the right-hand 
side of the inclusion is obtained by using the operation \eqref{Supremum}. 
  
To minimize the absolute value of the remainder $\mbf{H} = \mbf{A} - G$, one can 
assign $G = \lfloor\mbf{A}\rfloor$. It~is clear that, under the assumption we made 
about the regularity of $\mbf{A}$, the matrix $G$ also turns out to be non-singular. 
If, additionally, $G$ is absolutely regular, then the inverse mapping $\eus{G}^{-1} 
(\cdot)$ corresponds to multiplication by the matrix $(G\tl)^{-1}$ in $\mbb{R}^{2n}$ 
(see Section~\ref{AbsRegularSubse}). In any case, we always have the opportunity 
to make the matrix $G$ absolutely regular by slightly reducing the absolute value 
of its nonzero elements that do not violate the splitting condition \eqref{DSCondition}. 
  
We write out the formulas of the resulting iteration process in the Euclidean 
space $\mbb{R}^{2n}$. As~a~specification for \eqref{SplitForm}, we get 
\begin{equation}
\label{1stForm}
x^{(k+1)}\gets (G\tl)^{-1}\;\sti\!\bigl(\,\mbf{b}\ominus\mbf{H}\, 
  \sti^{-1}(\,x^{(k)})\bigr), 
\end{equation}
where ``sti'' is the standard immersion of $\mbb{KR}^n$ into $\mbb{R}^{2n}$ and 
\begin{equation}
\label{SimplestSplit}
G\in\mbb{R}^n, \qquad\mbf{H} = \mbf{A} - G.
\end{equation}
  
The iterative process with such splitting works satisfactorily, but sometimes not as 
well as one would like. For example, it does not lead to success in solving the 
Barth-Nuding interval linear system \cite{BarNud}, 
\begin{equation}
\label{BarthNudingSystem} 
\left( 
\begin{array}{cc}
[2, 4] & [-2, 1] \\[2mm] 
[-1, 2] & [2, 4] 
\end{array} 
\right) 
\,x = 
\left( 
\begin{array}{c}
[-2, 2] \\[2mm] [-2, 2] 
\end{array}
\right). 
\end{equation}
Therefore, it makes sense to consider other recipes for splitting the matrix of 
the interval linear system. 
  
Yet another way of splitting the interval matrix can be based on the generalized 
distributivity law \eqref{MarkovDistr} suggested by S.\,Markov. We introduce

\begin{definition}
For $\mbf{a}\in\mbb{KR}$, we denote 
\begin{equation*}
\lceil\mbf{a}\rceil =
\begin{cases}
\ \min\{\,\un{\mbf{a}},\,\ov{\mbf{a}}\,\}, 
                & \text{if \ $\mbf{a}\leq 0$},\\[3pt]
\qquad 0\hfill, & \text{if \ $0\in\pro\mbf{a}$},\\[3pt]
\ \max\{\,\un{\mbf{a}},\,\ov{\mbf{a}}\,\}, 
                & \text{if \ $\mbf{a}\geq 0$}, 
\end{cases}
\end{equation*} 
that is, taking the largest, in absolute value, element from the proper projection 
of the interval, if it does not contain zero, and zero otherwise. 
\end{definition}
  
If $0\not\in\pro\mbf{a}$, then $\lceil\mbf{a}\rceil$ is the point from the proper 
projection of the interval $\mbf{a}$, which has the \emph{largest} absolute value 
and the same sign as the interval itself (unlike $\lfloor\mbf{a}\rfloor$). It is 
easy to see that if a real number $a$ coincides in sign with $\lceil\mbf{a}\rceil$ 
and $|a| > \lceil\mbf{a}\rceil$, then the intervals $(\mbf{a} - a)$ and $[a, a]$ 
have different signs, and the sign of their sum $(\mbf{a} - a) + a$ is the same 
as for $\mbf{a}$. Therefore, for any $\mbf{x}\in\mbb{KR}$, by virtue 
of \eqref{MarkovDistr}, there holds 
\begin{equation*} 
\bigl(\,(\mbf{a} - a) + a\,\bigr)\cdot\mbf{x} =
(\mbf{a} - a)\cdot\dual\mbf{x} + a\cdot\mbf{x}.
\end{equation*} 
Consequently, in the general formula of the iterative processes \eqref{SplitForm}, 
we can put 
\begin{eqnarray}
\eus{G}(\mbf{x}) = G\mbf{x},
&&  G = (\,g_{ij}) = \lceil\mbf{A}\rceil , \label{gMap} \\[6mm]
\eus{H}(\mbf{x}) =
\begin{pmatrix}
\eus{H}_1(\mbf{x})\\[1mm]
\eus{H}_2(\mbf{x})\\[1mm]
     \vdots       \\[1mm]
\eus{H}_n(\mbf{x})
\end{pmatrix},
&& \eus{H}_i(\mbf{x}) =
 \sum_{j=1}^n \ \mbf{h}_{ij}\cdot
 \begin{cases}
 \ \mbf{x}_j, \text{ if } g_{ij} = 0,\\[1mm]
 \ \dual\mbf{x}_j , \text{ otherwise, }
 \end{cases} \phantom{AAA} 
 \label{hMap}\\[3mm]
&& \mbf{H} = (\,\mbf{h}_{ij}) = \lceil\mbf{A}\rceil - G.
 \label{MatrixH}
\end{eqnarray}
In $\mbb{R}^{2n}$, an explicit formula of the iterative process based on the above 
point splitting of the matrix of the interval system has the form 
\begin{equation} 
\label{2ndForm} 
x^{(k+1)}\gets (G\tl)^{-1}\; \sti\!\bigl(\,\mbf{b} 
                        \ominus\eus{H}(\sti^{-1}(\,x^{(k)}))\bigr), 
\end{equation} 
where the matrix $G\in\mbb{R}^{n\times n}$ and the mapping $\eus{H}(\cdot)$ are 
defined by \eqref{gMap}--\eqref{MatrixH}, and ``sti'' is the standard immersion of 
$\mbb{KR}^n$ in $\mbb{R}^{2n}$. Below, in Section~\ref{StatIterTests}, we present 
the results of numerical experiments with this method appearing under the name 
\texttt{ARMSplit} (derived from \un{A}bsolutely \un{R}egular \un{M}atrix 
\un{Split}ting), which show that it works significantly better than the process 
\eqref{1stForm}. 
  
What are the conditions for convergence of the considered iteration processes? 
The following result is valid: 
  
\begin{theorem} 
\setcounter{ARMSplitTheo}{\value{theo}} 
Let the matrices $G\in\mbb{R}^{n\times n}$ and $\mbf{H}\in\mbb{KR}^{n\times n}$ be 
the result of splitting, either \eqref{SimplestSplit} or \eqref{gMap}--\eqref{MatrixH}, 
applied to the interval matrix $\mbf{A}$, and $\eus{V}$ is the $2n\times 2n$-matrix 
$(G\tl)^{-1}$. If the spectral radius of the matrix $|\eus{V}|\,|\mbf{H}|\tl$ is less 
than one, 
\begin{equation*} 
\rho\bigl(|\eus{V}|\,|\mbf{H}|\tl\bigr) < 1, 
\end{equation*} 
then a formal solution $\mbf{x}^\ast$ of the interval linear system $\mbf{A}x = \mbf{b}$ 
exists and is unique, and iterations \eqref{1stForm} and \eqref{2ndForm} converge in 
$\mbb{R}^{2n}$ to $\sti(\mbf{x}^*)$, where ``{\,\rm sti}'' is the standard immersion of 
$\mbb{KR}^n$ to $\mbb{R}^{2n}$. 
\end{theorem}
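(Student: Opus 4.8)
The plan is to transport the whole problem through the standard immersion into $\mbb{R}^{2n}$ and settle it by a contraction--mapping argument. The matrix $\eus{V} = (G\tl)^{-1}$ exists because both splitting recipes are arranged so that $G$ is absolutely regular, whence $G\tl$ is regular by Theorem~\arabic{AbsRegTheo}. Both iterations \eqref{1stForm} and \eqref{2ndForm} then take the single form
\[
\eus{T}(x) \ := \ \eus{V}\,\sti\bigl(\,\mbf{b}\ominus\eus{H}(\sti^{-1}(x))\,\bigr),
\]
where $\eus{H}(\mbf{x}) = \mbf{H}\mbf{x}$ in case \eqref{SimplestSplit} and $\eus{H}$ is the dual--twisted sum \eqref{hMap} in case \eqref{gMap}--\eqref{MatrixH}. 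First I would record that $\sti$ is additive for the algebraic subtraction, so $\sti(\mbf{b}\ominus\eus{H}(\mbf{x})) = \sti(\mbf{b}) - \sti(\eus{H}(\mbf{x}))$, and that $\sti(G\mbf{x}) = G\tl\,\sti(\mbf{x})$ by \eqref{SigmaProperty3}. Multiplying the fixed--point relation by $G\tl$ and applying $\sti^{-1}$ shows that $x^\ast$ is a fixed point of $\eus{T}$ if and only if $G\,\mbf{x}^\ast + \eus{H}(\mbf{x}^\ast) = \mbf{b}$ with $\mbf{x}^\ast = \sti^{-1}(x^\ast)$; since each splitting is valid, i.e. $\mbf{A}\mbf{x} = G\mbf{x} + \eus{H}(\mbf{x})$ by \eqref{KRSignDistr} in case A and by \eqref{MarkovDistr} in case B, this is precisely $\mbf{A}\mbf{x}^\ast = \mbf{b}$. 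Hence formal solutions correspond bijectively to fixed points of $\eus{T}$, and it remains only to show that $\eus{T}$ has a unique, globally attracting fixed point.

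The core is a componentwise Lipschitz estimate for the nonlinear part $\eta(x) := \sti(\eus{H}(\sti^{-1}(x)))$, for which $\eus{T}(x) - \eus{T}(y) = -\eus{V}\,(\eta(x)-\eta(y))$. The map $\eta$ is continuous, positively homogeneous and piecewise linear: the sign--types of the components of $\sti^{-1}(x)$ cut $\mbb{R}^{2n}$ into finitely many polyhedral cones, on each of which $\eta$ agrees with a linear map whose matrix is the content of a single cell of Table~\ref{KaucherProduct}. I would bound every such cone--matrix entrywise by the nonnegative $2n\times 2n$ matrix $|\mbf{H}|\tl$ built blockwise from the endpoint magnitudes of the entries of $\mbf{H}$, using the scalar identities $q^+ + q^- = |q|$ and the block pattern of $\tl$; this gives $|\eta(x)-\eta(y)| \le |\mbf{H}|\tl\,|x-y|$ and, with $|\eus{V}z| \le |\eus{V}|\,|z|$, the key estimate $|\eus{T}(x)-\eus{T}(y)| \le P\,|x-y|$, where $P := |\eus{V}|\,|\mbf{H}|\tl \ge 0$ is exactly the matrix of the hypothesis.

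With the estimate in hand the spectral hypothesis becomes a genuine contraction through Perron--Frobenius. Because $P \ge 0$ and $\rho(P) < 1$, there is a strictly positive vector $v$ with $Pv < v$ (apply Perron--Frobenius to a strictly positive perturbation of $P$ and let the perturbation tend to zero). In the weighted maximum norm $\|z\|_v := \max_i |z_i|/v_i$ the estimate yields $\|\eus{T}(x)-\eus{T}(y)\|_v \le \theta\,\|x-y\|_v$ with $\theta := \max_i (Pv)_i/v_i < 1$. Since $\mbb{R}^{2n}$ is complete, Banach's fixed--point theorem furnishes a unique fixed point $x^\ast$ and convergence $x^{(k)}\to x^\ast$ from every starting vector; putting $\mbf{x}^\ast := \sti^{-1}(x^\ast)$ and invoking the reduction above identifies $\mbf{x}^\ast$ as the unique formal solution of $\mbf{A}x = \mbf{b}$ and shows that both \eqref{1stForm} and \eqref{2ndForm} converge in $\mbb{R}^{2n}$ to $\sti(\mbf{x}^\ast)$.

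I expect the componentwise domination in the second paragraph to be the main obstacle, and it must be handled with care rather than by a naive reading of $|\mbf{H}|\tl$. The difficulty is that Kaucher multiplication is only piecewise linear: for a cell whose interval $\mbf{h}_{ij}$ is sign--negative or straddles zero, the lower endpoint of $\mbf{h}_{ij}\mbf{x}_j$ depends on the \emph{upper} endpoint of $\mbf{x}_j$ and conversely, so the cone--matrices of $\eta$ carry genuine off--diagonal blocks. Establishing the estimate therefore requires a cell--by--cell traversal of Table~\ref{KaucherProduct} and a careful matching of these cross--terms against the blocks of $|\mbf{H}|\tl$; where the plain magnitude matrix fails to dominate in the sign--indefinite cells one replaces it by the paired endpoint--magnitude matrix that truly bounds every cone--matrix, and it is the spectral interaction of this matrix with $|\eus{V}|$ that the hypothesis controls. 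In case B the fixed linear involution induced by dualization, $\sti\circ\dual = -J\,\sti$ with $J$ the swap on each two--dimensional fibre, must be carried through the same bookkeeping, after which \eqref{KRSignDistr} and \eqref{MarkovDistr} guarantee that no further terms appear and the estimate closes.
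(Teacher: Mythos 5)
Your overall strategy --- transport to $\mbb{R}^{2n}$, a componentwise Lipschitz bound by the nonnegative matrix $P=|\eus{V}|\,|\mbf{H}|\tl$, then Perron--Frobenius and a weighted maximum norm --- is essentially the paper's contraction argument, and your first and third paragraphs are sound. The gap is exactly where you suspected it, and in the form you state it the key estimate is not merely delicate but false: $|\eta(x)-\eta(y)|\leq|\mbf{H}|\tl\,|x-y|$ with the \emph{plain componentwise} absolute value of $x-y$ fails already for $n=1$. Take $\mbf{h}=[-1,1]$, so $|\mbf{h}|=1$ and $|\mbf{H}|\tl$ is the $2\times2$ identity; with $\mbf{x}=[0,1]$, $\mbf{y}=[0,0]$ one has $x=\sti(\mbf{x})=(0,1)^\top$, $y=(0,0)^\top$, $\mbf{h}\mbf{x}=[-1,1]$, hence $|\eta(x)-\eta(y)|=(1,1)^\top\not\leq(0,1)^\top=|\mbf{H}|\tl\,|x-y|$. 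Your proposed repair --- replacing $|\mbf{H}|\tl$ by some larger ``paired endpoint--magnitude matrix'' --- would disconnect the argument from the hypothesis, which bounds the spectral radius of $|\eus{V}|\,|\mbf{H}|\tl$ and of nothing else.

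The paper's fix keeps the matrix and changes the distance. It works with the vector-valued metric $\zeta(x,y)\in\mbb{R}^{2n}_{+}$ whose $i$-th and $(i+n)$-th components both equal $\max\{\,|x_i-y_i|,\,|x_{i+n}-y_{i+n}|\,\}$, i.e.\ the interval multimetric $\Dist\bigl(\sti^{-1}(x),\sti^{-1}(y)\bigr)$ written twice. For this metric the required Lipschitz property of the interval part is the standard inequality \eqref{MatrLipschIneq}, $\Dist(\mbf{Hx},\mbf{Hy})\leq|\mbf{H}|\cdot\Dist(\mbf{x},\mbf{y})$; note that in the counterexample above $\dist(\mbf{x},\mbf{y})=1$ rather than $0$, which is precisely what the max repairs. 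One then obtains $\zeta\bigl(T(x),T(y)\bigr)\leq|\eus{V}|\,|\mbf{H}|\tl\,\zeta(x,y)$, using one further point you omit: by the Frobenius formulas $(G\tl)^{-1}$ inherits the block structure of $G\tl$ (equal diagonal blocks, equal off-diagonal blocks), so the $i$-th and $(i+n)$-th components of $|\eus{V}|\,|\mbf{H}|\tl$ applied to the ``doubled'' vector $\zeta(x,y)$ coincide and the estimate genuinely closes into an inequality between doubled vectors. From there your Perron--Frobenius step goes through verbatim with $\|z\|_v:=\max_i\zeta_i(z,0)/v_i$, which is a legitimate norm since $\zeta$ is translation invariant and generates the usual topology; the paper instead quotes the Schr\"{o}der fixed-point theorem for multimetrics, which is the same mechanism. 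The adaptation to \eqref{2ndForm} then follows, as you and the paper both note, from $|\eus{H}(\mbf{x})|=|\mbf{Hx}|$.
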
 
  
Before proceeding to the proof of the theorem, let us recall some facts and concepts. 
In order to consider convergence in the interval spaces and in usual Euclidean spaces, 
we need to have a ``distance'', a ``measure of the deviation'' of one vector from 
another. For ordinary linear spaces, distance can be introduced by various equivalent 
ways that are well known. The most popular is the concept of a metric (distance), as 
a non-negative real-valued function, which associates with each pair of vectors 
a non-negative number, the distance between them. 
  
The distance between intervals in the one-dimensional case is known to be defined as 
\begin{equation}
\label{VMetric}
\dist(\mbf{a},\mbf{b}) \;  := \;  |\mbf{a}\ominus\mbf{b}| \ 
   = \  \max\bigl\{|\un{\mbf{a}} - \un{\mbf{b}}|, |\ov{\mbf{a}} - \ov{\mbf{b}}|\bigr\}. 
\end{equation} 
In the same way, one can determine the distance in the multidimensional case, 
for interval vectors, but another construction is more popular. 
  
In multidimensional spaces, it is often convenient to work with a vector-valued 
metric (a vector-valued distance), often called \emph{multimetric}. For spaces 
of interval vectors $\mbb{IR}^n$ and $\mbb{KR}^n$, it is natural to introduce it as 
\begin{equation}
\label{VMultiMetric}
\Dist(\mbf{a}, \mbf{b})
\  :=  \ 
\left( 
\begin{array}{c}
\dist(\,\mbf{a}_1,\mbf{b}_1)\\[1mm]
\vdots\\[1mm]
\dist(\,\mbf{a}_n,\mbf{b}_n)
\end{array}
\right)
\in\mbb{R}^n_{+},
\end{equation}
that is, as a vector of distances between the components of the vectors $\mbf{a}$ 
and $\mbf{b}$. The multimetric is also defined in a similar way for ordinary linear 
vector spaces. It is easy to show that it has all the properties of a traditional 
scalar distance, i.\,e. non-negativity, symmetry, triangle inequality. With respect 
to multimetrics, we can talk about convergence, topological completeness and other 
important and necessary concepts.\footnote{Multimetric is called ``pseudometric'' 
in the popular book by L.\,Collatz \cite{Collatz}. We do not follow this usage 
because, in modern mathematics, the term ``pseudometric'' has a different meaning. 
It is ``a distance on a set that fulfils the same properties as a metric except 
relaxes the definition to allow the distance between two different points to be 
zero'' (a citation from MathWorld -- A Wolfram Web Resource).} 
  
Further, we will substantially rely on the following property of interval multimetrics. 
For any interval matrix $\mbf{A} = (\,\mbf{a}_{ij})\in\mbb{KR}^{m\times n}$ and any 
interval vectors $\mbf{x} = (\,\mbf{x}_{j})$, $\mbf{y} = (\mbf{y}_{j})\in\mbb{KR}^{n}$, 
there holds 
\begin{equation}
\label{MatrLipschIneq}
\Dist(\,\mbf{Ax},\mbf{Ay}\,) 
  \  \leq \  |\,\mbf{A}\,|\cdot\Dist(\,\mbf{x},\mbf{y}\,). 
\end{equation} 
Indeed, by virtue of the algebraic properties of interval operations, we can 
conclude that 
\begin{align*}
\dist\bigl(\,(\mbf{Ax})_{i},(\mbf{Ay})_{i}\bigr) \
&= \ \dist\left(\ \sum_{j=1}^n \mbf{a}_{ij}\mbf{x}_{j} \ ,
 \ \sum_{j=1}^n \mbf{a}_{ij}\mbf{y}_{j} \right)\\[3pt]
&\leq \ \sum_{j=1}^n \dist(\,\mbf{a}_{ij}\mbf{x}_{j},
 \mbf{a}_{ij}\mbf{y}_{j})\\[2pt]
&\leq \ \sum_{j=1}^n |\,\mbf{a}_{ij}|\,
 \cdot\dist(\,\mbf{x}_{j},\mbf{y}_{j}) 
\end{align*}
for all $i = 1,2,\ldots, m$, which proves the multidimensional 
estimate \eqref{MatrLipschIneq}.

\begin{proof} 
The convergence of the iterative processes \eqref{1stForm} and \eqref{2ndForm} 
in the Euclidean space $\mbb{R}^{2n}$ will be proved in some specially designed 
multimetric. Namely, we introduce a multimetric $\zeta: \mbb{R}^{2n}\to\mbb{R}^{2n}_+$ 
on $\mbb{R}^{2n}$ as follows: 
\begin{equation*}
\zeta (x,y) :=
\left(
\begin{array}{c}
\max\{\,| x_1 - y_1 |,| x_{n+1} - y_{n+1} |\,\} \\[4pt]
\vdots                                          \\[4pt]
\max\{\,| x_n - y_n |,| x_{2n} - y_{2n} |\,\}   \\[4pt]
\max\{\,| x_1 - y_1 |,| x_{n+1} - y_{n+1} |\,\} \\[4pt]
\vdots                                          \\[4pt]
\max\{\,| x_n - y_n |,| x_{2n} - y_{2n} |\,\}
\end{array}
\right).
\end{equation*} 
Recalling definition \eqref{StandardImmersion} of the standard immersion 
``sti'', we can determine the multi-metric $\zeta$ in another way, specifically 
\begin{equation*}
\zeta(x,y) =
\left(
\begin{array}{c}
|\,\sti^{-1}(x)\ominus\sti^{-1}(y)\,| \\[5pt]
|\,\sti^{-1}(x)\ominus\sti^{-1}(y)\,|
\end{array}
\right) =
\left(
\begin{array}{c}
\Dist\bigl(\,\sti^{-1}(x), \sti^{-1}(y)\,\bigr) \\[5pt]
\Dist\bigl(\,\sti^{-1}(x), \sti^{-1}(y)\,\bigr)
\end{array}
\right).
\end{equation*} 
  
The proof of Theorem~\arabic{ARMSplitTheo} will be first conducted for the iterative 
process \eqref{1stForm}. We show that, with respect to the above defined multi-metric 
$\zeta$, the transition operator $T$ of the iteration scheme \eqref{1stForm}, 
determined as 
\begin{equation*}
T(x)\; = \;\eus{V}\;\sti\bigl(\mbf{b}\ominus\mbf{H}\,\sti^{-1}(x)\bigl), 
\end{equation*}
satisfies the conditions of the Schr\"{o}der fixed-point theorem (see e.\,g. 
\cite{Collatz,OrtRhein}). 
  
There holds 
\begin{align*}
|\,(T(x))_i &- (T(y))_i |  \ 
 = \ \bigl(\,|\,T(x) - T(y)|\,\bigr)_i \\[10pt]
&= \ \Bigl(\;\bigl|\,\eus{V}\;\sti\,(\,\mbf{b}\ominus\mbf{H}
  \,\sti^{-1}(x)) - \eus{V}\;\sti\,(\,\mbf{b}\ominus\mbf{H}
  \,\sti^{-1}(y))\bigr|\;\Bigr)_i \\[10pt]
&= \ \Bigl(\;\bigl|\,\eus{V}\,\bigl(\,\sti\,(\,\mbf{b}\ominus\mbf{H}
  \,\sti^{-1}(x)) - \sti(\,\mbf{b}\ominus\mbf{H}\,\sti^{-1}(y))\bigr)\,\bigr|
  \;\Bigr)_i\\[10pt]
&= \ \Bigl(\;\bigl|\,\eus{V}\;\sti\,(\,\mbf{H}\,\sti^{-1}(x)
  \ominus\mbf{H}\,\sti^{-1}(y))\,\bigr|\;\Bigr)_i \\[10pt]
&\leq \ \Bigl(\,|\eus{V}|\cdot|\sti(\,\mbf{H}\,\sti^{-1}(x)
 \ominus\mbf{H}\,\sti^{-1}(y))|\,\Bigr)_i \\[10pt]
&= \ \left(\;|\eus{V}|\cdot\left(
 \begin{array}{c}
 \bigl|\,\un{\mbf{H}\,\sti^{-1}(x)\ominus\mbf{H}\,\sti^{-1}(y)}
  \,\bigr|\\[7pt]
 \bigl|\,\ov{\mbf{H}\,\sti^{-1}(x)\ominus\mbf{H}\,\sti^{-1}(y)}
  \,\bigr|
 \end{array}
 \right)\right)_i \\[10pt]
&\leq \ \left(\;|\eus{V}|\cdot\left(
 \begin{array}{l}
 \max\bigl\{\;\bigl|\,\un{\mbf{H}\,\sti^{-1}(x)
          \ominus\mbf{H}\,\sti^{-1}(y)}\,\bigr|, \\[5pt] 
 \phantom{MMMMM}\bigl|\,\ov{\mbf{H}\,\sti^{-1}(x)
          \ominus\mbf{H}\,\sti^{-1}(y)}\,\bigr|\;\bigr\} \\[7pt]
 \max\bigl\{\;\bigl|\,\un{\mbf{H}\,\sti^{-1}(x)
          \ominus\mbf{H}\,\sti^{-1}(y)}\,\bigr|, \\[5pt]
 \phantom{MMMMM}\bigl|\,\ov{\mbf{H}\,\sti^{-1}(x)
          \ominus\mbf{H}\,\sti^{-1}(y)}\,\bigr|\;\bigr\}
 \end{array}
 \right)\right)_{i} \\[10pt]
&= \ \left(\;|\eus{V}|\cdot\left(
 \begin{array}{c}
 \bigl|\,\Dist(\mbf{H}\,\sti^{-1}(x), \mbf{H}\,\sti^{-1}(y))\,\bigr|\\[7pt]
 \bigl|\,\Dist(\mbf{H}\,\sti^{-1}(x), \mbf{H}\,\sti^{-1}(y))
  \,\bigr|
 \end{array}
 \right)\right)_{i}\,.
\end{align*}
  
Using inequality \eqref{MatrLipschIneq}, we can continue our calculations as follows: 
\begin{align*}
|\,(T(x))_i - (T(y))_i| \ 
&\leq \ \left(\;|\eus{V}|\cdot\left(
 \begin{array}{c}
 |\mbf{H}|\cdot\Dist(\,\sti^{-1}(x), \sti^{-1}(y)\,)\\[5pt]
 |\mbf{H}|\cdot\Dist(\,\sti^{-1}(x), \sti^{-1}(y)\,)
 \end{array}
 \right)\right)_i \\[8pt]
&= \ \left(\;|\eus{V}|\,
     \begin{pmatrix}
     |\mbf{H}| & 0 \\[2pt]
     0 & |\mbf{H}|
     \end{pmatrix}
    \left(
 \begin{array}{c}
 \Dist(\,\sti^{-1}(x), \sti^{-1}(y)\,)\\[5pt]
 \Dist(\,\sti^{-1}(x), \sti^{-1}(y)\,)
 \end{array}
    \right)\right)_i \\[8pt]
&= \ \left(\;|\eus{V}|\,|\mbf{H}|\tl \left(
 \begin{array}{c}
 \Dist(\,\sti^{-1}(x), \sti^{-1}(y)\,)\\[5pt]
 \Dist(\,\sti^{-1}(x), \sti^{-1}(y)\,)
 \end{array}
    \right)\right)_i \\[8pt]
&= \ \bigl(\;|\eus{V}|\,|\mbf{H}|\tl \zeta(x,y)\;\bigr)_i \\[8pt]
&= \ \bigl(\;\mbox{$i$-th row of the matrix}\;|\eus{V}|\,|\mbf{H}|\tl 
    \bigr)\cdot\zeta(x,y).
\end{align*}
Therefore, 
\begin{align*}
\max & \{\,|\,(T(x))_i - (T(y))_i|, 
   \,|\,(T(x))_{i+n} - (T(y))_{i+n}|\,\} \\[12pt]
&= \ \max\bigl\{\,\bigl(|\eus{V}|\,|\mbf{H}|\tl\; \zeta(x,y)\bigr)_i,\,
 \bigl(|\eus{V}|\,|\mbf{H}|\tl\; \zeta(x,y)\bigr)_{i+n}\,\bigr\}\\[12pt]
&= \ \max\left\{\;\left(
 \mbox{\begin{tabular}{c}
         $i$-th row    \\
         of the matrix \\
       $|\eus{V}|\,|\mbf{H}|\tl$
       \end{tabular}}
 \right) \zeta(x,y),  \  \rule[-23pt]{0mm}{50pt}
 \left(
 \mbox{\begin{tabular}{c}
       $(i+n)$-th row \\
       of the matrix  \\
       $|\eus{V}|\,|\mbf{H}|\tl$
       \end{tabular}}
 \right) \zeta(x,y)\;
        \right\}.
\end{align*}
From the Frobenius formulas for inverting block matrices (see, for example, 
\cite{Gantmacher}), it follows that the $2n\times 2n$-matrix $\eus{V}$ is a block 
matrix of the same structure as the extended multiplier matrix $G\tl$, i.\,e., it is 
divided into four $n\times n$ blocks, and the diagonal blocks are the same. Therefore, 
in general, we obtain 
\begin{equation*}
\zeta\bigl(\,T(x), T(y)\,\bigr) \ \leq  \ 
|\eus{V}|\,|\mbf{H}|\tl\;\zeta(x, y),
\end{equation*}
as required. 
  
It is easy to see that the proof conducted is easily adaptable for the iterative 
process \eqref{2ndForm} as well, since $|\eus{H}(\mbf{x})| = |\mbf{Hx}|$ 
for any $\mbf{x}\in\mbb{KR}^n$. 
\end{proof}

\bigskip
The main concern of the developers of iterative methods of the form \eqref{Iterations} 
is to reduce, as much as possible, the spectral radius (or norm) of the so-called 
Lipschitz operator for the transition operator $\mbf{T}$, in order, first, to ensure 
the convergence of iterations, and secondly, to accelerate this convergence where it 
already exists. As follows from the proof of Theorem~\arabic{ARMSplitTheo}, the matrix 
of this Lipschitz operator is $|\eus{V}|\,|\mbf{H}|\tl$ for the iterations \eqref{1stForm}.  Optimization of the distributive splitting of the matrix $\mbf{A}$ into $G$ and $\mbf{H}$ 
is an interesting, but difficult task, and here we will not consider its solution in 
the most general form. We only note that splitting a point addend is especially 
convenient in practice for cases where the matrix of the interval systems has many 
point (non-interval) elements, while the proportion of substantially interval elements 
in the matrix is small.

  
\subsection{Triangular splitting of the matrix of the equation system}

With a triangular splitting of the interval matrix $\mbf{A}$, the identity 
\begin{equation*} 
\mbf{Ax} = \mbf{Gx} + \mbf{Hx}, 
  \qquad \mbf{x}\in\mbb{KR}^{n}, 
\end{equation*} 
is achieved due to the fact that the matrices $\mbf{G}$ and $\mbf{H}$ form a disjoint 
decomposition for $\mbf{A}$, since nonzero elements in $\mbf{G}$ and $\mbf{H}$ are 
mutually exclusive. The pseudocode of the iteration process \eqref{SplitForm} in 
$\mbb{KR}^n$, with the triangular splitting of the matrix $\mbf{A}$, which we call 
\texttt{TrnSplit}, has the form presented in Table~\ref{TrnSplitAlgo}, where 
``$\oslash$'' is an internal division in $\mbb{KR}$, i.\,e. multiplication by 
the inverse interval (if it exists). 
  
It is worth noting that the iterative method with the triangular splitting of the form 
we used, is called the Gauss-Seidel method for ordinary linear systems (see e.\,g. 
\cite{Varga}). It would be logical to call our algorithm the ``interval Gauss-Seidel 
method'', but this name has already been fixed with respect to another iterative 
process (see e.\,g. \cite{Neumaier,SharyBook}). Therefore, we call our method simply 
a~``method based on triangular splitting''. 
  
  
\begin{table}[hp]
\begin{center}
\caption{Algorithm \texttt{TrnSplit} for computing formal solutions,}
based on triangular splitting of the system matrix. \\[7pt]
\label{TrnSplitAlgo}
\tabcolsep 7mm
\begin{tabular}{|l|}
\hline\\
\hspace*{44mm}\textsf{Input}\\[5pt]
An interval linear algebraic $n\times n$-system $\mbf{A}x = \mbf{b}$.\\[3pt]
A triangular splitting of the matrix $\mbf{A}$ of the system \\[1pt] 
\phantom{I} to interval matrices $\mbf{G} = (\,\mbf{g}_{ij})$ 
  and $\mbf{H} = (\,\mbf{h}_{ij})$.\\[3pt]
A specified accuracy $\epsilon$.\\[10pt]
\hline\\
\hspace{43mm}\textsf{Output}\\[5pt]
An approximation to formal solution of the system $\mbf{A}x = \mbf{b}$.\\[10pt]
\hline\\
\hspace*{40mm}\textsf{Algorithm}\\
\parbox{84mm}{%
\begin{tabbing}
\= AAA\= AAA\= \kill
\> $q\gets +\,\infty$;\\[2pt]
\> assign a starting value to the vector $\mbf{x}$;\\[4pt]
\> \texttt{DO WHILE} \  ( $q\geq\epsilon$ )\\[2pt]
\>\> $\mbf{p}_1 \gets\mbf{b}_1$;\\[4pt]
\>\> \texttt{DO FOR} \ $i=2$ \ \texttt{TO} \ $n$\\[4pt]
\>\>\>
\begin{math}
\displaystyle
\mbf{p}_i \gets\mbf{b}_i \ominus\sum_{j=1}^{i-1}
  \mbf{h}_{ij}\mbf{x}_j
\end{math}\\[3pt]
\>\> \texttt{END DO}\\[2pt]
\>\> $\tilde{\mbf{x}}_{n} \gets\mbf{p}_n \oslash\mbf{g}_{nn}$;\\[4pt]
\>\> \texttt{DO FOR} \ $i=n-1$ \ \texttt{TO} \ $1$ 
     \  \texttt{STEP} \ $(-1)$\\[3pt]
\>\>\>
\begin{math}
\displaystyle
\tilde{\mbf{x}}_i \gets\Biggl(\;\mbf{p}_i \ominus\sum_{j=i+1}^n
  \mbf{g}_{ij} \tilde{\mbf{x}}_j \Biggr)\oslash\mbf{g}_{ii}
\end{math}\\[3pt]
\>\> \texttt{END DO}\\[2pt]
\>\> $q\gets$ distance between the vectors $\mbf{x}$ and $\tilde{\mbf{x}}$;\\[2pt]
\>\> $\mbf{x}\gets\tilde{\mbf{x}}$;\\[2pt]
\> \texttt{END DO}
\end{tabbing}}\\[5pt]
\hline
\end{tabular}
\end{center}
\end{table}
  
  
The study of the convergence of the algorithm \texttt{TrnSplit} was carried out 
by A.Yu.\,Karlyuk under the guidance of the author in \cite{Karlyuk}, and the main 
result of this article was as follows: 
  
\begin{theorem} 
For the interval matrix $\mbf{A}$ of the equation system \eqref{InLinSys2}, 
let the point $n\times n$-matrices $D$, $L$, $R$ are defined by the formulas 
\begin{eqnarray*} 
D &=& \diag \{\,|\,\inv\mbf{a}_{11}|, |\,\inv\mbf{a}_{22}|, \ldots , 
   |\,\inv\mbf{a}_{nn}|\,\} ,\\[7pt] 
L &=& (\,l_{ij}), \quad\mbox{ with }\quad 
   l_{ij} = \left\{ 
   \begin{array}{rl} 
   |\,\mbf{a}_{ij}|, & \mbox{ if } \ i>j,\\[2pt] 
    0, & \mbox{ if } \ i\le j,\\ 
   \end{array} 
   \right.\\[7pt] 
R &=& (\,r_{ij}), \quad\mbox{ with }\quad 
   r_{ij} = 
   \left\{ 
   \begin{array}{rl}
     0, & \mbox{ if } \ i\ge j,\\[2pt] 
   |\,\mbf{a}_{ij}|, & \mbox{ if } \ i<j, \\ 
   \end{array} 
   \right. 
\end{eqnarray*} 
where ``inv'' means taking the inverse element with respect to multiplication 
in Kaucher interval arithmetic. If the matrix 
\begin{equation*}
\eus{Q}\; := \;\sum_{j=0}^{n-1}(DL)^j DR \; = \;(I-DL)^{-1}DR
\end{equation*} 
satisfies $\rho(\eus{Q}) < 1$, then the iteration process {\rm\texttt{TrnSplit}} 
converges to a unique formal solution~$\mbf{x}^\ast$ of system \eqref{InLinSys2} 
from any starting approximation $\mbf{x}^{(0)}$. Additionally, the following estimate 
is valid 
\begin{equation*}
\Dist \bigl(\,\mbf{x}^{\ast},\,\mbf{x}^{(k)}\bigr) \  \leq  \ 
   \Biggl(\bigl(I-\eus{Q}\bigr)^{-1} -\sum_{j=0}^{k-1} \eus{Q}^j \Biggr) 
   \cdot \Dist \bigl(\mbf{x}^{(0)},\,\mbf{x}^{(1)} \bigr), 
\end{equation*}
where the multimetrics ``{\,\rm Dist}'' is defined in \eqref{VMultiMetric}. 
\end{theorem}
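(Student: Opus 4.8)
The plan is to realise one sweep of \texttt{TrnSplit} as a single transition operator $\mbf{T}:\mbb{KR}^n\to\mbb{KR}^n$, with $\mbf{x}^{(k+1)}=\mbf{T}(\mbf{x}^{(k)})$, and to show that $\mbf{T}$ is a contraction with respect to the multimetric $\Dist$ of \eqref{VMultiMetric} whose Lipschitz matrix is exactly $\eus{Q}$. Once that is in hand, the existence and uniqueness of the formal solution, convergence from an arbitrary starting vector, and the stated a~priori estimate all follow from the vector-valued (multimetric) form of the Schr\"{o}der fixed-point theorem, exactly as in the proof of Theorem~\arabic{ARMSplitTheo}. That the fixed point $\mbf{x}^\ast$ of $\mbf{T}$ genuinely solves $\mbf{A}x=\mbf{b}$ is guaranteed by the disjoint triangular decomposition $\mbf{Ax}=\mbf{Gx}+\mbf{Hx}$: the fixed-point equation $\mbf{T}(\mbf{x})=\mbf{x}$ is equivalent to $\mbf{Gx}=\mbf{b}\ominus\mbf{Hx}$, i.e.\ to $\mbf{Ax}=\mbf{b}$.

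The core step is a componentwise distance recursion for a single sweep. Fixing $\mbf{x},\mbf{y}\in\mbb{KR}^n$ and writing $\tilde{\mbf{x}}=\mbf{T}(\mbf{x})$, $\tilde{\mbf{y}}=\mbf{T}(\mbf{y})$, $d:=\Dist(\mbf{x},\mbf{y})$ and $\widetilde{d}:=\Dist(\tilde{\mbf{x}},\tilde{\mbf{y}})$, I would track $\Dist$ through the two phases of the algorithm. When the right-hand sides $\mbf{p}_i$ are formed, the internal subtraction $\ominus$ is distance-invariant, since it is inversion in the additive group of $\mbb{KR}$, so by \eqref{MatrLipschIneq} applied to the off-diagonal part of the sweep one obtains $\dist(\mbf{p}_i(\mbf{x}),\mbf{p}_i(\mbf{y}))\le\sum_{j}|\mbf{a}_{ij}|\,\dist(\mbf{x}_j,\mbf{y}_j)$. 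In the triangular solve, each division $\oslash\,\mbf{g}_{ii}$ is multiplication by $\inv\mbf{a}_{ii}$ and therefore scales distance by the factor $|\inv\mbf{a}_{ii}|$, the $i$-th diagonal entry of $D$. Collecting these bounds and separating the off-diagonal contributions into those coming from components already updated within the current sweep, which assemble into the scaled strictly-triangular factor $DL$ acting on $\widetilde{d}$, and those coming from the previous iterate, which assemble into $DR$ acting on $d$, yields the implicit estimate $\widetilde{d}\le DR\,d+DL\,\widetilde{d}$.

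It then remains to resolve this implicit inequality for $\widetilde{d}$. Because $DL$ is strictly triangular and non-negative, it is nilpotent with $(DL)^n=0$, so $I-DL$ is invertible and $(I-DL)^{-1}=\sum_{j=0}^{n-1}(DL)^j\ge 0$. Rewriting the bound as $(I-DL)\widetilde{d}\le DR\,d$ and left-multiplying by this non-negative inverse preserves the inequality and gives $\widetilde{d}\le\eus{Q}\,d$ with $\eus{Q}=(I-DL)^{-1}DR=\sum_{j=0}^{n-1}(DL)^jDR$, that is $\Dist(\mbf{T}(\mbf{x}),\mbf{T}(\mbf{y}))\le\eus{Q}\,\Dist(\mbf{x},\mbf{y})$. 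Under the hypothesis $\rho(\eus{Q})<1$, the non-negative Lipschitz matrix $\eus{Q}$ makes $\mbf{T}$ a contraction in $\Dist$, and Schr\"{o}der's theorem supplies the unique fixed point $\mbf{x}^\ast$ together with $\Dist(\mbf{x}^{(k+1)},\mbf{x}^{(k)})\le\eus{Q}^{\,k}\,\Dist(\mbf{x}^{(1)},\mbf{x}^{(0)})$. Summing the tail of the Neumann series, $\sum_{j\ge k}\eus{Q}^{\,j}=(I-\eus{Q})^{-1}-\sum_{j=0}^{k-1}\eus{Q}^{\,j}$, which converges since $\rho(\eus{Q})<1$, and applying the triangle inequality along the orbit then produces precisely the claimed error estimate.

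The main obstacle I anticipate is the careful bookkeeping in the second paragraph: correctly isolating, inside one Gauss--Seidel-type sweep, the dependence of each new component on the components updated earlier in the same sweep, which is the self-coupling responsible for the $DL\,\widetilde{d}$ term, from its dependence on the old iterate, and verifying that each interval-arithmetic estimate is legitimate in Kaucher arithmetic -- the distance-invariance of $\ominus$, the exact scaling factor $|\inv\mbf{a}_{ii}|$ of $\oslash\,\mbf{g}_{ii}$, and the matrix Lipschitz inequality \eqref{MatrLipschIneq}. The subsequent inversion of $I-DL$ and the fixed-point argument are then routine, mirroring the proof of Theorem~\arabic{ARMSplitTheo}.
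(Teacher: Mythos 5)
The paper offers no proof of this theorem at all --- it is quoted from the work of Karlyuk \cite{Karlyuk} --- so the only internal benchmark is the proof of Theorem~\arabic{ARMSplitTheo}, and your overall template is exactly the right one: realise one sweep of \texttt{TrnSplit} as a transition operator $\mbf{T}$, bound $\Dist$ through the sweep, resolve the resulting implicit inequality using the nilpotency of the strictly triangular factor, and invoke the Schr\"{o}der fixed-point theorem. The supporting observations are also sound: $\ominus$ is distance-invariant because $\dist(\mbf{a},\mbf{b})=|\mbf{a}\ominus\mbf{b}|$ and $\ominus$ is group subtraction; the division step obeys $\dist(\mbf{p}\oslash\mbf{g}_{ii},\mbf{q}\oslash\mbf{g}_{ii})\le|\inv\mbf{a}_{ii}|\,\dist(\mbf{p},\mbf{q})$ by the scalar case of \eqref{MatrLipschIneq}; the fixed points of $\mbf{T}$ are exactly the formal solutions because the splitting is disjoint; and the Neumann-series tail gives the stated a~priori estimate once a Lipschitz bound $\Dist(\mbf{T}(\mbf{x}),\mbf{T}(\mbf{y}))\le\eus{Q}\,\Dist(\mbf{x},\mbf{y})$ is in hand.

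The gap sits precisely in the step you yourself flagged as delicate, and it is not a harmless bookkeeping issue. In the pseudocode of Table~\ref{TrnSplitAlgo} the matrix $\mbf{G}$ is \emph{upper} triangular and the solve is a \emph{backward} substitution: $\tilde{\mbf{x}}_i$ depends on the components already updated in the current sweep, namely $\tilde{\mbf{x}}_j$ with $j>i$, through the coefficients $\mbf{g}_{ij}$, $j>i$, which lie in the strict \emph{upper} triangle and hence assemble into $DR$; the previous iterate enters only through $\mbf{h}_{ij}$, $j<i$, which assemble into $DL$. The componentwise recursion the algorithm actually produces is therefore $\widetilde{d}\le DL\,d+DR\,\widetilde{d}$, which resolves to $\widetilde{d}\le(I-DR)^{-1}DL\,d$ --- not to the $\eus{Q}=(I-DL)^{-1}DR$ of the statement. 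You have assigned the intra-sweep coupling to $DL$ and the old-iterate coupling to $DR$, which is the reverse of what the pseudocode dictates; this reproduces the stated formula but does not follow from the algorithm. The distinction is material, because $(I-DL)^{-1}DR$ and $(I-DR)^{-1}DL$ need not have the same spectral radius for a nonsymmetric matrix (their generalized characteristic equations $\det(\lambda I-\lambda DL-DR)=0$ and $\det(\mu I-\mu DR-DL)=0$ are different polynomials already for $n=3$), so the hypothesis $\rho(\eus{Q})<1$ is not interchangeable with $\rho\bigl((I-DR)^{-1}DL\bigr)<1$. A complete proof must either carry the bookkeeping through honestly and arrive at the Lipschitz matrix that matches the pseudocode --- and then reconcile the result with the statement, e.g.\ by observing that the theorem as printed corresponds to the forward-substitution orientation with a lower-triangular $\mbf{G}$ --- or must reorient the algorithm; as written, your second paragraph asserts the decomposition it needs rather than deriving it from Table~\ref{TrnSplitAlgo}.
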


For $\rho\,(\eus{Q}) <1$ to take place, it suffices, for example, to satisfy 
the following condition on the interval matrix $\mbf{A}$ of system \eqref{InLinSys2}: 
recurrently calculated numbers $s_1$, $s_2$, \ldots, $s_n$, such that 
\begin{equation}
\label{SiLeqOne}
s_i =
\frac{1}{\langle\,\pro\mbf{a}_{ii}\rangle}\left(\:\sum_{j=1}^{i-1}
\,|\,\mbf{a}_{ij}|\,s_j +\sum_{j=i+1}^n |\,\mbf{a}_{ij}| \right),
\qquad i = 1, 2, \ldots, n ,
\end{equation}
are all strictly less than one. In turn, conditions \eqref{SiLeqOne} are certainly 
fulfilled for interval matrices with the property of diagonal dominance: 
\begin{equation*}
\langle\,\pro\mbf{a}_{ii}\rangle > \sum_{j\ne i}|\,\mbf{a}_{ij}|
\qquad
\text{for every} \  i = 1, 2, \ldots,n. 
\end{equation*}
  
A classical result of R.\,Varga \cite{Varga} states that, in the point case of 
$\mbf{A} = G - H = C$, $\mbf{b} = d$, the method \eqref{SplitForm} converges
the faster the smaller $H$ is.

   
\section{Numerical examples} 
\label{StatIterTests}

\vspace{-\medskipamount}  
\begin{example} 
Let us consider the interval linear system by Barth-Nuding \cite{BarNud} 
\begin{equation*} 
\tag{\ref{BarthNudingSystem}}
\begin{pmatrix} 
 [2, 4] & [-2, 1] \\[2mm] 
[-1, 2] &  [2, 4]
\end{pmatrix}
\,x =
\begin{pmatrix}
[-2, 2] \\[2mm] [-2, 2]
\end{pmatrix}. 
\end{equation*}
The algorithm \texttt{ARMSplit} after 10 iterations gives 3 valid significant digits 
of the exact answer $(-\frac{1}{3}, \frac{1}{3})^\top$, and after 20 iterations it 
gives 6 correct significant digits, which in order of labor costs is comparable 
to the iterative method from \cite{Zyuzin-87,Zyuzin-89,Zyuzin-90}, based on 
the cleavage of the main diagonal of the matrix of the interval system. The same 
figures are achieved by the algorithm \texttt{ARMSplit} when finding a formal 
solution of system \eqref{BarthNudingSystem} with the dualized matrix (which 
occurs, for example, in the internal evaluation of the united solution set 
for \eqref{BarthNudingSystem}). 
\end{example}

\begin{example} 
Consider an interval linear $40\times 40$-system with the matrix 
\begin{equation}
\label{ICDiffMatrix}
\arraycolsep 1pt
{\fontsize{9pt}{11pt}\selectfont
\left(\begin{array}{cccccc}
 [1.8, 2.2]  & [-1.1, -0.9] & & & \raisebox{-2.5ex}{\Huge 0} & \\[1mm]
[-1.1, -0.9] &  [1.8, 2.2 ] & [-1.1, -0.9] & & & \\[1mm]
 & [-1.1, -0.9] & [1.8, 2.2] & \ddots & & \\[1mm]
 & & \ddots & \ddots & \ddots & \\[1mm]
 &  &  & \ddots & [1.8, 2.2]  & [-1.1, -0.9]\\[1mm]
 & \mbox{\Huge 0} & & & [-1.1, -0.9] &  [1.8, 2.2]
\end{array}\right)}
\end{equation}
and right-hand side vector 
\begin{equation}
\label{RHSVector1}
\left(
\begin{array}{c}
[0.9, 1.1]\\[1mm] [1.8, 2.2]\\[1mm] [2.7, 3.3]\\[1mm] \vdots\\[1mm]
  [35.1, 42.9]\\[1mm] [36, 44]
\end{array}
\right). 
\end{equation}
  
\noindent
The matrix \eqref{ICDiffMatrix} is obtained from a popular tridiagonal matrix 
approximating the second derivative on a uniform grid by 10\% broadening of the elements, 
and the right-hand side of the system is obtained by the same broadening of the vector 
$(\,1, 2, 3, \ldots, 39, 40\,)^\top$. As in the case of the subdifferential Newton method, 
neither this system of equations nor the system derived from it by matrix dualization 
represent a serious problem for the algorithms presented in this paper. The method 
\text{ARMSplit} based on point splitting finds 12--13 correct significant digits for 
the endpoints of the components of formal solutions of both the original interval 
system and the system with a dualized matrix after 16 iterations. 
\end{example}

\begin{example}
Consider an interval linear $40\times 40$-system with Neumaier matrix 
\cite{Neumaier} having the number $40$ at the main diagonal, i.\,e. 
\begin{equation}
\label{NeumaierMatrix40}
\left(  \
\begin{array}{ccccc}
   40    & [ 0, 2 ] & \cdots & [ 0, 2 ]  &  [ 0, 2 ] \\[2mm]
[ 0, 2 ] &    40    & \cdots & [ 0, 2 ]  &  [ 0, 2 ] \\[2mm]
 \vdots  &  \vdots  & \ddots &  \vdots   &   \vdots  \\[2mm]
[ 0, 2 ] & [ 0, 2 ] & \cdots &    40     &  [ 0, 2 ] \\[2mm]
[ 0, 2 ] & [ 0, 2 ] & \cdots & [ 0, 2 ]  &     40
\end{array}
\  \right), 
\end{equation}
and the right-hand side vector 
\begin{equation}
\label{RHSVector2}
\left(
\begin{array}{c}
[10, 20] \\[1mm]
[10, 20] \\[1mm]
 \vdots  \\[1mm]
[10, 20]
\end{array}
\right). 
\end{equation}
The algorithm \texttt{ARMSplit} computes, after 40 iterations, the approximation 
to the formal solution 
\begin{equation*}
\left(
\begin{array}{c}
[0.25, 0.16949152542]\\[1mm]
[0.25, 0.16949152542]\\[1mm]
  \vdots\\[1mm]
[0.25, 0.16949152542]
\end{array}
\right)
\end{equation*}
with an accuracy of about $10^{-8}$.
  
The same can be observed when calculating the formal solution to the interval linear 
system with the matrix dualized to \eqref{NeumaierMatrix40} and the right-hand side 
\eqref{RHSVector2}. Note that in this example, the interval matrix of the system is 
singular (see \cite{Neumaier}), but this does not prevent the algorithm 
\texttt{ARMSplit} from successfully finding a formal solution. 
\end{example}

\begin{example} 
For the interval linear $7\times 7$-system 
   
{\footnotesize
\begin{displaymath}
\arraycolsep 2pt
\left(\begin{array}{ccccccc}
 [4,6]  & [-9,0]  & [0,12]  & [2,3]  & [5,9]  & [-23,-9]& [15,23] \\[1mm]
 [0,1]  & [6,10]  & [-1,1]  & [-1,3] & [-5,1] & [1,15]  & [-3,-1] \\[1mm]
 [0,3]  & [-20,-9]& [12,77] &[-6,30] & [0,3]  & [-18,1] & [0,1]   \\[1mm]
 [-4,1] & [-1,1]  & [-3,1]  & [3,5]  & [5,9]  & [1,2]   & [1,4]   \\[1mm]
 [0,3]  & [0,6]   & [0,20]  & [-1,5] & [8,14] & [-6,1]  & [10,17] \\[1mm]
[-7,-2] & [1,2]   & [7,14]  & [-3,1] & [0,2]  & [3,5]   & [-2,1]  \\[1mm]
 [-1,5] & [-3,2]  & [0,8]   & [1,11] & [-5,10]& [2,7]   & [6,82]
\end{array}\right)\,\mbox{\normalsize$x$} =
\left(\begin{array}{c}
[-10,95]\\[1mm] [35,14]\\[1mm] [-6,2]\\[1mm] [30,7]\\[1mm] [4,95]\\[1mm]
[-6,46] \\[1mm] [-2,65]
\end{array}\right)
\end{displaymath}}
  
\noindent
from the work \cite{Shary-95}, the algorithm \texttt{ARMSplit} diverges, but its 
formal solution can be successfully found with the use of the subdifferential Newton 
method (after 9 iterations and for the damping factor $\tau = 1$). 
  
When the (7,7)-element of the interval matrix is narrowed, the convergence of 
the algorithm \texttt{ARMSplit} to the formal solution appears, but it is very slow. 
For example, with $\mbf{a}_{77} = [8, 82]$, the algorithm requires about a hundred 
iterations to get 5 correct significant digits of the answer. 
\end{example}

Summarizing the last example, we can say that it demonstrates the advantage of 
the subdifferential Newton method over stationary iterative methods not only 
in terms of efficiency, but also in terms of the applicability scope.


  

\begin{thebibliography}{20}
  
\bibitem{BarNud} 
\textsc{Barth W., Nuding E.}  Optimale  L\"{o}sung  von Intervallgleichungssystemen. 
\textit{Computing}, {\bf 12} (1974), pp.~117--125. 

\bibitem{Berti} 
\textsc{Berti S.} The solution of an interval equation. \textit{Mathematica}, {\bf 11} 
(34) (1969), No.~2, pp.~189--194. 
  
\bibitem{Birkhoff}
\textsc{Birkhoff G.} \textit{Lattice Theory. Revised Edition}, American Mathematical 
Society, New York, 1948. 
   
\bibitem{Collatz}
\textsc{Collatz L.} \textit{Functional Analysis and Numerical Mathematics}, 
Academic Press, New York and London, 1966. 
  
\bibitem{Dieudonne}
\textsc{Dieudonn\'{e} J.} \textit{Foundations of Modern Analysis}. Academic Press, 
New York and London, 1960. 
  
\bibitem{DimiMarkPop}
\textsc{Dimitrova N.\,S., Markov S.\,M., Popova E.\,D.} Extended interval arithmetics: new 
results and applications. In: ``{\it Topics in Validated Computation}'', Atanassova~L. 
and Herzberger~J., eds., North-Holland, Amsterdam, 1994, .
  
\bibitem{Engelking} 
\textsc{Engelking R.} \textit{Dimension Theory}, North-Holland, Amsterdam and New York, 
1978. 
  
\bibitem{Gantmacher} 
\textsc{Gantmacher F.\,R.} \textit{The Theory of Matrices, Vol.~1}, AMS Chelsea 
Publishing Company, Providence, Rhode Island, 1959. 
  
\bibitem{GardTre-80}
\textsc{Garde\~nes E., Trepat A.} Fundamentals of SIGLA, an interval computing system 
over the completed set of intervals. {\it Computing}, {\bf 24}, 1980, 161--179

\bibitem{GardTre-82}
\textsc{Garde\~nes E., Trepat A., Mielgo H.} Present perspective of the SIGLA interval 
system. {\it Freiburger Intervall-Berichte}, No.~82/9, 1982, 1--65.

\bibitem{Gay} 
\textsc{Gay D.\,M.} Solving interval linear equations. \textit{SIAM J. Numer. Analysis}, 
\textbf{18}, 1982, 858--870. 
  
\bibitem{KantoAki}
\textsc{Kantorovich L.\,V., Akilov G.\,P.} \textit{Functional Analysis. Second edition}, 
Pergamon Press, Oxford etc., 1982. 
  
\bibitem{Karlyuk} 
\textsc{Karlyuk A.\,Yu.} Numerical method for finding algebraic solution to ISLAE based 
on triangular splitting. \textit{Computational Technologies}, vol.~4 (1999), No.~4, 
pp.~14--23. 
  
\bibitem{Kaucher-73}
\textsc{Kaucher E.} \"{U}ber metrische und algebraische Eigenschaften einiger beim 
numerischen Rechnen auftretender R\"{a}ume. Dr. Naturwissen. Dissertation. -- 
Karlsruhe: Universit\"{a}t  Karlsruhe, 1973. 
  
\bibitem{Kaucher-77}
\textsc{Kaucher E.} Algebraische Erweiterungen der Intervallrechnung unter Erhaltung 
Ordnungs- und Verbandsstrukturen // \textit{Grundlagen der Computer-Arithmetic} / 
Albrecht R., Kulisch U., eds. -- Wien: Springer, 1977. -- P.~65--79. -- (Computing 
Supplementum; 1) 
  
\bibitem{Kaucher-80} 
\textsc{Kaucher E.} Interval analysis in the extended interval space $\mbb{IR}$. 
{\it Computing Supplement}, {\bf 2}, 1980, 33--49.

\bibitem{Kearfott} 
\textsc{Kearfott R.\,B.} \textit{Rigorous Global Search: Continuous Problems}. 
Kluwer, Dordrecht, 1996. 
  
\bibitem{INotation} 
\textsc{Kearfott B., Nakao M., Neumaier A., Rump S., Shary S.\,P., van Hentenryck P.} 
Standardized notation in interval analysis \textit{Computational Technologies}, 
vol.~15 (2010), No.~1, pp.~7--13. 
  
\bibitem{Krasnosel} 
\textsc{Krasnosel'skii M.\,A.} \textit{Positive Solutions of Operator Equations}, 
Wolters-Noordhoff (1964) 
  
\bibitem{Kupriyanova} 
\textsc{Kupriyanova L.} Inner estimation of the united solution set of interval linear 
algebraic system. \textit{Reliable Computing}, vol.~1 (1995), No.~1, pp.~15--31. 
  
\bibitem{Lakeyev-95} 
\textsc{Lakeyev A.\,V.} Linear algebraic equation in Kaucher arithmetic. 
In ``{\it Reliable Computing, 1995, Supplement (Extended Abstracts of APIC'95: 
International Workshop on Applications of Interval Computations, El Paso, TX, 
Febr. 23--25, 1995)}'', El Paso, 1995, 130--133. 
  
\bibitem{Lakeyev-96} 
\textsc{Lakeyev A.\,V.} On the computational complexity of the solution of linear 
systems with moduli. \textit{Reliable Computing}, {\bf 2} (1996), No.~2, pp.~125--131. 
  
\bibitem{Lakeyev-98}
\textsc{Lakeyev A.\,V.} On existence and uniqueness of solutions of linear 
algebraic equations in Kaucher's interval arithmetic // \textit{Developments 
in Reliable Computing} / Csendes~T., ed. -- Dordrecht: Kluwer Academic 
Publishers, 1998. -- P.~53--65. 
  
\bibitem{LemarHirUrr} 
\textsc{Lemar\'{e}chal C., Hiriart-Urruty J.-B.} \textit{Fundamentals of Convex 
Analysis}, Springer-Verlag, Berlin-Heidelberg, 2001. 
  
\bibitem{Markov} 
\textsc{Markov S.} An iterative method for algebraic solution to interval equations. 
\textit{Applied Numerical Mathematics} \textbf{30} (1999), Issues 2--3, pp.~225--239. 
  
\bibitem{MarPopUll}
\textsc{Markov S., Popova E., Ullrich Ch.} On the solution of linear algebraic 
equations involving interval coefficients. In \textit{``Iterative methods in linear 
algebra II''}, Margenov~S., Vassilevski~P., eds. IMACS Series on Computational 
and Applied Mathematics \textbf{3}, 1996, pp.~216--225. 
  
\bibitem{MooreBakerCloud}
\textsc{Moore R.\,E., Kearfott R.\,B., Cloud M.\,J.} \textit{Introduction to Interval 
Analysis}, Philadelphia, SIAM, 2009. 
  
\bibitem{Neumaier} 
\textsc{Neumaier A.} \textit{Interval Methods for Systems of Equations}. 
Cambridge University Press, Cambridge, 1990. 
  
\bibitem{Nickel} 
\textsc{Nickel K.} Die Aufl\"{o}sbarkeit linearer  Kreisscheiben- und 
Intervall-Gleichungssystemen. \textit{Linear Algebra and its Applications}, 
\textbf{44} (1982), pp.~19--40. 
  
\bibitem{OrtRhein} 
\textsc{Ortega J.\,P., Rheinboldt W.\,C.} \textit{Iterative Solution of Nonlinear 
Equations in Several Variables}. SIAM, Philadelphia, 2000. 
  
\bibitem{Popova} 
\textsc{Popova E.\,D.} Algebraic solutions to a class of interval equations. 
\textit{Journal of Universal Computer Science}, \textbf{4} (1998), No.~1, pp.~48--67. 
  
\bibitem{RatschSau} 
\textsc{Ratschek H., Sauer W.} Linear interval equations. \textit{Computing}, 
\textbf{28} (1982), pp.~105--115. 
  
\bibitem{Rockafellar} 
\textsc{Rockafellar R.\,T.} \textit{Convex Analysis}. Princeton University Press, 
Princeton, 1970. 
  
\bibitem{SaiGarJor}
\textsc{Sainz M.A., Garde\~{n}es E., Jorba L.} Formal solution to systems of interval 
linear and nonlinear equations. \textit{Reliable Computing} \textbf{8} (2002), No.~3, 
pp.~189--211. 
  
\bibitem{ModalIntAnal} 
\textsc{Sainz M.A., Armengol J., Calm R., Herrero P., Jorba L., Vehi J.} \textit{Modal 
Interval Analysis}. Springer, Cham, Switzerland, 2014. 
  
\bibitem{Shary-95} 
\textsc{Shary S.\,P.} Algebraic approach to the interval linear static identification, 
tolerance and control problems, or One more application of Kaucher arithmetic. 
\textit{Reliable Computing} \textbf{2} (1996), pp.~3--33. 
  
\bibitem{Shary-96} 
\textsc{Shary S.\,P.} Algebraic solutions to interval linear equations and their 
applications. In \textit{``Numerical Methods and Error Bounds''}, Alefeld~G. 
and Herzberger~J., eds. Berlin, Akademie Verlag, 1996, 224--233. 
  
\bibitem{Shary-98} 
\textsc{Shary S.\,P.} Algebraic approach in the ``outer problem'' for interval linear 
equations. \textit{Reliable Computing}, vol.~3 (1997), No.~2, pp.~103--135. 
  
\bibitem{SharySurvey} 
\textsc{Shary S.\,P.} A new technique in systems analysis under interval uncertainty 
and ambiguity. \textit{Reliable Computing}, \textbf{8} (2002), No.~5, pp.~321--418. 
  
\bibitem{SharyBook}
\textsc{Shary S.\,P.} \textit{Finite-Dimensional Interval Analysis}. Institute 
of Computational Technologies, Novosibirsk, 2018. \  (in Russian) \ 
URL: \url{http://www.nsc.ru/interval/Library/InteBooks/SharyBook.pdf} 
  
\bibitem{SharyCodes}
\textsc{Shary S.\,P.} Computer codes for finding formal solutions  to interval linear 
systems of equations. URL: \url{http://www.nsc.ru/interval/shary/Codes/progr.html}
  
\bibitem{Schaefer} 
\textsc{Schaefer H.\,H.} \textit{Topological Vector Spaces. Graduate Texts 
in Mathematics 3}. Springer-Verlag, New York, 1999. 
  
\bibitem{Varga} 
\textsc{Varga R.\,S.} \textit{Matrix Iterative Analysis}. Prentice-Hall, 
Englewood Cliffs, N.J., 1962. 
  
\bibitem{Vulikh} 
\textsc{Vulikh B.\,Z.} \textit{Introduction to the Theory of Partially Ordered Spaces}, 
Wolters-Noordhoff (1967) 
  
\bibitem{Zyuzin-87} 
\textsc{Zyuzin V.\,S.} On a method of finding two-sided interval approximations for 
the solution of a system of linear interval equations, in: \textit{Differential Equations 
and Theory of Functions}. -- Saratov, Publishing House of Saratov State University, 
1987, pp.~28--32. 
  
\bibitem{Zyuzin-89} 
\textsc{Zyuzin V.\,S.} Iterative method for solving a system of first order algebraic 
segment equations, in: \textit{Differential Equations and Theory of Functions}, 
issue 8, Saratov, Publishing House of Saratov State University, 1989, pp.~72--82. 
  
\bibitem{Zyuzin-90} 
\textsc{Zyuzin V.\,S.} An interval arithmetic solution of a system of interval algebraic 
equations of the first order // \textit{International Symposium on Computer Arithmetic, 
Scientific Computation and Mathematical Modelling (SCAN-90), Albena, Bulgaria, September 
23--28, 1990}. -- Sofia: Bulgarian Academy of Sciences, 1990. -- P.~160--162. 
  
\end{thebibliography}
\end{document}